\newtheorem{theorem}{Theorem}[section]
\newtheorem{lemma}[theorem]{Lemma}
\newtheorem{corollary}[theorem]{Corollary}
\newtheorem{proposition}[theorem]{Proposition}
\theoremstyle{definition}
\newtheorem{example}[theorem]{Example}
\newtheorem{remarks}[theorem]{\bf Remarks}
\newtheorem{claim}{\textsc{Claim}}
\newcommand{\N}{\mathbb N}
\newcommand{\Z}{\mathbb Z}
\newcommand{\Q}{\mathbb Q}
\providecommand{\eps}{\varepsilon}
\providecommand{\LLc}{\mathscr{L}}
\providecommand{\LLs}{\mathsf{L}}
\providecommand{\HHb}{H}
\providecommand{\NNb}{{\mathbb{N}_0}}
\providecommand{\NNp}{\mathbb{N}}
\providecommand{\PPc}{\mathscr{P}}
\providecommand{\RRb}{\mathbb{R}}
\providecommand{\UUc}{\mathscr{U}}
\providecommand{\ZZb}{\mathbb{Z}}
\providecommand\llb{\llbracket}
\providecommand\rrb{\rrbracket}
\providecommand\maxrho{\rho}
\newcommand\pprime{{\prime\prime}}
\renewcommand{\emptyset}{\varnothing}
\newcommand{\red}{{\text{\rm red}}}
\newcommand{\fin}{\text{\rm fin}}
\newcommand{\BF}{\text{\rm BF}}
\DeclareMathOperator{\spec}{spec}
\DeclareMathOperator{\End}{End} \DeclareMathOperator{\Min}{Min}
\newcommand{\DP}{\negthinspace : \negthinspace}
\renewcommand{\t}{\mid}
\numberwithin{equation}{section}
\DeclareMathSymbol{\widehatsym}{\mathord}{largesymbols}{"62}
\providecommand{\eps}{\varepsilon}
\providecommand{\LLc}{\mathscr{L}}
\providecommand{\LLs}{\mathsf{L}}
\providecommand{\NNb}{\mathbb{N}}
\providecommand{\PPc}{\mathcal{P}}
\providecommand{\RRb}{\mathbb{R}}
\providecommand{\UUc}{\mathscr{U}}
\providecommand{\ZZb}{\mathbb{Z}}
\providecommand\llb{\llbracket}
\providecommand\rrb{\rrbracket}
\providecommand\maxrho{\rho}
\newcommand{\fixed}[2][1]{%
  \begingroup
  \spaceskip=#1\fontdimen2\font minus \fontdimen4\font
  \xspaceskip=0pt\relax
  #2%
  \endgroup
}
\newcommand{\myunderbar}[1]{\stackunder[1.2pt]{$#1$}{\rule{1ex}{.12ex}}}
\begin{document}

\title[Arithmetic of commutative semigroups]{Arithmetic of commutative semigroups \\ with a focus on  semigroups  of ideals and modules}

\author{Yushuang Fan}
\address{Mathematical College, China University of Geosciences | Haidian District, Beijing, China}
\email{fys@cugb.edu.cn}
\urladdr{http://yushuang-fan.weebly.com/}

\author{Alfred Geroldinger}
\address{Institute for Mathematics and Scientific Computing, University of Graz | Heinrichstr. 36, 8010 Graz, Austria}
\email{alfred.geroldinger@uni-graz.at}
\urladdr{http://imsc.uni-graz.at/geroldinger}

\author{Florian Kainrath}
\address{Institute for Mathematics and Scientific Computing, University of Graz | Heinrichstr. 36, 8010 Graz, Austria}
\email{florian.kainrath@uni-graz.at}

\author{Salvatore Tringali}
\address{Institute for Mathematics and Scientific Computing, University of Graz | Heinrichstr. 36, 8010 Graz, Austria}
\email{salvatore.tringali@uni-graz.at}
\urladdr{http://imsc.uni-graz.at/tringali/}

\thanks{This work was supported by the Austrian Science Fund FWF, Project Number M 1900-N39,  the National Natural Science Foundation of China (NSFC), Project No. 11401542, and  the China Scholarship Council (CSC)}

\keywords{Factorizations, sets of lengths, direct-sum decomposition, Krull monoids}

\subjclass[2010]{11P70, 13A05, 13F05, 16D70, 20M12, 20M13}

\begin{abstract}
Let $H$ be a commutative semigroup with unit element such that every non-unit can be written as a finite product of irreducible elements (atoms). For every $k \in \mathbb N$, let $\mathscr U_k (H)$ denote the set of all $\ell \in \mathbb N$ with the property that there are atoms $u_1, \ldots, u_k, v_1, \ldots,  v_{\ell}$ such that $u_1 \cdot \ldots \cdot u_k = v_1 \cdot \ldots \cdot v_{\ell}$ (thus, $\mathscr U_k (H)$ is the union of all sets of lengths containing $k$).

The Structure Theorem for Unions  states that, for all sufficiently large $k$, the sets $\mathscr U_k (H)$ are almost arithmetical progressions with the same difference and  global bound. We present a new approach to this result in the framework of arithmetic combinatorics, by deriving, for suitably defined families  of subsets of the non-negative integers, a characterization of when the Structure Theorem  holds.

This abstract approach allows us to verify, for the first time, the Structure Theorem for a variety of possibly non-cancellative  semigroups, including  semigroups of (not necessarily invertible) ideals and semigroups of modules. Furthermore, we provide the very first example of a semigroup (actually, a locally tame Krull monoid) that does not satisfy the Structure Theorem.
\end{abstract}

\maketitle
\thispagestyle{empty}

\medskip
\section{Introduction} \label{1}
\medskip

The focus of factorization theory has been so far  on the arithmetic of noetherian domains, Krull domains, semigroups of invertible ideals, and semigroups of modules, where the involved semigroups (of ring elements, ideals, or modules), both in the commutative and in the non-commutative setting, are cancellative. The best investigated means for describing the arithmetic of non-factorial semigroups are sets of lengths and invariants derived from them, such as sets of distances, elasticities, and unions of sets of lengths. To fix notation, we recall some definitions that lie at the center of our interest.

Let $H$ be a commutative semigroup and let $k$ be a positive integer. Then we use $\UUc_k (H)$ for the set of all $\ell \in \N$ with the property that there are atoms $u_1, \ldots, u_k, v_1, \ldots,  v_{\ell} \in H$ such that $u_1 \cdot \ldots \cdot u_k = v_1 \cdot \ldots \cdot v_{\ell}$. Moreover, $\rho_k (H) = \sup \UUc_k (H) \in \N_0 \cup \{\infty\}$ denotes the $k$-th elasticity of $H$.

Unions of sets of lengths were first studied by Chapman and Smith in the setting of Dedekind domains $R$ with finite class group  (\cite{Ch-Sm93b, Ch-Sm98a}).   Their focus was on the invariants  $|\UUc_k (R)|$ and $\rho_k (R)$.
The Structure Theorem for Unions states that, for all sufficiently large $k$, the sets $\mathscr{U}_k (H)$ are almost arithmetical progressions with the same difference and  global bound. It was first proved for commutative cancellative semigroups with finite set of distances satisfying a natural growth condition for the $\rho_k (H)$ (\cite[Theorem 4.2]{Ga-Ge09b}), and later generalized to possibly non-commutative cancellative semigroups (\cite[Theorem 2.6]{Ge16c}). These  abstract results served as crucial tools for studying unions of sets of lengths in settings ranging from numerical monoids to Mori domains, including semigroups of modules and maximal orders in central simple algebras  (see, for example, \cite{Ba-Ge14b, B-C-H-P08, Sm13a, Ge-Ka-Re15a}). In special cases the unions $\mathscr{U}_k (H)$ are not only almost arithmetical progressions (as predicted by the Structure Theorem), but even arithmetical progressions. This holds true, among others, for Krull monoids with the property that
every class contains a prime divisor (\cite[Theorem 3.1.3]{Ge09a}). In this case the study of the elasticities $\rho_k (H)$ has received much attention in recent literature (\cite{Ge-Gr-Yu15, Fa-Zh16a}).

The goal of the present paper is twofold. First, we want to gain a better understanding of when the Structure Theorem for Unions holds true, and second, we want to do this in a setting that allows us to handle certain semigroups of ideals and semigroups of modules which are not necessarily cancellative.

In Section \ref{2}, we study some suitably defined families of sets of non-negative integers (systems of sets of lengths of semigroups are such families), and for these families we derive a characterization of when the Structure Theorem for Unions holds true (Theorem \ref{th:structure_theorem}). In Section \ref{3}, we first consider abstract monoids, defined as commutative associative semigroups with a unit element for which an equation of the form $au=a$ implies that $u$ is invertible. Our main results in this setting are Theorems \ref{3.4} and  \ref{3.6}. In Subsections \ref{subsec:3.2} and \ref{subsec:3.3}, we apply the above results   to semigroups of ideals in one-dimensional noetherian domains (Corollary \ref{3.8}) and to semigroups of modules of finite representation type (Corollary \ref{3.9}). In Section \ref{4}, we provide the very first example of a semigroup whose system of sets of lengths does not satisfy the Structure Theorem for Unions. Our example is, actually, a cancellative locally tame Krull monoid with finite set of distances (Theorem \ref{4.2}).

\medskip
\section{Directed families of subsets of $\N_0$} \label{2}
\medskip

For $a,b \in \RRb \cup \{\pm \infty\}$ we let  $\llb a, b \rrb = \{x \in \Z \colon a \le x \le b \}$ be the discrete interval between $a$ and $b$.  Given a set $X$, we write $\PPc  (X)$ for the power set of $X$. We denote by $\N$ the set of positive integers and take $\N_0 = \N \cup \{0\}$. By convention, we assume $\inf \emptyset = \infty$ and $\sup \emptyset = 0$.

Let $G$ be an additive abelian group, $n \in \N$,  and $A, B \subseteq G$ subsets. Then $A+B = \{a+b \colon a \in A, b \in B\}$ is the sumset of $A$ and $B$, $nA = A+ \ldots + A$ the $n$-fold sumset of $A$, and $n \cdot A = \{na \colon a \in A\}$  the dilation of $A$ by $n$.

Let $L \subseteq \N_0$ be a subset. We denote by  $L^+ = L \cap \N$ the set of positive elements of $L$. The  \textit{elasticity} $\rho (L)$ of $L$ is then defined as $\rho (L)=\sup L^+/\min L^+$ if $L^+ \ne \emptyset$ and $\rho (L)=1$ if $L^+ = \emptyset$.  A positive integer $d \in \N$ is called a \textit{distance} of $L$ if there are $k, \ell \in L$ with $\ell - k = d$ and the interval $\llb k,\ell \rrb$ contains no further elements of $L$. We let $\Delta (L)$ denote the set of  distances of $L$. Thus $L$ is an arithmetical progression (AP for short) if $L \ne \emptyset$ and $|\Delta (L)|\le 1$.
Given $d \in \NNp$ and $M \in \NNb$, we call $L$
 an \textit{almost arithmetical progression} (AAP for short) with difference $d$ and bound $M$ if
\begin{itemize}
\item $L \subseteq y + d \cdot \ZZb \ $ for some $y \in \ZZb$, and
\item $L \cap \llb \inf L + M, \sup L - M \rrb$ is an AP with difference $d$.
\end{itemize}
Clearly, every AAP with difference $d$ and bound $M=0$ is an AP with difference $d$, and conversely. By definition, AAPs are non-empty and they may be either finite or infinite.

\smallskip
Let $\LLc$ be a family of  subsets of $\NNb$. For each $k \in \N_0$, we define
\[
\LLc_k = \{L \in \LLc: k \in L\} \quad \text{ and } \quad  \UUc_k(\LLc) = \bigcup_{L \in \LLc_k} L,
\]
and we set $\lambda_k(\LLc) = \inf \UUc_k(\LLc)$ and $\maxrho_k(\LLc) = \sup \UUc_k(\LLc)$. We call
\begin{itemize}
\item $\rho(\LLc) = \sup\{\rho(L): L \in \LLc\}$ the \textit{elasticity} of  $\LLc$,
\item $\Delta(\LLc) = \bigcup_{L \in \LLc} \Delta(L)$ the \textit{set of distances} of $\LLc$, and 
\item the elements of $\Delta(\LLc)$ the \textit{distances} of $\LLc$.
\end{itemize}
We say that the family $\LLc$
\begin{enumerate}[label={\rm (\roman{*})}]
\item\label{item:ST(i)} satisfies the \textit{Structure Theorem for Unions} if there are $d \in \NNp$ and $M \in \NNb$ such that $\UUc_k (\LLc)$ is an AAP with difference $d$ and bound $M$ for all sufficiently large $k \in \NNp$;
\item  is    \textit{directed} if $1 \in L$ for some $L \in \LLc$ and, for all $L_1, L_2 \in \LLc$, there is $L^\prime \in \LLc$ with $L_1 + L_2 \subseteq L^\prime$.
\end{enumerate}
In this section we study directed families of subsets of $\N_0$ and characterize when they satisfy the Structure Theorem for Unions (Theorem \ref{th:structure_theorem}).
Such families  play a prominent role in factorization theory, and here is the main example we have in mind.
\begin{example}
\label{exa:system_of_sets_of_lengths}
Let $\HHb$ be a multiplicative semigroup, $H^{\times}$ the group of invertible elements of $H$, and $A \subseteq H \setminus H^{\times}$ a non-empty subset such that every  $a \in H \setminus H^{\times}$ can be written as a finite product of elements from $A$. If $a \in H \setminus H^{\times}$, then
\[
\mathsf L (a) = \{ k \in \N \colon a = u_1 \cdot \ldots \cdot u_k \ \text{with} \ u_1, \ldots, u_k \in A \} \subseteq \N
\]
is the \textit{set of lengths} of $a$ (relative to $A$). For convenience we set $\mathsf L (a) = \{0\}$ for all $a \in H^{\times}$. Then $\LLc (H) = \{\mathsf L (a) \colon a \in H \}$ is the \textit{system of sets of lengths} of $H$ (relative to $A$). If $a \in A$, then $1 \in \mathsf L (a)$, and if $a_1, a_2 \in H$, then $\mathsf L (a_1)+\mathsf L (a_2) \subseteq \mathsf L (a_1a_2)$. Thus $\LLc (H)$ is a  directed family of subsets of $\N_0$.

The most natural choice of a generating set $A$ is the set of atoms of the semigroup and we study this situation in detail in Section \ref{3}. Also other generating sets have been, however,  studied in the literature  (e.g., \cite{C-D-H-K10}), and the main results of this section apply to them too. Clearly, every family $\LLc$ of subsets of $\N_0$, which is closed under set addition and  contains some $L$ with $1 \in L$, is directed (see Subsection \ref{subsec:3.4}). Yet, systems of sets of lengths are not, in general,  closed under set addition (\cite{Ge-Sc16b}).
\end{example}

With the above in mind, we present the main result of this section.

\begin{theorem}
\label{th:structure_theorem}
Let $\LLc$ be a  directed family of subsets of $\N_0$ such that $\Delta(\LLc)$ is finite non-empty and set $\delta = \min \Delta(\LLc)$.
\begin{enumerate}[label={\rm (\arabic{*})}]
\item\label{it:main(1)} Let $l \in \NNb$ such that $\{l, l + \delta\} \subseteq L$ for some $L \in \LLc$. Then $q = \frac{1}{\delta} \max \Delta(\LLc)$  is a non-negative integer and the following statements are equivalent{\rm \,:}
\begin{enumerate}[label={\rm (\alph{*})}, leftmargin=2.4\parindent]
\item\label{it:th:structure_theorem(i)} $\LLc$ satisfies the  Structure Theorem for Unions.
\item\label{it:th:structure_theorem(ii)} There exists $M \in \NNb$ such that $\UUc_k (\LLc) \cap \llb \maxrho_{k - lq}(\LLc) + lq, \maxrho_k (\LLc) - M \rrb$ is either empty or an AP with difference $\delta$ for all sufficiently large $k$.
\end{enumerate}
\smallskip
\item\label{it:main(2)} Assume that $\rho (\LLc) < \infty$ and there exists $L^\ast \in \LLc$ such that $\rho(L^\ast) = \rho (\LLc)$ and $L^\ast$ is an AP with difference $\delta$. Then $\UUc_k (\LLc)$ is an AP with dif\-fer\-ence $\delta$ for all sufficiently large $k$.
\end{enumerate}
\end{theorem}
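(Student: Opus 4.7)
My plan has three stages: a divisibility lemma on $\Delta(\LLc)$, then Part~(1) via a sumset-shifting argument, and finally Part~(2) by refining Part~(1) using the distinguished set $L^\ast$. The engine throughout is directedness, which yields the sumset inclusion $\UUc_a(\LLc) + \UUc_b(\LLc) \subseteq \UUc_{a + b}(\LLc)$ (pick $L_i \in \LLc_{k_i}$ with $c_i \in L_i$, combine via directedness to get $L^\prime \supseteq L_1 + L_2$ with $k_1 + k_2 \in L^\prime$), and hence the super additivity $\rho_{a + b}(\LLc) \ge \rho_a(\LLc) + \rho_b(\LLc)$.

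I first establish $\Delta(\LLc) \subseteq \delta \ZZb$, which in particular yields $q = \max \Delta(\LLc)/\delta \in \NNb$ and forces every $L \in \LLc$ with at least two elements into a single $\delta$-coset, so $\UUc_k(\LLc) \subseteq k + \delta \ZZb$. If some $d \in \Delta(\LLc)$ is not divisible by $\delta$, realized between consecutive elements $k_0 < k_0 + d$ of some $L_0 \in \LLc$, fix $L \in \LLc$ with $\{l, l+\delta\} \subseteq L$ and form $n L + L_0 \subseteq L^\prime_n \in \LLc$ by iterated directedness. For $n$ large, the $n$-fold sumset $n L$ contains $\{nl, nl + \delta, \ldots, nl + n\delta\}$, so $L^\prime_n$ contains this AP together with its translate by $d$; the element $nl + k_0 + d$ lies strictly between two consecutive $\delta$-spaced elements of $nl + k_0 + \delta \ZZb$ inside $L^\prime_n$, forcing a positive distance in $\Delta(L^\prime_n) \subseteq \Delta(\LLc)$ strictly below $\delta$, a contradiction.

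For Part~(1), the direction (a) $\Rightarrow$ (b) is routine: any AAP for $\UUc_k(\LLc)$ must have difference exactly $\delta$ by the divisibility step, and super additivity gives $\rho_{k - lq}(\LLc) + lq \le \rho_k(\LLc)$, placing $\llb \rho_{k - lq} + lq, \rho_k - M \rrb$ inside the AP core of the AAP for $k$ large. For (b) $\Rightarrow$ (a), I apply (b) at the shifts $k, k - lq, k - 2lq, \ldots$ and translate the corresponding AP regions into $\UUc_k(\LLc)$ by $0, lq, 2lq, \ldots$ via the sumset inclusion, thereby tiling $\UUc_k(\LLc)$ from the top downward with AP blocks of difference $\delta$. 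The \emph{main obstacle} is ensuring that consecutive tiles actually overlap: I would handle this by upgrading the point $jlq \in \UUc_{jlq}(\LLc)$ to a whole AP segment of difference $\delta$ whose length exceeds the bound $M$ from~(b), using that the $n$-fold sumsets $n L$ (with $\{l, l+\delta\} \subseteq L$) are contained in $\LLc$-elements and provide such a segment for $n$ sufficiently large. Sumset-shifting by this richer block of $\UUc_{jlq}(\LLc)$, rather than the single point $jlq$, enforces overlap and yields the AAP structure with difference $\delta$ and a possibly enlarged bound.

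For Part~(2), the hypothesis provides the especially rigid tool $L^\ast = \{\alpha, \alpha + \delta, \ldots, \beta\}$ with $\beta/\alpha = \rho(\LLc)$: the $n$-fold sumset $n L^\ast = \{n\alpha, n\alpha + \delta, \ldots, n\beta\}$ is itself an AP of difference $\delta$ contained in some $L_n \in \LLc$, which combined with the elasticity bounds $\lambda_k \ge k / \rho(\LLc)$ and $\rho_k \le k \rho(\LLc)$ pins $\rho_{n\alpha}(\LLc) = n\beta$ and $\lambda_{n\beta}(\LLc) = n\alpha$. Starting from Part~(1)'s AAP conclusion with difference $\delta$ and bound $M$, I would upgrade to an AP using the translation inclusion $(n\alpha + j\delta) + \UUc_{k - n\alpha - j\delta}(\LLc) \subseteq \UUc_k(\LLc)$: for any candidate $c \in (k + \delta \ZZb) \cap \llb \lambda_k, \rho_k \rrb$, choose $n$ and $j$ so that $c - n\alpha - j\delta$ lies in the AP core of $\UUc_{k - n\alpha - j\delta}(\LLc)$, which forces $c \in \UUc_k(\LLc)$. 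The technical hurdle I expect to dominate this step is covering the endpoint regions of $\UUc_k(\LLc)$ of size $\le M$ near $\lambda_k$ and $\rho_k$, where no such shift suffices directly; the extremal sharpness of $\rho_{n\alpha} = n\beta$ and $\lambda_{n\beta} = n\alpha$, combined with the $\delta$-coset constraint from the divisibility step, allows this to be resolved and the bound $M$ eliminated.
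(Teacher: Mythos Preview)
Your outline has the right instincts (directedness $\Rightarrow$ sumset inclusion, $\delta$-divisibility, shifting by AP blocks), but two load-bearing ideas from the actual proof are missing, and without them the argument does not close.

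\textbf{Part (1), (b) $\Rightarrow$ (a).} Your iterative tiling only produces AP structure on the \emph{upper} half $\llb k,\rho_k-M'\rrb$. You never explain how to cover $\llb \lambda_k+M',k\rrb$; every translate $jlq+\UUc_{k-jlq}$ has minimum $\ge \lambda_k$ but there is no reason it comes close to $\lambda_k$. The paper handles this by a separate reduction lemma: using the reflection $h\in\UUc_k\Leftrightarrow k\in\UUc_h$, one shows that ``$\UUc_k\cap\llb k,\rho_k-M\rrb$ is an AP for large $k$'' already implies the full AAP statement. This reflection step is essential and not mentioned in your plan. Even on the upper half, your overlap mechanism is shakier than needed: you want AP segments in $\UUc_{jlq}$ of length exceeding $M$, which forces $j$ large, but then $k-jlq$ may drop below the threshold where (b) applies. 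The paper avoids iteration entirely by a single observation: since $\max\Delta(\UUc_{k-lq})\le\max\Delta=\delta q$ and $\UUc^*=lq+\delta\cdot\llb 0,q\rrb\subseteq\UUc_{lq}$ is an AP of span $\delta q$, the sumset $\UUc^*+\UUc_{k-lq}$ is \emph{already} a full AP with difference $\delta$ on $\llb k,\rho_{k-lq}+lq+\delta q\rrb$. One application of (b) then extends this to $\rho_k-M$.

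\textbf{Part (2).} You correctly flag the endpoint regions near $\rho_k$ as the crux, but ``the extremal sharpness of $\rho_{n\alpha}=n\beta$ \dots\ allows this to be resolved'' is not an argument. The paper's missing ingredient is that $\rho_k$ is \emph{eventually exactly linear} in each residue class modulo $n=\min L^\ast$: the differences $\rho_{n\kappa+j}-n\kappa\rho$ are integers bounded above by $j\rho$, so they attain a maximum at some $\kappa=m_j$, and then superadditivity plus $\rho_{in}=in\rho$ force $\rho_{(m_j+i)n+j}=\rho_{m_jn+j}+in\rho$ for all $i\ge 0$. This is what guarantees $\sup\UUc_k=\sup(\UUc_{nr\rho}+\UUc_{k-nr\rho})$ for large $k$; combined with the fact that $\UUc_{nr\rho}$ is itself a long AP (built from $rL^\ast$ and $r\rho L^\ast$), the sumset fills $\llb k,\rho_k\rrb$ completely, and a reflection lemma analogous to the one above upgrades to the full AP. Without the eventual linearity of $\rho_k$, your translation scheme cannot reach the last few points below $\rho_k$.
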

Some comments about Condition \ref{it:main(1)}\ref{it:th:structure_theorem(ii)} of Theorem \ref{th:structure_theorem} are in order. Suppose first that $\rho_{k_0}(\LLc)=\infty$ for some $k_0 \in \N$. Then it is found that $\rho_k(\LLc) = \infty$ for all $k \ge k_0$ (see Lemma \ref{prop:basic(1)}\ref{it:prop:basic(1)(iii)}), and hence Condition \ref{it:main(1)}\ref{it:th:structure_theorem(ii)} in the theorem holds, because  $\llb \maxrho_{k - lq}(\LLc) + lq, \maxrho_k (\LLc) - M \rrb$ is empty.
Assume, on the other hand, that $\rho_k (\LLc) < \infty$ and $\rho_{k+1}(\LLc) - \rho_k (\LLc) \le K$ for all $k \in \N$, where $K \in \NNp$ does not depend on $k$. Then, in the notation of Theorem \ref{th:structure_theorem}, we have
$$
\maxrho_k (\LLc)- \maxrho_{k - lq} (\LLc)= \sum_{i=k-lq}^{k-1} (\maxrho_{i+1}(\LLc) - \maxrho_i (\LLc)) \le lqK,
$$
with the result that Condition \ref{it:main(1)}\ref{it:th:structure_theorem(ii)} is still verified (now with $M = lqK+1$).

\begin{corollary}
\label{cor:generalization}
Let $\LLc$ be a  directed family of subsets of $\N_0$ such that $\Delta(\LLc)$ is finite non-empty and the  Structure Theorem for Unions holds.
\begin{enumerate} [label={\rm (\arabic{*})}]
\item\label{it:cor:some_properties_of_ASTU_families(i)} If $\maxrho_k (\LLc) < \infty$ for all $k \in \NNp$, then there is an $M \in \N_0$ such that $\UUc_k (\LLc)$ is an AAP with difference $\min \Delta ( \LLc)$ and bound $M$ for every $k \in \N$.
\item\label{it:cor:some_properties_of_ASTU_families(ii)} We have that
\begin{equation}
\label{equ:limit_of_size}
\lim_{k \to \infty} \frac{|\UUc_k(\LLc)|}{k} = \sup_{k \ge 1} \frac{|\UUc_k(\LLc)|-1}{k} =
      \frac{1}{\min \Delta(\LLc)} \fixed[-0.15]{\text{ }} \left(\rho (\LLc) - \frac{1}{\rho (\LLc)}\right).
\end{equation}
\end{enumerate}
\end{corollary}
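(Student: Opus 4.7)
For part (1), the plan is to combine the Structure Theorem with directedness to identify the common difference with $\delta := \min \Delta(\LLc)$ and to extend the AAP property from large $k$ to every $k \in \NNp$. The Structure Theorem gives $d \in \NNp$, $M_0 \in \NNb$, and $k_1 \in \NNp$ with $\UUc_k(\LLc)$ an AAP of difference $d$ and bound $M_0$ for $k \geq k_1$; the equality $d = \delta$ follows from two directedness tricks. For $d \mid \delta$, iterate any $L^\ast \in \LLc$ realising a gap of size $\delta$: via $n L^\ast \subseteq L_n \in \LLc$, two elements of $\UUc_k(\LLc)$ with $k \geq k_1$ end up at distance exactly $\delta$, and the inclusion $\UUc_k(\LLc) \subseteq k + d \cdot \Z$ forces $d \mid \delta$. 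For $\delta \leq d$, take consecutive middle-AP elements $b, b+d \in \UUc_k(\LLc)$ with witnesses $L_b, L_{b+d} \in \LLc_k$, and by directedness obtain $L \in \LLc$ with $L_b + L_{b+d} \subseteq L$; then $b+k, b+d+k \in L$, and since the gaps of $L$ inside $\llb b+k, b+d+k \rrb$ sum to $d$ while each is at least $\delta$, one concludes $\delta \leq d$. Once $d = \delta$ is settled, a further directedness step---using $\UUc_k(\LLc) + \{k'\} \subseteq \UUc_{k+k'}(\LLc)$ with $k + k' \geq k_1$---propagates $\UUc_k(\LLc) \subseteq k + \delta \cdot \Z$ to every $k \in \NNp$. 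The finiteness hypothesis makes each $\UUc_k(\LLc)$ finite; for $k < k_1$, an individual bound $M_k$ is chosen large enough to shrink the middle interval to a single point (a trivial AP), and $M := \max\{M_0\} \cup \{M_k : 1 \leq k < k_1\}$ is the desired uniform bound.

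The main obstacle in (1) is that a finite subset of a single residue class mod $\delta$ need not be an AAP with difference $\delta$, and enlarging $M$ can break the AAP property by emptying the middle interval. Exploiting that $k \in \UUc_k(\LLc)$ is what allows the middle to shrink to a singleton around $k$, resolving the issue for the finitely many small indices.

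For part (2), the core tool is superadditivity. Directedness yields $\UUc_m(\LLc) + \UUc_n(\LLc) \subseteq \UUc_{m+n}(\LLc)$, and the elementary bound $|A+B| \geq |A|+|B|-1$ for non-empty finite $A, B \subseteq \Z$ makes $\phi(k) := |\UUc_k(\LLc)|-1$ superadditive. Fekete's lemma then delivers
\[
\lim_{k \to \infty} \frac{|\UUc_k(\LLc)|}{k} \;=\; \sup_{k \geq 1} \frac{|\UUc_k(\LLc)|-1}{k}.
\]
Part (1) yields $|\UUc_k(\LLc)| = (\maxrho_k(\LLc) - \lambda_k(\LLc))/\delta + O(1)$ with uniform error; applying Fekete to the superadditive $\maxrho_k(\LLc)$ and subadditive $\lambda_k(\LLc)$, together with the sharp estimates $\maxrho_k(\LLc) \leq k \cdot \rho(\LLc)$ and $\lambda_k(\LLc) \geq k/\rho(\LLc)$, gives $\maxrho_k(\LLc)/k \to \rho(\LLc)$ and $\lambda_k(\LLc)/k \to 1/\rho(\LLc)$. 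The common limit therefore equals $(\rho(\LLc) - 1/\rho(\LLc))/\delta$. The case $\rho(\LLc) = \infty$ reduces to both sides being infinite, which follows since $\rho_{k_0}(\LLc) = \infty$ for some $k_0$ propagates to $|\UUc_k(\LLc)| = \infty$ for all $k \geq k_0$.
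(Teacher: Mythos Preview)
Your proposal is correct and follows essentially the same route as the paper: for part~(1) you identify $d=\delta$, propagate the residue-class containment to all $k$, and handle the finitely many small $k$ by enlarging the bound so that the middle of $\UUc_k$ collapses to the singleton $\{k\}$; for part~(2) you use the Structure Theorem to get $|\UUc_k| = (\rho_k-\lambda_k)/\delta + O(1)$, Fekete together with the limits $\rho_k/k\to\rho$ and $\lambda_k/k\to 1/\rho$ for the closed form, and superadditivity of $|\UUc_k|-1$ via $|A+B|\ge |A|+|B|-1$ for the $\sup$-equality. The only difference is packaging: the paper obtains $d=\delta$ and the limits for $\rho_k/k$, $\lambda_k/k$ directly from Lemma~\ref{lem:reduction_to_the_upper_part} and Proposition~\ref{prop:limits}, whereas you re-derive these inline via ad hoc directedness arguments (which are fine, though your $d\mid\delta$ step needs the extra trick of adding copies of some $L_\ast\ni 1$ when the witness $L^\ast$ for $\delta$ has small elements, and your $\delta\le d$ step implicitly uses that $\rho_k-\lambda_k\to\infty$ so the middle AP eventually has two elements).
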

As mentioned in the Introduction, first results enforcing the validity of the Structure Theorem for Unions (together with the statements of Corollary \ref{cor:generalization}) were established in \cite[Theorem 4.2]{Ga-Ge09b} and \cite[Theorem 2.6]{Ge16c}. The formula (\ref{equ:limit_of_size}) was first derived in \cite{Ch-Sm93b}, for Dedekind domains with finite class group where each class contains a prime ideal. Note that, in this case, $\min \Delta (R)=1$ and $\rho (R) = \frac{1}{2}\mathsf D (G)$; hence the formula in \cite{Ch-Sm93b}  coincides with (\ref{equ:limit_of_size}). A result along the lines of point \ref{it:main(2)} of Theorem \ref{th:structure_theorem} was first established in \cite{Fr-Ge08}.

In order to prove Theorem \ref{th:structure_theorem}, we proceed in a series of lemmas.
For the rest of this section, $\LLc$ will be a directed family of subsets of $\N_0$. To ease notation,
we set
$$
\rho=\rho (\LLc)
\quad\text{and}\quad
\Delta = \Delta (\LLc),
$$
and for every $k \in \mathbb N_0$ we take
\[
\UUc_k = \UUc_k (\LLc), \quad  \rho_k = \rho_k (\LLc), \quad \text{and}\quad \lambda_k = \lambda_k (\LLc).
\]
If $n \in \N$ and $L_1, \ldots, L_n \in \LLc$, then for all $q_1, \ldots, q_n \in \N$  there is $L \in \LLc$ for which $q_1 L_1 + \ldots + q_n L_n \subseteq L$. In particular,  for every $k \in \NNp$ there exists $L \in \LLc$ with $k \in L$, whence $\LLc_k$ and $\UUc_k$ are non-empty.

\begin{lemma}
\label{prop:basic(1)}
Let  $h, k \in \N_0$. The following conditions hold:
\begin{enumerate}[label={\rm (\arabic{*})}]
\item\label{it:prop:basic(1)(i)} $h \in \UUc_k$ if and only if $k \in \UUc_h$.
\item\label{it:prop:basic(1)(ii)}
$\UUc_h + \UUc_k \subseteq \UUc_{h+k}
$.
\item\label{it:prop:basic(1)(iii)}
If $\mathscr{U}_h$ and $\mathscr{U}_k$ are non-empty, then $\lambda_{h+k} \le \lambda_h + \lambda_k \le h + k \le \maxrho_h + \maxrho_k \le \maxrho_{h+k}$.
\end{enumerate}
\end{lemma}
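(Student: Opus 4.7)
The plan is to unpack the definitions and systematically invoke the directedness of $\LLc$. For \ref{it:prop:basic(1)(i)}, I would observe that $h \in \UUc_k$ means, by definition, there exists $L \in \LLc$ with $k \in L$ (so $L \in \LLc_k$) and $h \in L$; this amounts to the existence of some $L \in \LLc$ containing both $h$ and $k$, which is plainly symmetric in $h$ and $k$, and hence equivalent to $k \in \UUc_h$.

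For \ref{it:prop:basic(1)(ii)}, I would take arbitrary $a \in \UUc_h$ and $b \in \UUc_k$ and pick witnesses $L_1, L_2 \in \LLc$ with $\{h,a\} \subseteq L_1$ and $\{k,b\} \subseteq L_2$. The directedness of $\LLc$ furnishes $L^\prime \in \LLc$ with $L_1 + L_2 \subseteq L^\prime$, so that both $h+k$ and $a+b$ lie in $L^\prime$; hence $a+b \in \UUc_{h+k}$.

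For \ref{it:prop:basic(1)(iii)}, the key observation is that whenever $\UUc_h$ is non-empty, there is $L \in \LLc$ with $h \in L$, so $h \in L \subseteq \UUc_h$; this immediately gives $\lambda_h \le h \le \maxrho_h$, and adding the analogous inequality for $k$ yields the middle estimate $\lambda_h + \lambda_k \le h + k \le \maxrho_h + \maxrho_k$. The outer left inequality follows from \ref{it:prop:basic(1)(ii)}, since $\lambda_h + \lambda_k \in \UUc_h + \UUc_k \subseteq \UUc_{h+k}$ forces $\lambda_{h+k} \le \lambda_h + \lambda_k$. For the outer right inequality, I split on finiteness: if both $\maxrho_h$ and $\maxrho_k$ are finite they are themselves attained as elements of $\UUc_h$ and $\UUc_k$, so $\maxrho_h + \maxrho_k \in \UUc_{h+k}$ by \ref{it:prop:basic(1)(ii)}; and if, say, $\maxrho_h = \infty$, then picking arbitrarily large $a \in \UUc_h$ and any $b \in \UUc_k$ produces unbounded elements $a+b \in \UUc_{h+k}$, whence $\maxrho_{h+k} = \infty = \maxrho_h + \maxrho_k$.

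No step presents a real obstacle; the only thing to be careful about is the $\infty$-bookkeeping in the last chain of inequalities and the tacit use of the standing convention $\inf \emptyset = \infty$, $\sup \emptyset = 0$, which is why the non-emptiness hypothesis on $\UUc_h$ and $\UUc_k$ in \ref{it:prop:basic(1)(iii)} is needed.
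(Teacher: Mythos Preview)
Your proposal is correct and follows essentially the same approach as the paper. For part \ref{it:prop:basic(1)(iii)} the paper condenses your case analysis into the one-line observation that $\inf(X+Y)=\inf X+\inf Y$ and $\sup(X+Y)=\sup X+\sup Y$ for non-empty $X,Y\subseteq\RRb$, together with monotonicity of $\inf$ and $\sup$ under inclusion; your explicit treatment of the $\infty$-case and the attainment of $\lambda_h$ as a minimum is just an unpacking of the same idea.
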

\begin{proof}
\ref{it:prop:basic(1)(i)} Let $h \in \UUc_k$. There then exists $L \in \LLc_k$ such that $h \in L$, and hence $L \in \LLc_h$. It follows that $k \in \UUc_h$, since $k \in L \subseteq \UUc_h$, and this is enough to conclude (by symmetry).

\ref{it:prop:basic(1)(ii)} The claim is obvious if $\UUc_h = \mathscr{U}_0 = \emptyset$ or $\UUc_k = \mathscr{U}_0 = \emptyset$. Otherwise, pick $x \in \UUc_h$ and $y \in \UUc_k$. Then $x \in L_h$ and $y \in L_k$ for some $L_h \in \LLc_h$ and $L_k \in \LLc_k$, and since $\LLc$ is directed, there exist $L' \in \LLc$ such that $x+y \in L_h + L_k \subseteq L'$. Then $L' \in \LLc_{h+k}$, and hence $x+y \in \UUc_{h+k}$, because $h+k \in L_h + L_k$. This implies that $\UUc_h + \UUc_k \subseteq \UUc_{h+k}$.

\ref{it:prop:basic(1)(iii)} It is sufficient to consider that, for all non-empty sets $X, Y \subseteq \RRb$, we have $\inf (X + Y) = \inf X + \inf Y$ and $\sup(X + Y) = \sup X + \sup Y$, and in addition $\inf Y \leq \inf X \le \sup X \le \sup Y$ if $X \subseteq Y$.
\end{proof}
\begin{lemma}
\label{lem:basic(2)}
Let $k \in \N$.
\begin{enumerate} [label={\rm (\arabic{*})}]
\item\label{it:lemma:basic(i)} $\maxrho_k = \sup\{\sup L: L \in \LLc,\fixed[0.4]{\text{ }} \inf L \le k\} = \sup\{\sup L: L \in \LLc_k\}$.
\item\label{it:lemma:basic(ii)} If $\maxrho_k < \infty$ and $\maxrho_k = \sup L$ for some $L \in \LLc$ with $\inf L \le k$, then $\inf L = k$ and $L \in \LLc_k$.
\item\label{it:lemma:basic(iii)} $\maxrho_k \ge \sup\{\sup L: L \in \LLc,\fixed[0.3]{\text{ }} \inf L = k\}$, and equality holds if $\maxrho_k < \infty$.
\end{enumerate}
\end{lemma}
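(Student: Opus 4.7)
The plan is to leverage Lemma \ref{prop:basic(1)}, in particular the sumset inclusion $\UUc_h + \UUc_{k-h} \subseteq \UUc_k$ and the resulting inequality $\rho_h + \rho_{k-h} \le \rho_k$, to control $\sup L$ by $\rho_k$ whenever $\inf L \le k$. For part \ref{it:lemma:basic(i)}, the second equality is immediate from $\UUc_k = \bigcup_{L \in \LLc_k} L$. For the first, one direction is trivial because $L \in \LLc_k$ forces $\inf L \le k$. For the reverse, fix $L \in \LLc$ with $h := \inf L \le k$ and distinguish three cases. If $h = k$, then $L \in \LLc_k$ and there is nothing to do. If $1 \le h < k$, then $\UUc_{k-h}$ is non-empty and Lemma \ref{prop:basic(1)}\ref{it:prop:basic(1)(iii)} yields $\rho_h + \rho_{k-h} \le \rho_k$; since $\rho_{k-h} \ge 0$ and $\sup L \le \rho_h$, this gives $\sup L \le \rho_k$ (trivially so if $\rho_k = \infty$). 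If $h = 0$ and $\rho_k < \infty$, the inclusion $\UUc_0 + \UUc_k \subseteq \UUc_k$ from Lemma \ref{prop:basic(1)}\ref{it:prop:basic(1)(ii)} forces $\UUc_0 \subseteq \{0\}$, since otherwise iterating any positive element of $\UUc_0$ would generate an unbounded subset of $\UUc_k$; hence $\sup L \le 0 \le \rho_k$.

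For part \ref{it:lemma:basic(ii)}, note that $\rho_k \ge k \ge 1$ because $k \in \UUc_k$, so $L$ must be non-empty and its infimum $h$ is a non-negative integer with $h \le k$. I argue by contradiction, assuming $h < k$. If $h \ge 1$, the case analysis above refines to $\rho_{k-h} \ge k - h \ge 1$, and Lemma \ref{prop:basic(1)}\ref{it:prop:basic(1)(iii)} yields $\sup L \le \rho_h \le \rho_k - \rho_{k-h} \le \rho_k - 1 < \rho_k$, contradicting $\sup L = \rho_k$. If $h = 0$, the same analysis (with $\rho_k < \infty$ now in hand) gives $\UUc_0 \subseteq \{0\}$, so $\sup L \le 0 < \rho_k$, again a contradiction. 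Hence $\inf L = k$ and $k \in L$, i.e., $L \in \LLc_k$.

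For part \ref{it:lemma:basic(iii)}, the inequality is immediate from part \ref{it:lemma:basic(i)} since restricting to families with $\inf L = k$ can only shrink the supremum. When $\rho_k < \infty$, every $L$ appearing in part \ref{it:lemma:basic(i)} satisfies $\sup L \le \rho_k < \infty$, so the supremum is taken over a non-empty bounded subset of $\N_0$ and is therefore attained by some $L^\ast \in \LLc$ with $\inf L^\ast \le k$; part \ref{it:lemma:basic(ii)} then forces $\inf L^\ast = k$, giving the reverse inequality. The only delicate point in the whole argument is the boundary case $h = 0$: the sumset closure $\UUc_0 + \UUc_k \subseteq \UUc_k$ must be used to pin down $\rho_0 = 0$ whenever some $\rho_k$ is finite, after which the remaining steps follow mechanically from Lemma \ref{prop:basic(1)}.
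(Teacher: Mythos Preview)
Your proof is correct, but it takes a genuinely different route from the paper's. The paper argues constructively via the directedness axiom: fixing $L_\ast \in \LLc$ with $1 \in L_\ast$, it observes that for any $L \in \LLc$ and any $l \in L$ with $l \le k$, there is $\bar L \in \LLc$ with $L + (k-l)L_\ast \subseteq \bar L$, so $k \in \bar L$ and $\sup \bar L \ge \sup L + (k-l)$. This single construction handles parts \ref{it:lemma:basic(i)} and \ref{it:lemma:basic(ii)} uniformly: for \ref{it:lemma:basic(i)} it shows $\sup L \le \sup \bar L \le \rho_k$, and for \ref{it:lemma:basic(ii)} the extra term $k-l$ forces $l = k$ for \emph{every} $l \in L$ with $l \le k$ once $\sup L = \rho_k < \infty$. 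No case distinction on $h = \inf L$ is needed.

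Your approach instead works entirely through the superadditivity inequality $\rho_h + \rho_{k-h} \le \rho_k$ from Lemma~\ref{prop:basic(1)}, together with $\sup L \le \rho_{\inf L}$. This is perfectly valid and has the virtue of being modular, but it costs you a separate treatment of the boundary case $h = 0$ (where Lemma~\ref{prop:basic(1)}\ref{it:prop:basic(1)(iii)} does not directly apply), and the iteration argument showing $\UUc_0 \subseteq \{0\}$ when $\rho_k < \infty$. The paper's direct use of $L_\ast$ absorbs this case without comment, since $L + k L_\ast$ works just as well when $0 \in L$. Both approaches ultimately rest on directedness; the paper exploits it at the level of individual sets, while you exploit it at the level of the derived invariants $\rho_h$.
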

\begin{proof}
For ease of exposition, set $\maxrho_k^{\fixed[0.2]{\text{ }}\prime} = \sup\{\sup L: L \in \LLc,\fixed[0.3]{\text{ }} \inf L \le k\}$, $\maxrho_k^{\fixed[0.2]{\text{ }}\pprime} = \sup\{\sup L: L \in \LLc_k\}$, and $\bar{\maxrho}_k = \sup\{\sup L: L \in \LLc,\fixed[0.3]{\text{ }} \inf L = k\}$. Then, pick $L_\ast \in \LLc$ such that $1 \in L_\ast$.

\ref{it:lemma:basic(i)} We have to show that $\maxrho_k = \maxrho_k^{\fixed[0.2]{\text{ }}\prime} = \maxrho_k^{\fixed[0.2]{\text{ }}\pprime}$. To begin, suppose for a contradiction that $\maxrho_k^{\fixed[0.2]{\text{ }}\pprime} < \maxrho_k$. There then exists $h \in \UUc_k$ such that $\maxrho_k^{\fixed[0.2]{\text{ }}\pprime} < h \le \maxrho_k$. But $\UUc_k = \bigcup_{L \in \LLc_k} L$ yields that $h \in L$ for some $L \in \LLc_k$, with the result that $\sup L \le \maxrho_k^{\fixed[0.2]{\text{ }}\pprime} < h \le \sup L$, which is impossible.

It follows that $\maxrho_k \le \maxrho_k^{\fixed[0.2]{\text{ }}\pprime}$, and it is clear from the definitions that $\maxrho_k^{\fixed[0.2]{\text{ }}\pprime} \le \maxrho_k^{\fixed[0.2]{\text{ }}\prime}$, so we are left to show that $\maxrho_k^{\fixed[0.2]{\text{ }}\prime} \le \maxrho_k$. For this, assume to the contrary that $\maxrho_k < \maxrho_k^{\fixed[0.2]{\text{ }}\prime}$. Then there is a set $L \in \LLc$ for which
\begin{equation}
\label{equ:basic(2)}
\inf L \le k
\quad\text{and}\quad
\maxrho_k < \sup L \le \maxrho_k^{\fixed[0.2]{\text{ }}\prime}.
\end{equation}
In particular, there exists $l \in L$ with $l \le k$, and $\LLc$ being directed implies that $k = l + (k-l) \in L + (k-l)L_\ast \subseteq \bar L$ for some $\bar L \in \LLc$. Thus $\bar{L} \in \LLc_k$, viz. $\bar{L} \subseteq \UUc_k$, and we have $\sup L \le \sup \bar{L} \le \maxrho_k$, which contradicts \eqref{equ:basic(2)} and concludes the proof of point \ref{it:lemma:basic(i)} of the lemma.

\ref{it:lemma:basic(ii)} Let $\maxrho_k = \sup L < \infty$ for some $L \in \LLc$ with $\inf L \le k$. Accordingly, let $l \in L$ such that $l \le k$. As before, we have (by the directedness of $\LLc$) that $k \in L + (k-l)L_\ast \subseteq \bar L$ for some $\bar L \in \LLc$. This implies $\maxrho_k \ge \sup \bar{L} \ge \sup L + k - l \ge \maxrho_k$, which is possible only if $l = k$. But $l$ is an arbitrary integer in $L$ that is $\le k$, so we can conclude that $\inf L = k$, and hence $L \in \LLc_k$.

\ref{it:lemma:basic(iii)} It is clear from point \ref{it:lemma:basic(i)} that $\bar{\maxrho}_k \le \maxrho_k^{\fixed[0.2]{\text{ }}\prime} = \maxrho_k$, and the rest is straightforward by point \ref{it:lemma:basic(ii)}.
\end{proof}

The next propositions provide an interesting connection among the parameters we have so far introduced. In the proof of the first of them, we will make use of the following result, often referred to as  Fekete's Lemma (see \cite[Pt. I, Chap. 3, Problem No. 98, pp. 23 and 198]{Po-Sz04}).
\begin{lemma}
Let $(x_k)_{k \ge 1}$ be a sequence with values in $\RRb \cup \{\infty\}$ such that $x_h + x_k \le x_{h+k}$ for all $h, k \in \NNp$. Then the limit of $\frac{1}{k}x_k$ as $k \to \infty$ exists and is equal to $\sup_{k \ge 1} \frac{1}{k}x_k$.
\end{lemma}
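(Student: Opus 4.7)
The plan is to prove the classical Fekete superadditive lemma by the standard division-with-remainder argument, after isolating the degenerate case in which the sequence takes the value $\infty$.

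First, set $L = \sup_{k \ge 1} \frac{x_k}{k} \in \RRb \cup \{\infty\}$. Since $\frac{x_k}{k} \le L$ for every $k \in \NNp$, it is immediate that $\limsup_{k \to \infty} \frac{x_k}{k} \le L$. The whole point, therefore, is to establish the reverse inequality $\liminf_{k \to \infty} \frac{x_k}{k} \ge L$, and then combining the two will give both the existence of the limit and the desired identification with $L$.

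Next, I would dispose of the case where $x_{k_0} = \infty$ for some $k_0$. By superadditivity and the fact that all values lie in $\RRb \cup \{\infty\}$ (so no $-\infty$ can appear and $\infty + t = \infty$ for every $t \in \RRb \cup \{\infty\}$), an easy induction gives $x_{k_0 + j} \ge x_{k_0} + x_j = \infty$ for every $j \ge 1$, so $x_k = \infty$ for all $k \ge k_0 + 1$. Hence both $\lim_{k \to \infty} \frac{x_k}{k}$ and $L$ are equal to $\infty$, and the conclusion is trivial. So from now on I may assume $x_k \in \RRb$ for every $k \in \NNp$.

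Now fix any real number $M < L$. By the definition of $L$ as a supremum, I can choose $m \in \NNp$ with $\frac{x_m}{m} > M$. For each integer $n > m$, write $n = qm + r$ with $q \in \NNp$ and $r \in \llb 0, m-1 \rrb$ (Euclidean division). Applying superadditivity $q$ times yields $x_{qm} \ge q\, x_m$, and one further application (when $r \ge 1$) gives $x_n = x_{qm + r} \ge q\, x_m + x_r$; when $r = 0$ the same inequality holds trivially with the convention of dropping the $x_r$ term. Setting $c = \min\{x_1, x_2, \ldots, x_{m-1}\} \in \RRb$, which is finite because we are in the non-degenerate case, I obtain
\[
\frac{x_n}{n} \;\ge\; \frac{q\, x_m + c}{qm + r} \;=\; \frac{qm}{qm+r}\cdot\frac{x_m}{m} \;+\; \frac{c}{qm+r}.
\]
As $n \to \infty$ forces $q \to \infty$ while $r$ stays bounded, the first summand tends to $\frac{x_m}{m}$ and the second to $0$, so $\liminf_{n \to \infty} \frac{x_n}{n} \ge \frac{x_m}{m} > M$. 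Since $M < L$ was arbitrary, $\liminf_{n \to \infty} \frac{x_n}{n} \ge L$, which combined with the reverse inequality of the first paragraph completes the proof.

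The only delicate point is to make sure that the additive constant $c$ arising from the remainders $x_r$ is a genuine real number rather than $-\infty$ or $\infty$; this is handled precisely by first separating off the degenerate case in which some $x_{k_0}$ equals $\infty$, so there is really no substantive obstacle.
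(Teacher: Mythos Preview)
Your proof is correct and is the standard division-with-remainder argument for Fekete's lemma. The paper itself does not give a proof of this statement; it simply cites it as a known result (referring to P\'olya and Szeg\H{o}), so there is no ``paper's own proof'' to compare against.

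One cosmetic quibble: in the displayed inequality $\frac{x_n}{n} \ge \frac{q\, x_m + c}{qm + r}$, the case $r = 0$ only gives $x_n \ge q\, x_m$, so the inequality with $c$ added requires $c \le 0$; and when $m = 1$ the set $\{x_1,\ldots,x_{m-1}\}$ is empty. Both issues vanish if you set $c = \min\{0, x_1, \ldots, x_{m-1}\}$ (with the empty minimum equal to $0$); the limit argument is unaffected. This is not a genuine gap, just a small point of bookkeeping.
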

\begin{proposition}
\label{prop:limits}
We have
\begin{equation*}
\label{equ:rho=sup}
\rho = \sup_{k \ge 1} \frac{\maxrho_k}{k} = \lim_{k \to \infty} \frac{\maxrho_k}{k}
\quad\text{and}\quad
\frac{1}{\rho} = \inf_{k \ge 1} \frac{\lambda_k}{k} = \lim_{k \to \infty} \frac{\lambda_k}{k}.
\end{equation*}
In particular, if $\Delta \ne \emptyset$, then $\rho > 1$ and $\min \{k - \lambda_k, \maxrho_k - k \} \to \infty$ as $k \to \infty$.
\end{proposition}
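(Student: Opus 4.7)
The plan is to apply Fekete's Lemma to $(\maxrho_k)$ and to $(-\lambda_k)$ to obtain the $\lim = \sup/\inf$ equalities, then to identify the extremal values in terms of $\rho$, and finally to deduce the ``in particular'' statement from these limits.

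First I would note that $\UUc_k \ne \emptyset$ for every $k \in \NNp$ (as recorded in the paragraph preceding Lemma \ref{prop:basic(1)}), so $\lambda_k \in \NNb$ and $\maxrho_k \in \NNp \cup \{\infty\}$. Lemma \ref{prop:basic(1)}\ref{it:prop:basic(1)(iii)} supplies the superadditivity $\maxrho_h + \maxrho_k \le \maxrho_{h+k}$ and, after negating, $(-\lambda_h) + (-\lambda_k) \le -\lambda_{h+k}$, whence Fekete's Lemma gives
$$\lim_{k \to \infty} \frac{\maxrho_k}{k} = \sup_{k \ge 1} \frac{\maxrho_k}{k}, \qquad \lim_{k \to \infty} \frac{\lambda_k}{k} = \inf_{k \ge 1} \frac{\lambda_k}{k}.$$

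Next, I would identify $\sup_k \maxrho_k/k$ with $\rho$. For ``$\ge$'', given $L \in \LLc$ with $L^+ \ne \emptyset$, set $k_0 = \min L^+$; then $L \in \LLc_{k_0}$ and Lemma \ref{lem:basic(2)}\ref{it:lemma:basic(i)} yields $\maxrho_{k_0}/k_0 \ge \sup L^+/\min L^+ = \rho(L)$. For ``$\le$'', any $L \in \LLc_k$ has $\min L^+ \le k$ (since $k \in L^+$), so $\sup L = \sup L^+ \le \rho(L) \cdot k \le \rho k$, and Lemma \ref{lem:basic(2)}\ref{it:lemma:basic(i)} gives $\maxrho_k \le \rho k$. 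The dual identification $\inf_k \lambda_k/k = 1/\rho$ proceeds symmetrically: ``$\le$'' comes from setting $k_0 = \sup L^+$ for $L$ with $0 < \sup L^+ < \infty$ and noting $\min L^+ \in \UUc_{k_0}$; ``$\ge$'' comes from the observation that $\lambda_k \in \UUc_k$ (as the infimum of a non-empty subset of $\NNb$ is attained), so some $L \in \LLc_k$ contains both $\lambda_k$ and $k$, giving $\rho(L) \ge k/\lambda_k$ whenever $\lambda_k \ge 1$.

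The delicate point I expect to be the main obstacle is the degenerate case $\lambda_k = 0$, which I would dispatch as follows: if $\{0, k\} \subseteq L$ for some $L \in \LLc_k$, then by directedness there is $L_n \in \LLc$ with $nL \subseteq L_n$, and $L_n \supseteq \{0, k, 2k, \ldots, nk\}$ forces $\maxrho_k = \infty$, so via the first identification $\rho = \infty$, and both $\lambda_k/k$ and $1/\rho$ equal $0$, keeping all equalities consistent. The ``in particular'' clause then follows in the same spirit: $\Delta \ne \emptyset$ provides some $L \in \LLc$ with $|L| \ge 2$, and either $L$ contains two positive elements $a < b$ (giving $\rho \ge b/a > 1$) or $L$ contains $0$ together with a positive element (giving $\rho = \infty$ by the iteration above); in either case $\rho > 1$. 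The limits $\maxrho_k/k \to \rho > 1$ and $\lambda_k/k \to 1/\rho < 1$ finally force $\maxrho_k - k = k(\maxrho_k/k - 1) \to \infty$ and $k - \lambda_k = k(1 - \lambda_k/k) \to \infty$.
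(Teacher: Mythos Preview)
Your argument is essentially correct and, for the identification $\inf_k \lambda_k/k = 1/\rho$, noticeably cleaner than the paper's. Both proofs open with Fekete's Lemma and identify $\sup_k \rho_k/k = \rho$ the same way. For $\lambda$, however, the paper splits into cases $\rho = \infty$ and $\rho < \infty$ and, in the finite case, proves the upper bound $\lambda_k/k \le 1/\rho + o(1)$ via a somewhat elaborate sandwich: it writes $k = \rho_{l_k} + j_k$ for the maximal $l_k$ with $\rho_{l_k} < k$, uses $\lambda_{\rho_{l_k}} \le l_k$, and bounds $\lambda_k/k$ by a quantity tending to $1/\rho$. Your approach bypasses this entirely by the symmetric observation that if $k_0 = \max L^+ < \infty$ then $\lambda_{k_0}/k_0 \le 1/\rho(L)$, which is both shorter and more transparent.

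There is one small gap you should close. Your ``$\le$'' step for $\lambda$ assumes you can choose $L$ with $0 < \sup L^+ < \infty$ and $\rho(L)$ arbitrarily close to $\rho$. That is automatic when $\rho < \infty$ (then $\sup L^+ = \rho(L)\min L^+ < \infty$), and also when $\rho = \infty$ is a genuine supremum over sets with finite $\sup L^+$. But if $\rho = \infty$ because some $L$ itself has $\sup L^+ = \infty$, your $\lambda_k = 0$ clause does not cover it: $\rho = \infty$ does not force any $\lambda_k$ to vanish. The fix is one line: for such an $L$, set $m = \min L^+$ and let $M$ run through $L^+$; then $L \in \LLc_M$ gives $\lambda_M \le m$, hence $\lambda_M/M \le m/M \to 0$, so $\inf_k \lambda_k/k = 0 = 1/\rho$. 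With this addition the proof is complete.
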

\begin{proof}
The second part of the statement is trivial, and we can just focus on the limits. To this end, we have by Lemma \ref{prop:basic(1)} that $\lambda_{h+k} \le \lambda_h + \lambda_k \le \maxrho_h + \maxrho_k \le \maxrho_{h+k}$ for all $h, k \in \NNp$.

So, applying Fekete's Lemma to the sequences $(\maxrho_k)_{k \ge 1}$ and $(-\lambda_k)_{k \ge 1}$, we find that the limits of $\frac{1}{k} \maxrho_k$ and $\frac{1}{k} \lambda_k$ as $k \to \infty$ exist and are equal, respectively, to $\varrho$ and $\lambda$, where for ease of notation we set
$$\varrho := \sup_{k \ge 1} \frac{1}{k} \maxrho_k
\quad\text{and}\quad
\lambda := \inf_{k \ge 1} \frac{1}{k} \lambda_k.
$$
We are left to show that $\rho = \varrho$ and $1/\rho = \lambda$.
\vskip 0.1cm
\noindent{}\textsc{Case 1:} $\rho = \infty$.

For each $\varepsilon \in \RRb^+$ there exists $L_\varepsilon \in \LLc$ with $L_\varepsilon^+ \ne \emptyset$ and $\rho(L_\varepsilon) > \frac{1}{\varepsilon}$. So, taking $k_\varepsilon = \min L_\varepsilon^+$ yields that $\varrho \ge \frac{1}{k_\varepsilon} \maxrho_{k_\varepsilon} \ge \rho(L_\varepsilon) > \frac{1}{\varepsilon}$,
whence $\varrho = \infty$.
It remains to prove that $\lambda = 0$. This is obvious if $\lambda_k = 0$ for some $k \in \NNp$.
Otherwise, we have from the above that $k_\varepsilon = \min L_\varepsilon$ (i.e., $L_\varepsilon = L_\varepsilon^+$) and there is $K_\varepsilon \in L_\varepsilon$ such that $K_\varepsilon/k_\varepsilon \ge \frac{1}{\varepsilon}$. Thus we find that
$$
0 \le \lambda \le \frac{\lambda_{K_\varepsilon}}{K_\varepsilon} \le \frac{1}{K_\varepsilon} \min L_\varepsilon \le \varepsilon,
$$
which is enough to conclude that $\lambda = 0$.
\vskip 0.1cm
\noindent{}\textsc{Case 2:} $\rho < \infty$.

If $\varrho > \rho$, then $\frac{1}{k} \maxrho_k > \rho$ for some $k \in \NNp$, and this in turn implies that there exists $\bar{L} \in \LLc_k$ with $\bar{L}^+ \ne \emptyset$ such that $\rho < \frac{1}{k} \sup \bar{L} \le \sup \bar{L}/\min \bar{L}^+ = \rho(\bar{L})$, which is, however, a contradiction, because $\rho(\bar{L}) \le \rho$.
If, on the other hand, $\varrho < \rho$, then there is $\myunderbar{L} \in \LLc$ with $\myunderbar{L}^+ \ne \emptyset$ and $\varrho < \rho(\myunderbar{L}) = \sup \myunderbar{L}/\min \myunderbar{L}^+$, and taking $k = \min \myunderbar{L}^+$ yields  $\varrho < \frac{1}{k} \maxrho_k \le \varrho$, which is still impossible.
So we see that $\rho = \varrho$, and hence $\maxrho_1, \maxrho_2, \ldots$ are all finite.
In particular, for each $k \in \NNp$ we can find a set $L_k \in \LLc_k$ such that $\maxrho_k = \max L_k$, and by Lemma \ref{lem:basic(2)}\ref{it:lemma:basic(ii)} we have $\inf L_k = \min L_k = k$. It follows that $k \in \UUc_{\maxrho_k}$, and hence $\lambda_{\maxrho_k} \le k$, for all $k \in \NNp$. Moreover,
\begin{equation}
\label{equ:bound-from-below-on-lambda}
\lambda \ge \inf_{k \ge 1} \frac{\lambda_k}{\sup L_k} = \inf_{k \ge 1} \frac{1}{\rho(L_k)} \ge \frac{1}{\rho}.
\end{equation}
With this in mind, pick an integer $k \ge 1 + \maxrho_1$ and let $l_k$ be the maximal $l \in \NNp$ such that $\maxrho_l < k$. Then $k = \maxrho_{l_k} + j_k$ for some $j_k \in \llb 1, \maxrho_{l_k+1} - \maxrho_{l_k} \rrb$, and since the function
$$
{[0,\infty[} \to \RRb, x \mapsto \frac{a+x}{b+x}
$$
is non-decreasing whenever $a, b \in \RRb^+$ and $a \le b$, we obtain from \eqref{equ:bound-from-below-on-lambda} and Lemma \ref{prop:basic(1)} that
\begin{equation*}
\label{equ:chain_of_inequalities}
\begin{split}
\frac{1}{\rho} \le \lambda &
    \le \frac{\lambda_k}{k} \le \frac{\lambda_{\maxrho_{l_k}} + \lambda_{j_k}}{\maxrho_{l_k} + j_k} \le \frac{l_k + j_k}{\maxrho_{l_k} + j_k} \le \frac{l_k + \maxrho_{l_k+1} - \maxrho_{l_k}}{\maxrho_{l_k} + \maxrho_{l_k+1} - \maxrho_{l_k}} = \frac{l_k}{\maxrho_{l_k+1}} + 1 - \frac{\maxrho_{l_k}}{\maxrho_{l_k+1}}.
\end{split}
\end{equation*}
But the rightmost side of this last equation tends to $1/\rho$ as $k \to \infty$, and hence $\lambda = 1/\rho$.
\end{proof}
\begin{proposition}
\label{prop:accepter_elasticity}
Suppose that $\LLc$ has elasticity $\rho  < \infty$. Then the following statements are equivalent{\rm \,:}
\begin{enumerate}[label={\rm (\alph{*})}]
\item \label{it:equivalents(i)} There is $L \in \LLc$ such that $L^+ \ne \emptyset$ and $\rho(L) = \rho$.
\item\label{it:equivalents(ii)} There exists $n \in \NNp$ such that $nk \rho = \maxrho_{nk}$ for all $k \in \NNp$.
\item\label{it:equivalents(iii)} There is some $k \in \NNp$ such that $k \rho = \maxrho_k$.
\end{enumerate}
Moreover, if at least one of  conditions \ref{it:equivalents(i)}-\ref{it:equivalents(iii)} is verified, there exists $K \in \NNb$ such that $\maxrho_{k+1} \le \maxrho_k + K$ for every $k \in \NNp$.
\end{proposition}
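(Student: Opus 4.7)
The strategy is to close the cycle (a) $\Rightarrow$ (b) $\Rightarrow$ (c) $\Rightarrow$ (a) and then derive the moreover statement from (b). For (a) $\Rightarrow$ (b), I would take $L$ as in (a), set $n := \min L^+ \in \NNp$, and observe that $\rho(L) = \rho < \infty$ forces $\sup L = n\rho$. For each $k \in \NNp$, the iterated directedness property recorded just before Lemma \ref{prop:basic(1)} provides some $L' \in \LLc$ with $kL \subseteq L'$; then $kn \in L'$, so $L' \in \LLc_{kn}$ and $\sup L' \ge \sup(kL) = k \sup L = kn\rho$, whence $\rho_{kn} \ge kn\rho$. The reverse inequality $\rho_{kn} \le kn\rho$ is immediate from Proposition \ref{prop:limits}, and this yields (b). The implication (b) $\Rightarrow$ (c) is trivial by taking $k = 1$ in (b).

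The implication (c) $\Rightarrow$ (a) is the main obstacle. Since $\UUc_k \subseteq \NNb$ and $\rho_k \le k\rho < \infty$, the quantity $\rho_k$ is a finite non-negative integer, so under hypothesis (c) the value $k\rho$ is itself a finite integer. By Lemma \ref{lem:basic(2)}\ref{it:lemma:basic(i)} we have $\rho_k = \sup\{\sup L : L \in \LLc_k\}$; since the members of this family lie in $\NNb \cup \{\infty\}$ and the supremum is a finite integer, the supremum must be attained---an attainment which Lemma \ref{lem:basic(2)} does not supply in general, and this is the subtle point. Choose $L \in \LLc_k$ with $\sup L = k\rho$; then $k \in L$ gives $\min L^+ \le k$, while $\rho(L) \le \rho$ rearranges to $\min L^+ \ge \sup L / \rho = k$. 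Hence $\min L^+ = k$ and $\rho(L) = \rho$, proving (a).

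For the moreover statement, fix $n$ as supplied by (b), let $k \in \NNp$, and write $k = qn + r$ with $q \in \NNb$ and $0 \le r < n$. If $r = 0$ (so $q \ge 1$), (b) gives $\rho_k = k\rho$ directly. If $q \ge 1$ and $r \ge 1$, super-additivity from Lemma \ref{prop:basic(1)}\ref{it:prop:basic(1)(iii)} combined with (b) yields $\rho_k \ge \rho_{qn} + \rho_r \ge qn\rho = k\rho - r\rho \ge k\rho - n\rho$. If $q = 0$ (so $k < n$), the bound $\rho_k \ge 0 \ge k\rho - n\rho$ is trivial. In every case $\rho_k \ge k\rho - n\rho$, and combining with $\rho_{k+1} \le (k+1)\rho$ from Proposition \ref{prop:limits} gives $\rho_{k+1} - \rho_k \le (n+1)\rho$, so one may take $K = \lceil (n+1)\rho \rceil$.
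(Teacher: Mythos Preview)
Your proof is correct and follows essentially the same route as the paper's: the same cycle (a) $\Rightarrow$ (b) $\Rightarrow$ (c) $\Rightarrow$ (a), the same use of directedness and Proposition~\ref{prop:limits} for (a) $\Rightarrow$ (b), and the same super-additivity argument for the moreover clause (yielding a slightly looser constant $K$). The only cosmetic difference is in (c) $\Rightarrow$ (a), where you argue directly that the supremum is attained (since $\{\sup L : L \in \LLc_k\}$ is a bounded set of non-negative integers) and that $\min L^+ = k$, whereas the paper invokes Lemma~\ref{lem:basic(2)}\ref{it:lemma:basic(ii)} for the latter.
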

\begin{proof} Assume first that $\rho(L) = \rho$ for some $L \in \LLc$ with $L^+ \ne \emptyset$, and set $n = \min L^+$. Since $\LLc$ is directed, it follows that for every $k \in \NNp$ there exists $L_k \in \LLc$ such that $nk \in kL \subseteq L_k$, and thus
$$
\frac{\maxrho_{nk}}{nk} \ge \frac{\sup L_k}{nk} \ge \frac{k \sup L}{nk} = \rho(L) = \rho \ge \frac{\maxrho_{nk}}{nk},
$$
where the last inequality comes as a consequence of Proposition \ref{prop:limits}. This proves that \ref{it:equivalents(i)} $\Rightarrow$ \ref{it:equivalents(ii)}.

Suppose now that \ref{it:equivalents(iii)} holds, and let $k \in \NNp$ such that $k \rho = \maxrho_k$. Then $\maxrho_k = \max L$ for some $L \in \LLc_k$ with $L^+ \ne \emptyset$. So $k = \min L$ by Lemma \ref{lem:basic(2)}\ref{it:lemma:basic(ii)}, which gives $k \rho = \max L = k\fixed[0.15]{\text{ }} \rho(L)$, viz. $\rho(L) = \rho$.
Putting it all together, we thus see that \ref{it:equivalents(iii)} $\Rightarrow$ \ref{it:equivalents(i)}.

Since, on the other hand, it is obvious that \ref{it:equivalents(ii)} $\Rightarrow$ \ref{it:equivalents(iii)}, we are just left to show the second part of the statement.
To this end, let $n \in \NNp$ with the property that $nk \rho = \maxrho_{nk}$ for all $k \in \NNp$.

Given $k \in \NNp$, there are $q_k \in \NNb$ and $r_k \in \llb 1, n \rrb$ such that $k = nq_k + r_k$, so we get from the above and Lemma \ref{prop:basic(1)}\ref{it:prop:basic(1)(iii)} that
\begin{equation*}
\begin{split}
k \rho - n(\rho - 1)
    &
    \le (nq_k + r_k)\rho - r_k (\rho - 1)
    = nq_k \rho + r_k
    \le \maxrho_{nq_k} + \maxrho_{r_k} \le \maxrho_{nq_k + r_k} = \maxrho_k,
\end{split}
\end{equation*}
namely $k \rho \le \maxrho_k + n(\rho - 1)$. It follows that
$$
\maxrho_{k+1} \le (k+1) \rho \le \maxrho_k + n(\rho - 1) + \rho
$$
for every $k \in \NNp$, which completes the proof by taking $K = (n+1)\rho - n$.
\end{proof}
\begin{proposition}
\label{prop:delta_sets}
Let  $\Delta' \subseteq \Delta$ be a non-empty subset with $\gcd(\Delta') \le \min \Delta$. Then $\gcd(\Delta') = \min \Delta$. In particular, $\gcd \Delta = \min \Delta$.
\end{proposition}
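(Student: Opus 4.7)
The plan is to first reduce the claim to the special case $\Delta' = \Delta$. If we know that $\gcd \Delta = \min \Delta$, then for any non-empty $\Delta' \subseteq \Delta$ the integer $\gcd(\Delta')$ is a multiple of $\gcd \Delta = \min \Delta$, so $\gcd(\Delta') \ge \min \Delta$; combined with the hypothesis $\gcd(\Delta') \le \min \Delta$, this forces equality. Hence the real work lies in showing that $\delta := \min \Delta$ divides every $d \in \Delta$.

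Arguing by contradiction, assume some $d \in \Delta$ is not a multiple of $\delta$, and write $d = q'\delta + r$ with $q' \in \NNp$ and $0 < r < \delta$ (here $q' \ge 1$ since $d \ge \min \Delta = \delta$). By the definition of $\Delta$ we may select $L, L' \in \LLc$ together with $k, \ell \in L$ and $k', \ell' \in L'$ such that $\ell - k = d$, $\ell' - k' = \delta$, and the intervals $\llb k, \ell \rrb$ and $\llb k', \ell' \rrb$ meet $L$ and $L'$, respectively, only at their endpoints. Iterating the directedness of $\LLc$, we obtain $L^\ast \in \LLc$ with $L + q' L' \subseteq L^\ast$. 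Since $k, \ell \in L$ and both $q' k'$ and $q' \ell'$ lie in the $q'$-fold sumset $q' L'$, the set $L^\ast$ contains the two points
\begin{equation*}
p_1 := k + q' \ell' \quad\text{and}\quad p_2 := \ell + q' k', \quad\text{with}\quad p_2 - p_1 \ = \ d - q' \delta \ = \ r.
\end{equation*}

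Because $0 < r < \delta$, the finite set $L^\ast \cap \llb p_1, p_2 \rrb$ contains its endpoints and so splits $r$ as a sum of positive-integer consecutive gaps; any such gap $d^\ast$ is a distance of $L^\ast$ (hence an element of $\Delta$) and satisfies $d^\ast \le r < \delta$, contradicting the minimality of $\delta$ in $\Delta$. The main obstacle in this argument is that the sumset $L + q'L'$ need not itself belong to $\LLc$: directedness only guarantees that it embeds into some $L^\ast \in \LLc$, which could a priori carry many additional elements. The saving observation is that any such extras can only further subdivide the interval $\llb p_1, p_2 \rrb$, and since this interval already has length $r < \delta$, no subdivision can prevent a distance of $L^\ast$ from being strictly less than $\delta$.
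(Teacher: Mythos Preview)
Your proof is correct, but it follows a different route from the paper's. The paper works directly with an arbitrary $\Delta'$: writing $\delta = \gcd(\Delta')$ as a $\mathbb Z$-linear combination $\delta = \sum_i \eps_i m_i d_i$ of distances $d_i \in \Delta'$ via B\'ezout, it then uses directedness to produce a single set $L \in \LLc$ containing two elements exactly $\delta$ apart, which immediately forces $\min \Delta \le \delta$. You instead first reduce to showing $\gcd \Delta = \min \Delta$, and then prove that every $d \in \Delta$ is a multiple of $\delta = \min \Delta$ by a single Euclidean-division step, producing (via directedness) a set $L^\ast$ containing two elements at distance $r < \delta$, and extracting a too-small distance of $L^\ast$ from that interval.

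What the two approaches buy: the paper's argument handles the general $\Delta'$ in one stroke and yields an $L$ with a gap of size \emph{exactly} $\gcd(\Delta')$, at the cost of invoking a full B\'ezout representation (potentially many $d_i$ and coefficients). Your argument is more elementary---it never needs more than two distances and a single division with remainder---but requires the extra (easy) reduction step and only produces a gap of size $\le r$, which is nonetheless enough for the contradiction.
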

\begin{proof}
Using that $\Delta'$ is non-empty, define $\delta = \gcd(\Delta')$. There exist $\eps_1, \ldots, \eps_n \in \{\pm 1\}$, $d_1, \ldots, d_n \in \Delta'$, and $m_1, \ldots, m_n \in \NNp$ such that
$\delta = \eps_1 m_1 d_1 + \ldots + \eps_n m_n d_n$.

In addition, for each $i = 1, \ldots, n$ we can find $x_i \in \NNb$ and $L_i \in \LLc$ such that $\{x_i, x_i + \eps_i d_i\} \subseteq L_i$. Since $\LLc$ is directed, this gives that
$
\{m_i x_i, m_i (x_i + \eps_i d_i)\} \subseteq
m_i L_i \subseteq L_i'$ for some $L_i' \in \LLc$. Moreover, there is $L \in \LLc$ such that $L_1' + \ldots + L_n' \subseteq L$. Put $l = m_1 x_1 + \ldots + m_n x_n$.

Then we have by the above that
$
l + \delta =
\sum_{i=1}^n m_i (x_i + \eps_i d_i)
$,
and we see that both $l$ and $l+\delta$ belong to $L$. Thus $\min \Delta \le \min \Delta(L) \le \delta = \gcd(\Delta')$, and this is enough to conclude that $\gcd(\Delta') = \min \Delta$, as on the other hand we have by hypothesis that $\gcd(\Delta') \le \min \Delta$.

The rest is clear, because $\Delta$ being non-empty implies that $\gcd \Delta \le \min \Delta$.
\end{proof}
\begin{lemma}
\label{lem:arbitrary_long_APs}
Let $d \in \Delta$.
\begin{enumerate} [label={\rm (\arabic{*})}]
\item\label{it:arbitrary_long_APs(i)} There exists $l \in \mathbb{N}_0$ such that, for every $q \in \NNp$, there is a set $L_q \in \LLc$ with $lq + d \cdot \llb 0, q  \rrb \subseteq L_q \subseteq \UUc_{l q} \cap \llb lq, \maxrho_{lq} \rrb$.
\item\label{it:arbitrary_long_APs(ii)} Let $q \in \NNp$. Then for every sufficiently large $k \in \NNp$ there exists $L_k \in \LLc$ such that $k + d \cdot \llb 0, q \rrb \subseteq L_k \subseteq \UUc_k \cap \llb k, \maxrho_k \rrb$.
\end{enumerate}
\end{lemma}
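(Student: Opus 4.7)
The plan for part (1) is to use the hypothesis $d \in \Delta$ to pick $L^\ast \in \LLc$ containing a pair $\{a, a+d\}$ and set $l = a$. For each $q \in \NNp$, iterated directedness produces $L_q \in \LLc$ with the $q$-fold sumset $qL^\ast \subseteq L_q$. Inside this sumset, the sums using $j$ copies of $a+d$ and $q-j$ copies of $a$ equal $lq + jd$, so $lq + d \cdot \llb 0, q \rrb \subseteq qL^\ast \subseteq L_q$. Since $lq \in L_q$, we have $L_q \in \LLc_{lq}$, and then $L_q \subseteq \UUc_{lq}$ and $\sup L_q \le \maxrho_{lq}$ follow automatically.

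The subtle requirement is $\min L_q \ge lq$, i.e.\ $L_q \subseteq \llb lq, \maxrho_{lq} \rrb$. To enforce it, the plan is first to refine the choice so that $\min L^\ast = a$ (by replacing $a$ with the smallest element of $L^\ast$ at which a gap of $d$ occurs), thereby ensuring $\min(qL^\ast) = lq$; then, among the $L \in \LLc$ containing $qL^\ast$, pick $L_q$ whose supremum equals, or approaches, $\maxrho_{lq}$. By Lemma~\ref{lem:basic(2)}\ref{it:lemma:basic(ii)}, any $L \in \LLc$ with $\sup L = \maxrho_{lq} < \infty$ and $\inf L \le lq$ must satisfy $\inf L = lq$, which forces $\min L_q = lq$.

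For part (2), fix $q$ and use part (1) to extract $l$ and $L^\sharp \in \LLc$ with $lq + d \cdot \llb 0, q \rrb \subseteq L^\sharp \subseteq \llb lq, \maxrho_{lq} \rrb$. Pick $L_\ast \in \LLc$ with $1 \in L_\ast$ (available by directedness) and $\min L_\ast = 1$. For $k \ge lq$, set $t = k - lq$ and apply directedness to obtain $L_k \in \LLc$ containing $L^\sharp + tL_\ast$; summing $t$ copies of $1$ shows $t \in tL_\ast$, so the shifted AP $k + d \cdot \llb 0, q \rrb$ lies inside $L_k$, while $k \in L_k$ gives $L_k \subseteq \UUc_k$. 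The lower bound $\min L_k \ge k$ is then obtained by exactly the same $\sup$-maximisation refinement as in part (1), combined with the identity $\min(L^\sharp + tL_\ast) = lq + t = k$.

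The principal obstacle in both parts is the minimum-control step: directedness alone produces some set in $\LLc$ containing a prescribed sumset but offers no a priori grip on its infimum. The resolution hinges on Lemma~\ref{lem:basic(2)}\ref{it:lemma:basic(ii)}, which converts attainment of $\maxrho_k$ into the corresponding set having infimum $k$; arranging for a supremum-attainer (or a sufficiently close approximant) to simultaneously contain the AP-bearing sumset is the delicate interleaving that the final argument will have to carry out.
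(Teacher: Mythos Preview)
The paper's proof is much shorter than what you propose and does not attempt to control $\min L_q$ at all. It simply picks $L_0 \in \LLc$ and $l$ with $\{l,l+d\}\subseteq L_0$, uses directedness to get $L_q\supseteq qL_0\supseteq lq+d\cdot\llb 0,q\rrb$, notes that $lq\in L_q$ gives $L_q\subseteq\UUc_{lq}$, and declares ``the rest is obvious.'' Part~(2) is handled by adding any $L'\in\LLc$ with $k-lq\in L'$ to shift, with the same closing phrase.

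You have correctly spotted that the inclusion $L_q\subseteq\llb lq,\maxrho_{lq}\rrb$, read literally, is \emph{not} obvious and is not proved by the paper's argument. But the lemma is invoked only once (in Lemma~\ref{lem:reduction_to_the_upper_part}), and there what is needed is $k+\delta\cdot\llb 0,M+1\rrb\subseteq\UUc_k\cap\llb k,\maxrho_k\rrb$, i.e.\ that the \emph{arithmetic progression} lies in the interval, not that the whole of $L_k$ does. That weaker conclusion is indeed trivial once $lq+d\cdot\llb 0,q\rrb\subseteq L_q\subseteq\UUc_{lq}$. So the lemma statement is slightly overstated, and the paper's proof delivers exactly what is used downstream.

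Your proposed repair of the literal statement has genuine gaps. First, ``replacing $a$ with the smallest element of $L^\ast$ at which a gap of $d$ occurs'' does not yield $a=\min L^\ast$; there may well be elements of $L^\ast$ below every $d$-gap, so $\min(qL^\ast)=q\min L^\ast<lq$ is still possible. Second, and more seriously, the Lemma~\ref{lem:basic(2)}\ref{it:lemma:basic(ii)} maneuver needs an $L\in\LLc$ that simultaneously contains $qL^\ast$ \emph{and} attains $\sup L=\maxrho_{lq}$; the directed-family axiom gives you no control over \emph{which} set in $\LLc$ absorbs a prescribed sumset, so there is no reason such an $L$ exists. The ``delicate interleaving'' you defer to the final argument is in fact unachievable with the tools at hand, and for an arbitrary directed family the literal inclusion $L_q\subseteq\llb lq,\maxrho_{lq}\rrb$ can fail. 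The correct move is to weaken the claimed inclusion to the AP (which is all that is needed), matching the paper's proof.
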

\begin{proof}
Let $L_0 \in \mathscr{L}$ and $l \in \mathbb{N}_0$ such that $\{l, l+d\} = L_0 \cap \llb l, l+d \rrb$.

\ref{it:arbitrary_long_APs(i)} Using that $\mathscr{L}$ is a directed family, we see that, for every $q \in \mathbb{N}$, there is a set $L \in \LLc$ such that
$lq + d \cdot \llb 0, q \rrb = q \fixed[0.25]{\text{ }} \{l, l + d\} \subseteq q\fixed[0.1]{\text{ }} L_0 \subseteq L \subseteq \UUc_{lq}$. The rest is obvious.

\ref{it:arbitrary_long_APs(ii)} Let $k$ be an integer $\ge lq+1$. Clearly, there exists $L^\prime \in \LLc$ such that $k-lq \in L^\prime$. On the other hand, it follows from the proof of point \ref{it:arbitrary_long_APs(i)} that there is $L^\pprime \in \LLc$ with $lq + d \cdot \llb 0, q  \rrb \subseteq L^\pprime$. Thus, we have that $k + d \cdot \llb 0, q \rrb \subseteq L^\prime + L^\pprime \subseteq L \subseteq \UUc_k$ for some $L \in \LLc$, and the rest is clear.
\end{proof}
\begin{lemma}
\label{lem_FrGe}
Suppose    $\Delta$ is finite non-empty, set $\delta = \min \Delta$, and let $k \in \N$.
\begin{enumerate} [label={\rm (\arabic{*})}]
\item\label{it:lemma_FrGe(i)} $\UUc_k \subseteq k + \delta \cdot \ZZb$ and $\Delta (\UUc_k) \subseteq \delta \cdot \NNp$.
\item\label{it:lemma_FrGe(ii)} $\sup \Delta(\UUc_k) \le \sup \Delta$.
\end{enumerate}
\end{lemma}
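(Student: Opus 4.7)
For part \ref{it:lemma_FrGe(i)}, the plan is to combine the definition of $\UUc_k$ with Proposition \ref{prop:delta_sets}, which yields $\gcd \Delta = \delta$ and hence that $\delta$ divides every element of $\Delta$. Given $h \in \UUc_k$, pick $L \in \LLc$ with $\{k, h\} \subseteq L$, and without loss of generality assume $k \le h$. Listing the finite set $L \cap \llb k, h \rrb$ in increasing order as $k = l_0 < l_1 < \ldots < l_n = h$, each pair $\{l_i, l_{i+1}\}$ is consecutive in $L$, so $l_{i+1} - l_i \in \Delta(L) \subseteq \Delta$; summing then gives $\delta \mid h - k$, i.e., $h \in k + \delta \cdot \ZZb$. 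The inclusion $\Delta(\UUc_k) \subseteq \delta \cdot \NNp$ is then immediate, since any distance of $\UUc_k$ is the positive difference of two elements of the coset $k + \delta \cdot \ZZb$.

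For part \ref{it:lemma_FrGe(ii)}, set $d := \sup \Delta$ and argue by contradiction: suppose there is $d^\prime \in \Delta(\UUc_k)$ with $d^\prime > d$. By definition of a distance, one can pick $h_1 < h_2$ in $\UUc_k$ with $h_2 - h_1 = d^\prime$ and $\llb h_1+1, h_2-1 \rrb \cap \UUc_k = \emptyset$. Since $k \in \UUc_k$, the integer $k$ does not lie strictly between $h_1$ and $h_2$, so either $k \le h_1$ or $k \ge h_2$.

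In the first case, choose $L_2 \in \LLc_k$ with $h_2 \in L_2$. Because $k \in L_2$ and $k \le h_1 < h_2$, the set $L_2$ has at least one element strictly below $h_2$, so the immediate predecessor $l$ of $h_2$ in $L_2$ exists and satisfies $h_2 - l \in \Delta(L_2) \subseteq \Delta$, hence $l \ge h_2 - d > h_1$. Thus $l \in \llb h_1+1, h_2-1 \rrb \cap L_2 \subseteq \llb h_1+1, h_2-1 \rrb \cap \UUc_k$, a contradiction. The case $k \ge h_2$ is symmetric: choose $L_1 \in \LLc_k$ with $h_1 \in L_1$ and observe that the immediate successor of $h_1$ in $L_1$ sits in $\llb h_1+1, h_1+d \rrb \subseteq \llb h_1+1, h_2-1 \rrb$, again forcing an element of $\UUc_k$ into the forbidden gap.

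I do not anticipate any substantial obstacle. Both parts reduce to short arguments about consecutive pairs in a single $L \in \LLc$, exploiting the fact that $L \cap \llb a, b \rrb$ is automatically finite. The only nontrivial external input is Proposition \ref{prop:delta_sets}, which converts $\delta = \min \Delta$ into $\delta = \gcd \Delta$ and makes the divisibility step in part \ref{it:lemma_FrGe(i)} work cleanly; part \ref{it:lemma_FrGe(ii)} is then essentially bookkeeping with the definition of $\Delta(L)$.
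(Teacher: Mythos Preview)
Your proof is correct and follows essentially the same approach as the paper's: both parts invoke Proposition~\ref{prop:delta_sets} for the divisibility in \ref{it:lemma_FrGe(i)} and then argue \ref{it:lemma_FrGe(ii)} by contradiction via a predecessor/successor in a single $L \in \LLc_k$. The only cosmetic difference is that you split cases according to whether $k \le h_1$ or $k \ge h_2$, whereas the paper splits on whether $\min L_2 = l_2$; the two organizations yield the same predecessor/successor argument.
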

\begin{proof}
\ref{it:lemma_FrGe(i)} Proposition \ref{prop:delta_sets} implies that, for every $l \in \mathscr{U}_k$, $l-k$ is a multiple of $\delta$, and this yields that $\UUc_k \subseteq k + \delta \cdot \ZZb$. The rest is obvious.

\ref{it:lemma_FrGe(ii)} Assume to the contrary that $\sup \Delta(\UUc_k) > \sup \Delta$. Then $\UUc_k \ne \emptyset$ and there exist $l_1, l_2 \in \NNb$ with $l_2 - l_1 > \sup \Delta$ and $\{l_1, l_2\} = \UUc_k \cap \llb l_1, l_2 \rrb$. Accordingly, there also exist $L_1, L_2 \in \LLc_k$ such that $l_1 \in L_1$ and $l_2 \in L_2$. We distinguish two cases.
\vskip 0.1cm
\textsc{Case 1:} $\min L_2 = l_2$.

Then $l_1 < l_2 \le k$, and since $l_1, k \in L_1$, there exists $m _1\in L_1$ with $l_1 < m_1$ and $\llb l_1, m_1 \rrb \cap L_1 = \{l_1, m_1\}$. But $L_1 \subseteq \UUc_k$, so $l_2 \le m_1$, and hence $l_2 - l_1 \le m_1 - l_1 \in \Delta(L_1) \subseteq \Delta$, viz. $l_2 - l_1 \le \sup \Delta$, a contradiction.
\vskip 0.1cm
\textsc{Case 2:} $\min L_2 < l_2$.

Then there is $m_2 \in L_2$ with $m_2 < l_2$ and $\llb m_2, l_2 \rrb \cap L_2 = \{m_2, l_2\}$. But $L_2 \subseteq \UUc_k$, so we must have that $m_2 \le l_1$, and hence $l_2 - l_1 \le l_2 - m_2 \in \Delta(L_2) \subseteq \Delta$, which is still a contradiction.
\end{proof}
\begin{lemma}
\label{lem:reduction_to_the_upper_part}
Suppose   $\Delta$ is finite non-empty and let $d \in \NNp$. Then the following statements are equivalent{\rm \,:}
\begin{enumerate}[label={\rm (\alph{*})}]
\item\label{it:lem:reduction_to_the_upper_part(i)} There exists $M_{(a)} \in \NNb$ such that $\UUc_k$ is an AAP with difference $d$ and bound $M_{(a)}$ for all sufficiently large $k$.
\item\label{it:lem:reduction_to_the_upper_part(ii)} There is $M_{(b)} \in \NNb$ such that $\UUc_k \cap \bigl\llb k - M_{(b)}, \maxrho_k \bigr\rrb$ is an AAP with difference $d$ and bound $M_{(b)}$ for all sufficiently large $k$.
\item\label{it:lem:reduction_to_the_upper_part(iii)} There exists $M_{(c)} \in \NNb$ such that $\UUc_k \cap \bigl\llb k, \maxrho_k - M_{(c)} \bigr\rrb$ is an AP with difference $d$ for all sufficiently large $k$.
\end{enumerate}
In addition, if at least one of conditions \ref{it:lem:reduction_to_the_upper_part(i)}-\ref{it:lem:reduction_to_the_upper_part(iii)} is satisfied, then $d = \min \Delta$.
\end{lemma}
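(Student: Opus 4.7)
My plan is to close the cycle \ref{it:lem:reduction_to_the_upper_part(i)} $\Rightarrow$ \ref{it:lem:reduction_to_the_upper_part(ii)} $\Rightarrow$ \ref{it:lem:reduction_to_the_upper_part(iii)} $\Rightarrow$ \ref{it:lem:reduction_to_the_upper_part(i)} and, along the way, to deduce $d = \delta$ with $\delta := \min \Delta$. The direction $\delta \mid d$ is immediate in all three cases from Lemma \ref{lem_FrGe}\ref{it:lemma_FrGe(i)}: the AP (or AAP-middle) of difference $d$ lies inside $\UUc_k \subseteq k + \delta \ZZb$, so any two consecutive AP-elements differ by a multiple of $\delta$. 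For the converse $d \mid \delta$, I would invoke Lemma \ref{lem:arbitrary_long_APs}\ref{it:arbitrary_long_APs(ii)}, which places $q+1$ consecutive terms $k, k + \delta, \dots, k + q\delta$ of $k + \delta \ZZb$ inside $\UUc_k$ for any $q \in \NNp$ and all large $k$. Taking $q$ large enough and using Proposition \ref{prop:limits} ($k - \lambda_k,\, \maxrho_k - k \to \infty$), two of these consecutive terms land inside the AP part of the condition---the set itself in \ref{it:lem:reduction_to_the_upper_part(iii)}, or the AAP-middle in \ref{it:lem:reduction_to_the_upper_part(i)} and \ref{it:lem:reduction_to_the_upper_part(ii)}---so they differ by $\delta$ and belong to an AP of difference $d$, forcing $d \mid \delta$.

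The implications \ref{it:lem:reduction_to_the_upper_part(i)} $\Rightarrow$ \ref{it:lem:reduction_to_the_upper_part(ii)} and \ref{it:lem:reduction_to_the_upper_part(iii)} $\Rightarrow$ \ref{it:lem:reduction_to_the_upper_part(ii)} are routine with $M_{(b)} := M_{(a)}$ and $M_{(b)} := M_{(c)}$ respectively. Setting $V_k := \UUc_k \cap \llb k - M_{(b)}, \maxrho_k\rrb$, the middle $V_k \cap \llb \inf V_k + M_{(b)}, \maxrho_k - M_{(b)}\rrb$ sits inside either the AAP-middle of $\UUc_k$ (using $\lambda_k + M_{(a)} \le k$ for large $k$ by Proposition~\ref{prop:limits}) or $\UUc_k \cap \llb k, \maxrho_k - M_{(c)}\rrb$, and is thus a consecutive sub-interval of an AP of difference $d$, hence itself such an AP; non-emptiness for large $k$ is clear.

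For \ref{it:lem:reduction_to_the_upper_part(ii)} $\Rightarrow$ \ref{it:lem:reduction_to_the_upper_part(iii)}, I would write the middle of $V_k$ as $\{a_k, a_k + d, \dots, b_k\}$, noting $a_k \in \llb k, k + M_{(b)} + d - 1\rrb$. Using $d = \delta$, Lemma~\ref{lem:arbitrary_long_APs}\ref{it:arbitrary_long_APs(ii)} (applied with $q := \lceil M_{(b)}/\delta\rceil + 1$) fills the lower gap by placing every element of $k + \delta \ZZb \cap \llb k, a_k - \delta\rrb$ into $\UUc_k$; the same Lemma forces $b_k \ge \maxrho_k - M_{(b)} - \delta + 1$, because no element of $k + \delta \ZZb$ inside the middle range can be missed by the AP. Hence $\UUc_k \cap \llb k, b_k\rrb = \{k, k+\delta, \dots, b_k\}$, and the choice $M_{(c)} := M_{(b)} + \delta - 1$ ensures $\maxrho_k - M_{(c)} \le b_k$, so $\UUc_k \cap \llb k, \maxrho_k - M_{(c)}\rrb$ is a prefix of this AP.

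The hard part will be \ref{it:lem:reduction_to_the_upper_part(iii)} $\Rightarrow$ \ref{it:lem:reduction_to_the_upper_part(i)}, where the main obstacle is imposing AP-structure on the lower tail $L_k := \UUc_k \cap \llb \lambda_k, k-1\rrb$, which \ref{it:lem:reduction_to_the_upper_part(iii)} does not directly address. My approach is to exploit the symmetry $h \in \UUc_k \Leftrightarrow k \in \UUc_h$ (Lemma~\ref{prop:basic(1)}\ref{it:prop:basic(1)(i)}) and partition $L_k = G_k \cup B_k$ with $G_k := \{h \in L_k : \maxrho_h \ge k + M_{(c)}\}$. Applying \ref{it:lem:reduction_to_the_upper_part(iii)} at index $h \in G_k$ places $k$ inside the AP of $\UUc_h$ starting at $h$, forcing $h = k - i\delta$ for some $i \ge 1$; the converse also holds, so $G_k$ is an initial segment of $\{k-\delta, k-2\delta, \dots\}$ (an AP of difference $\delta$). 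For $h \in B_k$, $\maxrho_h$ is confined to $\llb k, k + M_{(c)} - 1\rrb$, and Lemma~\ref{prop:basic(1)}\ref{it:prop:basic(1)(iii)} combined with $\maxrho_1 \ge 1$ makes $(\maxrho_h)$ strictly increasing; hence $|B_k| \le M_{(c)}$ and $B_k \subseteq \llb \lambda_k, \lambda_k + M_{(c)} - 1\rrb$. Setting $M_{(a)} := M_{(c)} + \delta$, the AAP-middle $\UUc_k \cap \llb \lambda_k + M_{(a)}, \maxrho_k - M_{(a)}\rrb$ avoids both $B_k$ and the at most $M_{(c)}$ exceptional elements in $\UUc_k \cap \llb \maxrho_k - M_{(c)} + 1, \maxrho_k\rrb$, so it equals a consecutive sub-interval of the AP $G_k \cup \{k\} \cup (\UUc_k \cap \llb k+\delta, \maxrho_k - M_{(c)}\rrb)$ of difference $\delta$. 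The case $\maxrho_k = \infty$ propagates by Lemma~\ref{prop:basic(1)}\ref{it:prop:basic(1)(iii)} and reduces to the simpler case $B_k = \emptyset$.
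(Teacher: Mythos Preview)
Your strategy for the key implication \ref{it:lem:reduction_to_the_upper_part(iii)} $\Rightarrow$ \ref{it:lem:reduction_to_the_upper_part(i)} is essentially the paper's: apply \ref{it:lem:reduction_to_the_upper_part(iii)} at a smaller index $h$ (your $G_k$, the paper's $m$) and use the symmetry $h \in \UUc_k \Leftrightarrow k \in \UUc_h$. Your $G_k/B_k$ partition is a pleasant repackaging, and the bound $B_k \subseteq \llb \lambda_k, \lambda_k + M_{(c)} - 1\rrb$ via strict monotonicity of $(\rho_h)_{h \ge 1}$ is correct and amounts to the paper's chain $k \le \rho_{\lambda_k} \le \rho_{m - M_{(a)}}$. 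Two steps, however, are not justified as written.

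In \ref{it:lem:reduction_to_the_upper_part(iii)} $\Rightarrow$ \ref{it:lem:reduction_to_the_upper_part(i)}, your ``converse'' for $G_k$ requires applying \ref{it:lem:reduction_to_the_upper_part(iii)} at index $h$, but \ref{it:lem:reduction_to_the_upper_part(iii)} is only asserted for $h$ above some threshold $k_c$. Your choice $M_{(a)} = M_{(c)} + \delta$ guarantees merely $h \ge \lambda_k + M_{(a)}$, and $\lambda_k$ need not tend to $\infty$ (for instance, if $\NNp \in \LLc$ then $\lambda_k = 1$ for all $k \ge 1$), so $h$ may fall below $k_c$. The paper absorbs the threshold by setting $M_{(a)} = \max\{k_c, M_{(c)}\}$, so that $h \ge \lambda_k + M_{(a)} \ge k_c$ automatically. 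Second, in \ref{it:lem:reduction_to_the_upper_part(ii)} $\Rightarrow$ \ref{it:lem:reduction_to_the_upper_part(iii)}, the inequality $b_k \ge \rho_k - M_{(b)} - \delta + 1$ does not follow from Lemma~\ref{lem:arbitrary_long_APs}: the AAP condition says that $V_k \cap \llb \inf V_k + M_{(b)}, \rho_k - M_{(b)}\rrb$ is an AP, \emph{not} that every point of $k + \delta\ZZb$ in that window belongs to $V_k$, so nothing you have cited rules out a large gap in $V_k$ just below $\rho_k$. The missing ingredient is Lemma~\ref{lem_FrGe}\ref{it:lemma_FrGe(ii)} (consecutive elements of $\UUc_k$ differ by at most $\max\Delta$), which yields $b_k \ge \rho_k - M_{(b)} - \max\Delta + 1$; taking $M_{(c)} = M_{(b)} + \max\Delta - 1$ then suffices. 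Both repairs are local.
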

\begin{proof}
Set $\delta = \min \Delta$ for brevity's sake. Given $M \in \NNb$, we have by Proposition \ref{prop:limits} and Lemma \ref{lem:arbitrary_long_APs}\ref{it:arbitrary_long_APs(ii)} that there exists $k_M \in \N$ such that
\begin{equation}
\label{equ:some_conditions}
k \ge \lambda_k + \max \{\delta, M\}
\quad \text{and}\quad
k + \delta \cdot \llb 0, M+1 \rrb \subseteq \UUc_k \cap \llb k, \maxrho_k \rrb
\quad \text{for }k \ge k_M.
\end{equation}
It is thus clear that if at least one of conditions \ref{it:lem:reduction_to_the_upper_part(i)}-\ref{it:lem:reduction_to_the_upper_part(iii)} is verified, then $d = \delta$, and this in turn makes it plain that \ref{it:lem:reduction_to_the_upper_part(ii)} $\Leftrightarrow$ \ref{it:lem:reduction_to_the_upper_part(iii)}. Therefore it is enough to prove that \ref{it:lem:reduction_to_the_upper_part(i)} $\Leftrightarrow$ \ref{it:lem:reduction_to_the_upper_part(iii)} with $d = \delta$.
\vskip 0.1cm
\ref{it:lem:reduction_to_the_upper_part(i)} $\Rightarrow$ \ref{it:lem:reduction_to_the_upper_part(iii)}. Let $M_{(a)} \in \NNb$ and $k_\ast \in \N$ such that, for every $k \ge k_\ast$, $\UUc_k$ is an AAP with difference $\delta$ and bound $M_{(a)}$. Also, let $k_a \in \N$ such that \eqref{equ:some_conditions} holds with $M = M_{(a)}$ and $k_M = k_a$.

Then, for each $k \ge \max \{k_\ast, k_a\}$ we have that $\UUc_k^\ast = \UUc_k \cap \bigl\llb \lambda_k + M_{(a)}, \maxrho_k - M_{(a)} \bigr\rrb$ is an AP with difference $\delta$ and, in addition, $k \in \UUc_k^\ast$. It is then evident that, for all sufficiently large $k$, also $\UUc_k \cap \bigl\llb k, \maxrho_k - M_{(a)} \bigr\rrb$ is an AP with difference $\delta$.
\vskip 0.1cm
\ref{it:lem:reduction_to_the_upper_part(iii)} $\Rightarrow$ \ref{it:lem:reduction_to_the_upper_part(i)}. Let $M_{(c)}  \in \NNb$ and $k_c \in \N$ be such that $\UUc_k \cap \bigl\llb k, \maxrho_k - M_{(c)} \bigr\rrb$ is an AP with difference $\delta$ for every $k \ge k_c$. Then set $M_{(a)} = \max \bigl\{k_c, M_{(c)}\bigr\}$ and let $k_a \in \mathbb N$ such that \eqref{equ:some_conditions} is satisfied with $M = M_{(a)}$ and $k_M = k_a$.

Pick $k \ge \max \{k_a, k_c \}$ and
$m \in \bigl\llb \lambda_k + M_{(a)}, k \bigr\rrb$ such that $k-m$ is a multiple of $\delta$.
Clearly $k \le \maxrho_{\lambda_k}$ and $\lambda_k \le m - M_{(a)}$. So it follows from Lemma \ref{prop:basic(1)}\ref{it:prop:basic(1)(iii)} that $k \le \maxrho_{\lambda_k} \le \maxrho_{m-M_{(a)}}$, and hence $k \le \maxrho_m - M_{(a)}$, because
$$
k + M_{(a)} \le \maxrho_{m - M_{(a)}} + M_{(a)} \le \maxrho_{m - M_{(a)}} + \maxrho_{M_{(a)}}
 \le \maxrho_m.
$$
On the other hand, $k \in m + \delta \cdot \NNb$ and
$\UUc_m \cap \bigl\llb m, \maxrho_m - M_{(a)} \bigr\rrb$ is an AP with difference $\delta$, since $m \ge \lambda_k + M_{(a)} \ge k_c$ and $M_{(a)} \ge M_{(c)}$, with the result that $\bigl\llb k, \maxrho_k - M_{(a)} \bigr\rrb \subseteq \bigl\llb k, \maxrho_k - M_{(c)} \bigr\rrb$. Putting it all together, we thus see that $k \in \UUc_m$, and hence $m \in \UUc_k$ by Lemma \ref{prop:basic(1)}\ref{it:prop:basic(1)(i)}.

So, by the arbitrariness of $m \in \bigl\llb \lambda_k + M_{(a)}, k \bigr\rrb$ and the above, we find that
$$
\bigl\llb \lambda_k + M_{(a)}, \maxrho_k - M_{(a)} \bigr\rrb \cap (k + \delta \cdot \ZZb) = \UUc_k \cap \bigl\llb \lambda_k + M_{(a)}, \maxrho_k - M_{(a)} \bigr\rrb,
$$
and this implies that $\UUc_k$ is an AAP with difference $\delta$ and bound $M_{(a)}$.
\end{proof}
\begin{lemma}
\label{lem:AP_reduction_to_the_upper_part}
Suppose  $\LLc$ has elasticity $\rho < \infty$ and  $\Delta$ is finite non-empty.
\begin{enumerate}[label={\rm (\arabic{*})}]
\item\label{item:AP_reduction(i)} We have $\lambda_k \ge \rho^{-1} k > 0$ for every $k \ge 1 + \rho \max \Delta$.

\item\label{item:AP_reduction(ii)} For $d \in \NNp$,  the following statements are equivalent:
\begin{enumerate}[label={\rm (\alph{*})}, leftmargin=2.4\parindent]
\item\label{it:lem:AP_reduction_to_the_upper_part(i)} $\UUc_k$ is an AP with difference $d$ for all large $k$.
\item\label{it:lem:AP_reduction_to_the_upper_part(ii)} $\UUc_k \cap \bigl\llb k, \maxrho_k \bigr\rrb$ is an AP with difference $d$ for all large $k$.
\end{enumerate}
In addition, if either of conditions \ref{it:lem:AP_reduction_to_the_upper_part(i)} and \ref{it:lem:AP_reduction_to_the_upper_part(ii)} holds, then $d = \min \Delta$.
\end{enumerate}
\end{lemma}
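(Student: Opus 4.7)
\textbf{Part (1)} is immediate from Proposition \ref{prop:limits}: it gives $\lambda_k/k \ge 1/\rho$ for every $k \in \NNb$, and since $\rho < \infty$ (hence $\rho > 0$) while $k \ge 1 + \rho \max \Delta \ge 1$, we conclude $\lambda_k \ge k/\rho > 0$.

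For \textbf{Part (2)}, the direction \ref{it:lem:AP_reduction_to_the_upper_part(i)} $\Rightarrow$ \ref{it:lem:AP_reduction_to_the_upper_part(ii)} is trivial: restricting an AP of difference $d$ to a discrete interval gives an AP of the same difference, and non-emptiness is guaranteed by $k \in \UUc_k \cap \llb k, \maxrho_k \rrb$. For the converse, I first plan to show $d = \min \Delta =: \delta$. The inclusion $\UUc_k \subseteq k + \delta \cdot \ZZb$ of Lemma \ref{lem_FrGe}\ref{it:lemma_FrGe(i)} yields $\delta \mid d$; conversely, Lemma \ref{lem:arbitrary_long_APs}\ref{it:arbitrary_long_APs(ii)} applied with $d = \delta$ and $q = 1$ places $k + \delta$ in $\UUc_k \cap \llb k, \maxrho_k \rrb$ for all large $k$, forcing $d \mid \delta$.

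The core task is then to propagate the AP structure from the upper part $\UUc_k \cap \llb k, \maxrho_k \rrb$ down to the lower part $\UUc_k \cap \llb \lambda_k, k \rrb$. Let $k_0$ be a threshold beyond which \ref{it:lem:AP_reduction_to_the_upper_part(ii)} holds; by Part (1), since $\lambda_k \ge k/\rho \to \infty$, I pick $k_1 \ge k_0$ with $\lambda_k \ge k_0$ for every $k \ge k_1$. For such $k$ and any $j \in \llb \lambda_k, k \rrb$, I plan to exploit the reflexivity $j \in \UUc_k \iff k \in \UUc_j$ of Lemma \ref{prop:basic(1)}\ref{it:prop:basic(1)(i)}: since $j \ge k_0$, the set $\UUc_j \cap \llb j, \maxrho_j \rrb$ is an AP of difference $\delta$ starting at $j$, and the monotonicity $\maxrho_{h+1} \ge \maxrho_h + \maxrho_1 \ge \maxrho_h + 1$ (from Lemma \ref{prop:basic(1)}\ref{it:prop:basic(1)(iii)}) combined with $\maxrho_{\lambda_k} \ge k$ (from $\lambda_k \in \UUc_k$ via the same reflexivity) gives $\maxrho_j \ge k$; hence $j \in \UUc_k$ iff $j \equiv k \pmod \delta$. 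Combining with the hypothesis on the upper part yields $\UUc_k = \llb \lambda_k, \maxrho_k \rrb \cap (k + \delta \cdot \ZZb)$, an AP of difference $\delta = d$.

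The main obstacle is precisely this propagation step: the upper-AP hypothesis gives no direct information about elements of $\UUc_k$ lying below $k$, and it is the interplay between the reflexivity of Lemma \ref{prop:basic(1)}\ref{it:prop:basic(1)(i)} and the growth of $\lambda_k$ from Part (1) that allows one to remove the bound $M$ still present in the weaker Lemma \ref{lem:reduction_to_the_upper_part}.
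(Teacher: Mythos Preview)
Your proof is correct and follows essentially the same route as the paper for Part~(2): both arguments reduce to showing that for large $k$ and any $m\in\llb\lambda_k,k\rrb$ with $m\equiv k\pmod\delta$, one has $k\in\UUc_m$ (using that $m\ge k_0$ so the upper-AP hypothesis applies to $\UUc_m$, and that $\rho_m\ge\rho_{\lambda_k}\ge k$), whence $m\in\UUc_k$ by the reflexivity of Lemma~\ref{prop:basic(1)}\ref{it:prop:basic(1)(i)}. The only real difference is in Part~(1): you invoke Proposition~\ref{prop:limits} directly, which yields the stronger bound $\lambda_k\ge k/\rho$ for \emph{every} $k\ge 1$ (so the threshold $1+\rho\max\Delta$ is not actually needed), whereas the paper argues from scratch---picking $L\in\LLc$ with $\lambda_k,k\in L$ and using the size of $k$ to rule out $0\in L$---and therefore does need that threshold. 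Your shortcut is cleaner; the paper's argument is more self-contained.
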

\begin{proof}
\ref{item:AP_reduction(i)} Let $k \ge 1 + \rho \max \Delta$. There then exists $L \in \LLc$ such that $\lambda_k, k \in L$, and we have $k \le \sup L \le \rho \min L^+$, that is $\min L^+ \ge \rho^{-1} k > \max \Delta$. It follows that $L^+ = L$, and hence $\lambda_k = \min L^+ \ge \rho^{-1} k$, since $0 \in L$ would imply that $\max \Delta \ge \min L^+ > \max \Delta$, which is impossible.

\smallskip
\ref{item:AP_reduction(ii)} We set  $\delta = \min \Delta$. The last part of the claim (``In addition, if etc.'') is clear from Lemma \ref{lem:reduction_to_the_upper_part}, and on the other hand it is obvious that \ref{it:lem:AP_reduction_to_the_upper_part(i)} $\Rightarrow$ \ref{it:lem:AP_reduction_to_the_upper_part(ii)}. So we assume for the sequel that $d = \delta$ and we show that \ref{it:lem:AP_reduction_to_the_upper_part(ii)} $\Rightarrow$ \ref{it:lem:AP_reduction_to_the_upper_part(i)}.

To start with, let $k_0^\prime \in \NNp$ such that $\lambda_k \ge \rho^{-1} k$ for $k \ge k_0^\prime$, which is possible by \ref{item:AP_reduction(i)}. Then let $k_0^\pprime \in \NNb$ such that $\UUc_k \cap \llb k, \maxrho_k \rrb$ is an AP for $k \ge k_0^\pprime$, and pick $k \ge \max \{k_0^\prime, \rho k_0^\pprime \}$. Since $k \in \UUc_k$, it is enough to prove that $\UUc_k \cap \llb \lambda_k, k \rrb$ is an AP with
difference $\delta$.

To this end, let $m \in \llb \lambda_k, k \rrb$ such that $k - m$ is a multiple of $\delta$. Then $k_0^\pprime \le \rho^{-1} k \le \lambda_k$, and hence $\UUc_m \cap \llb m, \rho_m \rrb$ is an AP with difference $\delta$. But $m \le k \le \maxrho_{\lambda_k} \le \maxrho_m$ and $k \in m + \delta \cdot \NNb$, so $k \in \UUc_m$, and this implies that $m \in \UUc_k$ by Lemma \ref{prop:basic(1)}\ref{it:prop:basic(1)(i)}.
\end{proof}

\begin{proof}[Proof of Theorem \ref{th:structure_theorem}]~

\smallskip
\ref{it:main(1)} Let $k$ be an arbitrary integer $\ge lq + 1$.
\vskip 0.1cm
\ref{it:th:structure_theorem(i)} $\Rightarrow$ \ref{it:th:structure_theorem(ii)}. By hypothesis, there are $M \in \NNb$ and $k_0 \in \NNp$ such that $\UUc_k \cap \llb \lambda_k + M, \maxrho_k - M \rrb$ is an AP with difference $\delta$ for all $k \ge k_0$. On the other hand, we get from Proposition \ref{prop:limits} that there also exists $k_\ast \in \NNp$ such that $k \ge \lambda_k + M$ and $\maxrho_{k-lq} \ge k$ for $k \ge k_\ast$. So, putting it all together, we see that $\UUc_k \cap \llb \maxrho_{k-lq} + lq, \maxrho_k - M \rrb$ is either empty or an AP with difference $\delta$ for all sufficiently large $k \in \NNp$.
\vskip 0.1cm
\ref{it:th:structure_theorem(ii)} $\Rightarrow$ \ref{it:th:structure_theorem(i)}.
We know from Proposition \ref{prop:delta_sets} that $\delta = \gcd \Delta$ and $\Delta \subseteq \delta \cdot \NNp$. In particular, this gives that $\UUc_k \subseteq \lambda_k + \delta \cdot \NNb$ and $q$ is a positive integer, since $q = \frac{1}{\delta} \max \Delta$ and $\max \Delta$ is a multiple of $\delta$. Moreover, $\LLc$ being a directed family implies that
\begin{equation}
\label{equ:inclusion(1)}
\UUc^* := lq + \delta \cdot \fixed[-0.1]{\text{ }} \llb 0, q \rrb
\subseteq \UUc_{lq}.
\end{equation}
By Lemma \ref{lem:reduction_to_the_upper_part}, it is enough to show that there exists $k^\ast \in \mathbb N$ such that the set $\UUc_k \cap \llb k, \maxrho_k - M \rrb$ is an AP with difference $\delta$ for $k \ge k^\ast$.
To this end, let
$$
\UUc^\prime = \UUc_{lq} \cap \llb \lambda_{lq}, lq-1 \rrb
\quad\text{and}\quad
\UUc^\pprime = \UUc_{lq} \cap \llb (l+\delta)q+1,\maxrho_{lq} \rrb,
$$
and note that $\UUc^\prime$, $\UUc^\ast$, and $\UUc^\pprime$ are pairwise disjoint and
$\UUc_{lq} = \UUc^\prime \cup \UUc^* \cup \UUc^\pprime$.
It is evident that $k \in \UUc^* + \UUc_{k-lq}$, and Lemma \ref{prop:basic(1)}\ref{it:prop:basic(1)(ii)} gives
\begin{equation}
\label{equ:inclusion(2)}
\UUc_{lq} + \UUc_{k-lq}
    = \fixed[-0.2]{\text{ }} (\UUc^\prime + \UUc_{k-lq} ) \cup (\UUc^\ast + \UUc_{k-lq}) \cup (\UUc^\pprime + \UUc_{k-lq} ) \subseteq \UUc_k.
\end{equation}
On the other hand, we have by Lemma \ref{lem_FrGe} that $\max
\Delta(\UUc_{k-lq}) \le \max \Delta$
and $\Delta(\UUc_{k-lq}) \subseteq \delta \cdot \NNp$.

It follows that $\UUc^* + \UUc_{k-lq}$ is an AP with difference $\delta$, and it is straightforward that $\sup(\UUc^* + \UUc_{k-lq}) = lq + \delta q + \maxrho_{k-lq}$. Thus, if we let $k^\prime \in \NNb$ such that $\rho_k \ge M + k$ for $k \ge k^\prime$ (this is possible by Proposition \ref{prop:limits}) and we assume that there exists $k^\pprime \in \mathbb N$ such that $\UUc_k \cap \llb \maxrho_{k-lq} + lq, \maxrho_k - M \rrb$ is either empty or an AP with difference $\delta$ for $k \ge k^\pprime$, then it becomes clear from the above that $\UUc_k \cap \llb k, \maxrho_k - M \rrb$ is an AP with difference $\delta$ for $k \ge \max(k^\prime, k^\pprime)$.

\medskip
\ref{it:main(2)} Note that $\mathbb N \cap L^\ast \ne \emptyset$, and set $n = \min (\mathbb N \cap L^\ast)$. Since $\rho < \infty$, we have by Proposition \ref{prop:accepter_elasticity} that
\begin{equation}
\label{equ:rescaling}
\sup L^\ast = \rho_n
\quad\text{and}\quad
\rho_{nk} = nk\rho\quad\text{for all}\quad k \in \mathbb N.
\end{equation}
Next, let $r$ be the smallest integer $> \rho \max \Delta$ such that $r \rho \in \NNp$ and $nr(\rho^2 - 1) \ge \frac{1}{\delta} \max \Delta$, which exists because $\rho$ is a rational number $> 1$.
We split the sequel of the proof into a series of claims.
\begin{claim}
\label{claim2}
Let $k$ be an integer $> \frac{1}{n} \rho \max \Delta$, and suppose that $kL^\ast \subseteq L$ for some $L \in \mathscr{L}$. Then $L = kL^\ast$ and $L$ is an AP with difference $\delta$, $\min L = nk$, and $\sup L = nk \rho$. In particular, $0 \notin L^\ast$.
\end{claim}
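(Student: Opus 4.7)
My plan is to squeeze $L$ between $kL^\ast$ and $kL^\ast$: I bound $\sup L$ from above using the elasticity identity $\rho_{nk} = nk\rho$ supplied by \eqref{equ:rescaling}, bound $\min L^+$ from below using $\rho(L) \le \rho$, and use the numerical hypothesis $nk > \rho \max \Delta$ to forbid $0 \in L$ via a gap argument. Once both endpoints of $L$ are fixed, the congruence $L \subseteq \UUc_{nk} \subseteq nk + \delta \cdot \ZZb$ coming from Lemma \ref{lem_FrGe}\ref{it:lemma_FrGe(i)} pins down $L$ on the nose.

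First I would record that, because $L^\ast$ is an AP with difference $\delta$ whose smallest positive element is $n$ and whose elasticity is $\rho$, we have $\sup L^\ast = n\rho$, and therefore $kL^\ast = L^\ast + \cdots + L^\ast$ has maximum $nk\rho$ and contains the element $nk = n + \cdots + n$. Since $nk \in kL^\ast \subseteq L$, it follows that $L \in \LLc_{nk}$, hence $L \subseteq \UUc_{nk}$, and \eqref{equ:rescaling} yields $\sup L \le \rho_{nk} = nk\rho$; the reverse inequality is immediate, so $\sup L = nk\rho$. The elasticity bound $\rho(L) \le \rho$ now gives $\min L^+ \ge \sup L/\rho = nk$, and since $nk \in L^+$, in fact $\min L^+ = nk$.

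For the gap step, suppose $0 \in L$: then $0$ and $nk$ are consecutive elements of $L$, so $nk \in \Delta(L) \subseteq \Delta$, which contradicts $nk > \rho \max \Delta \ge \max \Delta$. Thus $0 \notin L$ and $\min L = nk$. Because $kL^\ast \subseteq L$, this already gives $0 \notin kL^\ast$, which forces $0 \notin L^\ast$ (else $0 + \cdots + 0 = 0$ would sit in $kL^\ast$); consequently $kL^\ast = \{nk, nk+\delta, \ldots, nk\rho\}$. For the reverse inclusion $L \subseteq kL^\ast$, every $\ell \in L$ satisfies $nk \le \ell \le nk\rho$ with $\ell - nk \in \delta \cdot \ZZb$ by Lemma \ref{lem_FrGe}\ref{it:lemma_FrGe(i)}, so $\ell \in kL^\ast$. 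Combined with $kL^\ast \subseteq L$, this yields $L = kL^\ast$, which is patently an AP with difference $\delta$.

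The single delicate point is the gap argument ruling out $0 \in L$; the hypothesis $k > \frac{1}{n}\rho \max \Delta$ is tuned precisely so that $nk$ exceeds every possible distance in $\Delta$, after which all remaining conclusions drop out from endpoint bookkeeping combined with the $\delta$-periodicity of $\UUc_{nk}$.
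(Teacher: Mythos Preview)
Your argument is correct and follows essentially the same route as the paper's proof: pin down $\sup L$ and $\min L$ via the elasticity identity $\rho_{nk}=nk\rho$ and the bound $\rho(L)\le\rho$, exclude $0$ by the gap argument (which is exactly the content of Lemma~\ref{lem:AP_reduction_to_the_upper_part}\ref{item:AP_reduction(i)}, just inlined), and then use the $\delta$-periodicity from Lemma~\ref{lem_FrGe}\ref{it:lemma_FrGe(i)} to force $L=kL^\ast$. The only differences are cosmetic: the paper cites Lemma~\ref{lem:AP_reduction_to_the_upper_part}\ref{item:AP_reduction(i)} to get $\min L\ge 1$ up front and then runs a single elasticity sandwich $\sup L/\min L\le\rho\le\sup L/\min L$ to fix both endpoints at once, whereas you determine $\sup L$ first and $\min L^+$ second; and you make the final inclusion $L\subseteq kL^\ast$ explicit via Lemma~\ref{lem_FrGe}, which the paper leaves implicit.
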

\begin{proof}[Proof of Claim \ref{claim2}]
Let $L \in \mathscr{L}$ such that $kL^\ast \subseteq L$. Then $nk \in L$, and since $nk > \rho \max \Delta$, we have by Lemma \ref{lem:AP_reduction_to_the_upper_part}\ref{item:AP_reduction(i)} that $\min L \ge 1$. On the other hand, $\rho < \infty$ implies by Proposition \ref{prop:limits} that $\maxrho_\kappa < \infty$ for all $\kappa \in \mathbb N$, whence every set in $\mathscr{L}$ is finite. Accordingly, $\min L \le nk$ and $k \sup L^\ast \le \sup L < \infty$, with the result that
$$
\frac{\sup(L)}{\min L} \le \rho = \rho(L^\ast) = \frac{1}{n}\sup L^\ast = \frac{k \sup L^\ast}{nk} \le \frac{\sup(L)}{\min L}.
$$
So we see that $\min L = nk = \min (kL^\ast)$ and $\sup L = k \sup L^\ast = \sup (kL^\ast)$. But $kL^\ast$ is the $k$-fold sumset of an AP with difference $\delta$, so it is itself an AP with difference $\delta$. It follows that $L = kL^\ast$, and hence $\sup L = nk \rho$ by \eqref{equ:rescaling}. The rest is clear.
\end{proof}
\begin{claim}
\label{claim3}
$\UUc_{nr\rho}$ is an AP with difference $\delta$, and in addition $\lambda_{nr\rho} = nr$ and $\rho_{nr\rho} = nr\rho^2$.
\end{claim}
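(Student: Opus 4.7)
My plan is to exhibit $\UUc_{nr\rho}$ explicitly as the union of two arithmetical progressions that come out of Claim \ref{claim2}, and to pin down its endpoints using \eqref{equ:rescaling} together with Lemma \ref{lem:AP_reduction_to_the_upper_part}\ref{item:AP_reduction(i)} and Lemma \ref{lem_FrGe}\ref{it:lemma_FrGe(i)}. Since $r > \rho \max \Delta$ and $n \ge 1$, both $r$ and $r\rho$ exceed $\tfrac{1}{n}\rho \max \Delta$, so Claim \ref{claim2} applies to each. Using the directedness of $\LLc$ iteratively from $L^\ast \in \LLc$, for $k \in \{r,\, r\rho\}$ there is some $L \in \LLc$ with $kL^\ast \subseteq L$; Claim \ref{claim2} then forces $L = kL^\ast \in \LLc$, with $kL^\ast$ an AP of difference $\delta$ having $\min(kL^\ast) = nk$ and $\sup(kL^\ast) = nk\rho$. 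Writing these out,
\[
rL^\ast = \llb nr, nr\rho \rrb \cap (nr + \delta \cdot \NNb)
\quad\text{and}\quad
r\rho L^\ast = \llb nr\rho, nr\rho^2 \rrb \cap (nr\rho + \delta \cdot \NNb),
\]
and since $nr\rho$ belongs to both, both sets lie in $\LLc_{nr\rho}$. Their union is $\llb nr, nr\rho^2 \rrb \cap (nr + \delta \cdot \NNb)$, an AP of difference $\delta$ sitting inside $\UUc_{nr\rho}$; this furnishes the lower inclusion.

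For the reverse inclusion I would bound the endpoints. By \eqref{equ:rescaling} applied with $k = r\rho \in \NNp$ one gets $\maxrho_{nr\rho} = n(r\rho)\rho = nr\rho^2$. Since $r$ is a positive integer with $r > \rho \max \Delta \ge \max \Delta$, we have $r \ge \max \Delta + 1$, whence $nr\rho \ge 1 + \rho \max \Delta$; Lemma \ref{lem:AP_reduction_to_the_upper_part}\ref{item:AP_reduction(i)} then yields $\lambda_{nr\rho} \ge \rho^{-1}\cdot nr\rho = nr$. Combining with Lemma \ref{lem_FrGe}\ref{it:lemma_FrGe(i)}, which forces $\UUc_{nr\rho} \subseteq nr\rho + \delta \cdot \ZZb = nr + \delta \cdot \ZZb$, we arrive at
$$
\UUc_{nr\rho} \subseteq \llb nr, nr\rho^2 \rrb \cap (nr + \delta \cdot \NNb),
$$
matching the lower inclusion. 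Therefore $\UUc_{nr\rho}$ is an AP with difference $\delta$, $\lambda_{nr\rho} = nr$, and $\maxrho_{nr\rho} = nr\rho^2$, as required.

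The only delicate points are the numerical checks $r,\, r\rho > \tfrac{1}{n}\rho \max \Delta$ and $nr\rho \ge 1 + \rho \max \Delta$, but both reduce to the integrality observation that $r$ is an integer strictly greater than $\rho \max \Delta \ge \max \Delta$ and hence at least $\max \Delta + 1$. Once these are in place, the argument is just bookkeeping: the two APs $rL^\ast$ and $r\rho L^\ast$ have the same common difference $\delta$ and overlap at $nr\rho$, so their union exhibits no gap, and the sandwich between the explicit lower set and the residue/endpoint upper bound gives equality.
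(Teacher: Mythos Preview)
Your proof is correct and follows essentially the same route as the paper: you build $\UUc_{nr\rho}$ from the two overlapping APs $rL^\ast$ and $r\rho L^\ast$ via Claim~\ref{claim2}, use \eqref{equ:rescaling} for the upper endpoint, and Lemma~\ref{lem_FrGe}\ref{it:lemma_FrGe(i)} for the residue constraint. The only difference is that for $\lambda_{nr\rho} \ge nr$ you invoke Lemma~\ref{lem:AP_reduction_to_the_upper_part}\ref{item:AP_reduction(i)} (after the numerical check $nr\rho \ge 1 + \rho\max\Delta$), whereas the paper argues this directly by contradiction with the definition of~$\rho$; since that lemma is already in hand, your version is a perfectly legitimate shortcut.
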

\begin{proof}[Proof of Claim \ref{claim3}]
Since $r\rho$ is a positive integer and $\LLc$ is a directed family, there exist $L_1, L_2 \in \LLc$ such that $r L^\ast \subseteq L_1$ and $r\rho L^\ast \subseteq L_2$, which yields by \eqref{equ:rescaling} and Claim \ref{claim2} that $L_1 = r L^\ast$ and $L_2 = r\rho L^\ast$, and both $L_1$ and $L_2$ are APs with difference $\delta$. So we get by \eqref{equ:rescaling} that
$$
\sup L_1 = r \sup L^\ast = r \rho_n = nr \rho = r \rho \min L^\ast = \min L_2,
$$
which yields $L_1 \cup L_2 \subseteq \UUc_{nr\rho}$. Since $L_1 \cup L_2$ is an AP with difference $\delta$ and $\UUc_{nr\rho} \subseteq nr\rho + \delta \cdot \ZZb$ by Lemma \ref{lem_FrGe}\ref{it:lemma_FrGe(i)}, it is then enough for completing the proof of the claim to show that
$$
\lambda_{nr\rho} = \min L_1 = r \min L^\ast = nr
\quad\text{and}\quad
\rho_{nr\rho} = \sup L_2 = r\rho \sup L^\ast = nr\rho^2,
$$
as in that case $L_1 \cup L_2 = \UUc_{nr\rho}$. Now, the latter equality is clear, because $\rho_{nr\rho} = nr\rho^2$ by \eqref{equ:rescaling}. As for the former, suppose for a contradiction that $\min \UUc_{nr\rho} < nr$. Since (by construction) $nr \ge 1+ \max \Delta$, there then exists $L \in \LLc_{nr\rho}$ such that $L^+ \cap \llb 1, nr-1 \rrb$ is non-empty, with the result that $\rho(L^+) \ge nr\rho/\min L^+ > \rho$, which is, however, impossible.
\end{proof}
\begin{claim}
\label{claim4}
Given $j \in \llb 0, n - 1 \rrb$, there exists $m_j \in \NNp$ such that the following conditions hold:
\begin{enumerate}[label={\rm (\roman{*})}]
\item\label{it:cond(i)_claim4} $\rho_{m_jn+j} - m_jn \rho = \max_{\kappa \ge 1} (\rho_{n\kappa+j} - n\kappa\rho)$.
\item\label{it:cond(ii)_claim4} $\rho_{(m_j+i)n+j} = \rho_{m_jn+j} + in\rho$ for all $i \in \NNb$.
\end{enumerate}
\end{claim}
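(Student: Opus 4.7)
The plan is to fix $j \in \llb 0, n-1 \rrb$ and to study the auxiliary sequence
\[
a_\kappa := \rho_{n\kappa+j} - n\kappa\rho \qquad (\kappa \in \NNp).
\]
I would show that $(a_\kappa)$ is a non-decreasing, non-negative, integer-valued sequence that is bounded above. Such a sequence is necessarily eventually constant, so taking $m_j$ to be the smallest $\kappa \ge 1$ at which $a_\kappa$ attains its maximum will yield (i) by construction and (ii) by combining monotonicity with maximality.

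First I would verify integrality. Since $\rho < \infty$, Proposition \ref{prop:limits} forces each $\rho_k$ (for $k \in \NNp$) to be a finite non-negative integer; in particular, \eqref{equ:rescaling} tells us that $n\kappa\rho = \rho_{n\kappa}$ is itself a non-negative integer, so $a_\kappa \in \ZZb$. Non-negativity then follows: for $j \ge 1$, Lemma \ref{prop:basic(1)}\ref{it:prop:basic(1)(iii)} applied to $h = n\kappa$ and $k = j$ gives $\rho_{n\kappa+j} \ge \rho_{n\kappa} + \rho_j \ge \rho_{n\kappa}$, while for $j = 0$ we have $a_\kappa = 0$ trivially. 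Applying the same superadditivity with $h = n$ and $k = n\kappa + j$, and using $\rho_n = n\rho$ from \eqref{equ:rescaling}, gives $a_{\kappa+1} \ge a_\kappa$. The upper bound $a_\kappa \le j\rho$ is immediate from $\rho_k \le k\rho$ (another consequence of Proposition \ref{prop:limits}).

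Once these four properties are in hand, set $a_\ast = \sup_{\kappa \ge 1} a_\kappa$ and $m_j = \min\{\kappa \in \NNp : a_\kappa = a_\ast\}$. Condition (i) is then built into the choice of $m_j$, and for (ii) monotonicity forces $a_{m_j} \le a_{m_j + i} \le a_\ast = a_{m_j}$ for every $i \in \NNb$, so that $a_{m_j + i} = a_{m_j}$; rewriting this equality in terms of $\rho$ yields $\rho_{(m_j+i)n+j} = \rho_{m_j n + j} + i n \rho$. There is no real obstacle: the only conceptual point is recognizing that the defect $a_\kappa$ from the ideal growth rate $n\rho$ per step of size $n$ is the correct object to track, and the essential ingredient that upgrades \emph{convergence} of $a_\kappa$ (free from monotonicity and boundedness) to \emph{eventual constancy} is the integrality of $n\kappa\rho$ supplied by \eqref{equ:rescaling}.
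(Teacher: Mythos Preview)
Your proof is correct and follows essentially the same approach as the paper: both arguments observe that the set $\{\rho_{n\kappa+j} - n\kappa\rho : \kappa \ge 1\}$ is a bounded-above subset of $\ZZb$ (using Proposition \ref{prop:limits} for the bound and \eqref{equ:rescaling} for integrality), pick $m_j$ to realize the maximum, and then obtain (ii) by sandwiching $a_{m_j+i}$ between $a_{m_j}$ (maximality) and $a_{m_j}$ again (superadditivity from Lemma \ref{prop:basic(1)}\ref{it:prop:basic(1)(iii)} together with $\rho_{in}=in\rho$). Your explicit isolation of the monotonicity of $(a_\kappa)$ is a mild reorganization of the same inequalities, not a substantive departure.
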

\begin{proof}[Proof of Claim \ref{claim4}]
By Proposition \ref{prop:limits}, $\rho_\kappa \le \kappa \rho$ for every $\kappa \in \NNp$, and hence $\rho_{n\kappa + j} -  n\kappa \rho \le j\rho \le (n-1) \rho$ for all $\kappa \in \NNp$.
Since $\rho < \infty$, this gives that the set
$$
\mathcal D_j = \{\rho_{n\kappa + j} -  n\kappa\rho: \kappa \in \NNp\} \subseteq \mathbb Z
$$
has a maximum element; let $m_j \in \NNp$ such that $\rho_{m_jn + j} -  m_jn\rho = \max \mathcal D_j$. Accordingly, we get by \eqref{equ:rescaling} and Lemma \ref{prop:basic(1)}\ref{it:prop:basic(1)(ii)} that, for every $i \in \NNb$,
\begin{equation*}
\label{equ:last_equ}
\begin{split}
\rho_{(m_j+i)n+j} - (m_j + i)n\rho
    & \le \rho_{m_jn+j} - m_jn \rho  \le \rho_{m_jn+j} + \rho_{in} - (m_j + i)n \rho \\
    & \le \rho_{(m_j+i)n+j} - (m_j + i)n \rho,
\end{split}
\end{equation*}
which leads to the desired conclusion.
\end{proof}

At this point, let $k^\ast$ be a fixed integer $> m_0 + \cdots + m_{n-1} + nr\rho$, where for each $j \in \llb 0, n - 1\rrb$ we denote by $m_j$ the smallest positive integer for which conditions \ref{it:cond(i)_claim4} and \ref{it:cond(ii)_claim4} in Claim  \ref{claim4} are verified. Then pick any integer $k \ge k^\ast$. By construction and \eqref{equ:rescaling}, we have
$$
\sup \UUc_k = \rho_k = \rho_{k-nr\rho} + nr\rho^2 = \sup(\UUc_{nr\rho}) + \sup(\UUc_{k-nr\rho}) = \sup(\UUc_{nr\rho} + \UUc_{k-nr\rho}).
$$
On the other hand, we get by Lemma \ref{prop:basic(1)}\ref{it:prop:basic(1)(ii)} that $\UUc_{nr\rho} + \UUc_{k-nr\rho} \subseteq \UUc_k$, and by Claim \ref{claim3} that $\UUc_{nr\rho}$ is an AP with difference $\delta$. Moreover, Claim \ref{claim3} gives that
$$
\sup \UUc_{nr\rho} - \min \UUc_{nr\rho} \ge nr(\rho^2 - 1) \ge \frac{1}{\delta}\max \Delta,
$$
while $\Delta(\UUc_{k-nr\rho}) \subseteq \delta \cdot \NNb$ and $\sup \Delta(\UUc_{k-nr\rho}) \le \sup \Delta$ by Lemma \ref{lem_FrGe}. It follows that also the sumset $\UUc_{nr\rho} + \UUc_{k - nr\rho}$ is an AP with difference $\delta$, and so is $\UUc_k \cap \llb k, \rho_k \rrb$, as we see from the above that $\UUc_k \cap \llb k, \rho_k \rrb = \bigl(\UUc_{nr\rho} + \UUc_{k - nr\rho}\bigr) \cap \llb k, \rho_k \rrb$. Therefore, we get by Lemma \ref{lem:AP_reduction_to_the_upper_part} that also $\UUc_k$ is an AP with difference $\delta$, which completes the proof of the theorem, as $k$ is any integer $\ge k^\ast$.
\end{proof}
\begin{proof}[Proof of Corollary \ref{cor:generalization}]
By hypothesis, there exist $k_0, d \in \NNp$ and $M \in \NNb$ such that, for every $k \ge k_0$, $\UUc_k$ is an AAP with difference $d$ and bound $M$, with the result that the set
$\UUc_k^\ast = \UUc_k \cap \llb \lambda_k + M, \maxrho_k - M \rrb
$
is an AP with difference $d$. Moreover, we have that $k \in \UUc_k$ for all $k \in \NNp$, and by Lemma \ref{lem:reduction_to_the_upper_part} we can assume $d = \delta$, so that $\UUc_k^\ast \subseteq k + \delta \cdot \ZZb$.

\ref{it:cor:some_properties_of_ASTU_families(i)}
Suppose now that $\maxrho_k < \infty$ for all $k \in \NNp$. Then, for every $k \in \NNp$ we can write
$$
\UUc_k = \bigl( \llb \lambda_k, k-1 \rrb \cap \UUc_k
\bigr) \cup \{k\} \cup \bigl( \llb k+1, \maxrho_k \rrb \cap \UUc_k
\bigr),
$$
which shows, together with the preliminary considerations, that all of $\UUc_1, \UUc_2, \ldots$ are AAPs with difference $d$ and bound
$\max \{M, M_0\}$, where $M_0 = \sup_{1 \le k < k_0} \max \{k - \lambda_k, \maxrho_k - k\}$.

\ref{it:cor:some_properties_of_ASTU_families(ii)} If there is some $k \in \NNp$ such that $\maxrho_k =
\infty$, then each side of equation \eqref{equ:limit_of_size} is infinite, and we are done. Otherwise, we have by the preliminary considerations that, for $k \ge k_0$,
$$
\frac{(\maxrho_k - M) - (\lambda_k +
M) - \delta}{k\delta} \le \frac{|\UUc_k|}{k} \le \frac{\maxrho_k - \lambda_k + \delta}{k\delta},
$$
which, combined with Proposition \ref{prop:limits}, leads, in the limit as $k \to \infty$, to the desired conclusion that the rightmost and leftmost sides of \eqref{equ:limit_of_size} are equal.

As for the rest,  if $X$ and $Y$ are non-empty sets of integers, then $|X+Y| \ge |X| + |Y| - 1$ (see, e.g., \cite[Theorem 3.1]{Gr13a}). This, along with Lemma \ref{prop:basic(1)}, yields that the sequence $\bigl(|\UUc_k| - 1\bigr)_{k \ge 1}$ is superadditive, so we are done by Fekete's lemma.
\end{proof}

\medskip
\section{Arithmetic of commutative monoids} \label{3}
\medskip

In this section we apply the results on directed families from Section \ref{2} (in particular, Theorem \ref{th:structure_theorem} and Corollary \ref{cor:generalization}) to systems of sets of lengths of certain classes of semigroups. The main abstract results  are Theorems \ref{3.4} and \ref{3.6}, and they are valid for a class of commutative semigroups which are not necessarily cancellative. These theorems as well as the preparatory lemmas are well-known in the cancellative setting, partly with different bounds. However, there are several striking differences between the present setting and the cancellative one, which are gathered in Remarks \ref{important}. The semigroups we have in mind are certain semigroups of ideals and of modules. They will be discussed in Subsections \ref{subsec:3.2} and \ref{subsec:3.3} and  have been the motivating examples for this paper (Corollaries \ref{3.8} and \ref{3.9}).

We first introduce a suitable class of semigroups and give some motivation for it.
A \textit{semigroup} $H$ will be a non-empty set  together with an associative and commutative binary operation  such that $H$ possesses an identity element, which we denote by $1_H$ (or simply by $1$) if $H$ is written multiplicatively. Apart from  semigroups of modules (discussed in Subsection \ref{subsec:3.3}) and from power monoids (introduced in Subsection \ref{subsec:3.4}), we will always adopt the multiplicative notation. We use $H^{\times}$ for the group of  invertible elements of $H$, and if $H$ is cancellative, then we denote by ${\sf q}(H)$ the quotient group of $H$.

Suppose that $H$ has the property that every  non-unit can be written as a finite product of distinguished elements. Our goal is  to count the number of factors in such a factorization, and then to attach to each element $a \in H$ a  set of factorization lengths $\LLs (a) \subseteq \NNb$. This should be done in a way that the product of two factorizations of elements $a$ and $b$ is a factorization of $ab$, which implies that  $\LLs (a) + \LLs (b) \subseteq \LLs (ab)$. Every unit $u \in H$ is considered to have an empty factorization, and hence we define  $\LLs (u)=\{0\}$. So if $a$ is a non-unit and $b$ is any element of the semigroup such that $a = ab$, then $\LLs (a) + \LLs (b) \subseteq \LLs (a)$. In particular, if $\LLs (a)$ is a finite non-empty set, this is possible only if $\LLs (b) = \{0\}$, and hence $b$ has to be a unit.

Thus a theory of sets of factorization lengths subjected to
the above constraints is necessarily restricted to  semigroups satisfying one of the next two equivalent conditions:
\begin{itemize}
\item  If $a, u \in H$ and $a = au$, then $u \in H^{\times}$.

\item If $a, b, u, v  \in H$ are such that $a = bu$ and $b = a v$, then $u, v \in H^{\times}$.
\end{itemize}
We refer to such semigroups as {\it unit-cancellative}, and for convenience we adopt the following convention:
\begin{center}
{\it
Throughout, a
monoid will always mean a commutative unit-cancellative semigroup.}
\end{center}
Let $H$ be a monoid.
Given $a, b \in H$, we say that  $a$ and  $b$ are associated (and write $a \simeq b$) if $aH^{\times} = bH^{\times}$, which, by unit-cancellativity, is equivalent to $aH = bH$. This defines a congruence on $H$, and $H_{\red} = H/\simeq$ denotes the associated reduced monoid of $H$. The monoid $H$ is called reduced if $a,b \in H$ and $a \simeq b$ yield $a=b$.
We say that $a$ divides $b$ (and write $a \t b$) if $b \in aH$.  A  non-unit element $p \in H$ is said to be:
\begin{itemize}
\item \textit{irreducible} (or an \textit{atom}) if $a, b \in H$ and $p = ab$ imply $a \in H^{\times}$ or $b \in H^{\times}$;
\item \textit{prime} if $a, b \in H$ and $p \t ab$ imply $p \t a$ or $p \t b$.
\end{itemize}
The prime elements of a monoid are irreducible. We denote by $\mathcal A (H)$ the set of atoms of $H$.
We say that $H$ is \textit{atomic} if every  non-unit is a finite product of atoms.
For a set $P$, we denote by $\mathcal F (P)$ the free abelian monoid with basis $P$. An element $a \in \mathcal F (P)$ will be written as $a = \prod_{p \in P} p^{\mathsf v_p (a)}$, where $\mathsf v_p (a) \in \mathbb{N}_0$ for all $p \in P$ and $\mathsf v_p (a) = 0$ for almost all $p \in P$, and we will denote by $|a|= \sum_{p \in P} \mathsf v_p (a) \in \mathbb{N}_0$ the length of $a$. Clearly, $P$ is the set of atoms of $\mathcal F (P)$ and every atom is prime.

A monoid $H$ is said to be a \textit{Krull monoid} if it is cancellative and one of the following equivalent conditions holds:
\begin{enumerate}
\item[(a)] $H$ is completely integrally closed and $v$-noetherian.

\item[(b)] There are a free abelian monoid $F$ and a homomorphism $\varphi: H \to F$, called a \textit{divisor homomorphism}, with the property that if $a, b \in H$ and $\varphi (a) \t \varphi (b)$ (in $F$), then $a \t b$ (in $H$).
\end{enumerate}
The theory of Krull monoids is presented in the monographs \cite{HK98, Ge-HK06a}. We recall that an integral domain is a Krull domain if and only if its monoid of non-zero elements is Krull. Thus Condition (a) shows that every noetherian integrally closed domain is Krull (for more examples we refer to \cite{Ge16c}).

It is well-known that the ascending chain condition (shortly, ACC) on principal ideals implies atomicity (in various classes of rings and semigroups). For the sake of completeness we provide a short proof of this and some related facts in the present setting.

\medskip
\begin{lemma} \label{3.1}
Let $H$ be a monoid.
\begin{enumerate}[label={\rm (\arabic{*})}]
\item\label{it:lemma3.1(1)} If $H$  satisfies the {\rm ACC} on principal ideals, then $H$ is atomic.
\item\label{it:lemma3.1(2)} If $H$ is atomic, then for every set $P$ of pairwise non-associated primes of $H$ there is a submonoid $T \subseteq H$ such that $H = \mathcal F (P)  T$.
\end{enumerate}
\end{lemma}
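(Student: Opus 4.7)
For part \ref{it:lemma3.1(1)}, I plan to argue by contradiction. Let $\Sigma$ denote the set of non-units of $H$ that fail to be finite products of atoms, and suppose $\Sigma \ne \varnothing$. Invoking the ACC on principal ideals, I pick some $a_0 \in \Sigma$ with $a_0 H$ maximal among $\{aH : a \in \Sigma\}$. Since $a_0 \in \Sigma$ is in particular not an atom, it factors as $a_0 = bc$ with $b, c \in H \setminus H^{\times}$, yielding the inclusions $a_0 H \subseteq bH$ and $a_0 H \subseteq cH$. The crucial step is to show both inclusions are strict: if, say, $bH = a_0 H$, then $b = a_0 u$ for some $u \in H^{\times}$, and substituting into $a_0 = bc$ gives $a_0 = a_0(uc)$, whereupon unit-cancellativity forces $uc \in H^{\times}$ and hence $c \in H^{\times}$, contradicting $c \notin H^{\times}$. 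This is the only delicate point of the whole proof, and it is precisely where unit-cancellativity takes over from full cancellativity in the classical argument. Once both inclusions are strict, the maximality of $a_0 H$ in $\Sigma$ forces $b, c \notin \Sigma$, so each is a finite product of atoms, and hence so is $a_0 = bc$, a contradiction.

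For part \ref{it:lemma3.1(2)}, my candidate for $T$ is
\[
T = \{a \in H : p \nmid a \text{ for every } p \in P\}.
\]
That $T$ is a submonoid is immediate from the primality of the elements of $P$: $1 \in T$ because primes are non-units, and $a, b \in T$ forces $ab \in T$ since $p \mid ab$ would imply $p \mid a$ or $p \mid b$. To show $H = \mathcal F(P) T$, I fix $a \in H$, invoke atomicity to write $a = q_1 \cdots q_n$ with $q_i \in \mathcal A(H)$, and partition $\{1, \ldots, n\}$ into $I = \{i : p \mid q_i \text{ for some } p \in P\}$ and its complement $J$. For each $i \in I$, since $q_i$ is an atom and $p$ is a non-unit divisor of $q_i$, one has $q_i = p_i u_i$ with $u_i \in H^{\times}$, and the pairwise non-associativity of $P$ makes the resulting $p_i \in P$ unique.

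Collecting terms, $a$ rewrites as $\pi t$ with $\pi = \prod_{i \in I} p_i$ lying in the submonoid of $H$ generated by $P$ (which I identify with $\mathcal F(P)$) and $t = \prod_{i \in I} u_i \prod_{j \in J} q_j$. The final check is that $t \in T$: if some $p \in P$ divided $t$, primality would force $p$ to divide either a unit $u_i$ (impossible) or some $q_j$ with $j \in J$ (excluded by the definition of $J$). Hence $H = \mathcal F(P) T$, as required.
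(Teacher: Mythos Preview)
Your proof is correct and follows essentially the same route as the paper's. For part \ref{it:lemma3.1(1)} the argument is identical up to a cosmetic variation: where the paper writes $b = ad$ for some $d \in H$ and then applies unit-cancellativity to $a = a(dc)$, you invoke the equivalence $a_0H = bH \iff a_0 H^\times = bH^\times$ (stated just before the lemma in the paper) to get $u \in H^\times$ directly; both reach the same contradiction. For part \ref{it:lemma3.1(2)} the paper defines the same $T$ and then simply asserts ``Since $H$ is atomic, it follows that $H = \mathcal F(P)T$'' without further detail, whereas you spell out the atom-by-atom partition explicitly; your expanded argument is exactly what the paper's sentence is compressing. One small remark: your parenthetical identification of the submonoid of $H$ generated by $P$ with the free monoid $\mathcal F(P)$ is a notational liberty (freeness is not being asserted in the lemma, only that every element of $H$ factors as a product of elements of $P$ times an element of $T$), but this does not affect the validity of your argument.
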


\begin{proof}
\ref{it:lemma3.1(1)} Assume to the contrary that the set
\[
\Omega = \{aH \colon a \in H  \ \text{is not a product of finitely many atoms} \}
\]
is non-empty. Then it has a maximal element, say $aH$. Since $a$ is not an atom, there are non-units $b,c \in H$ such that $a=bc$. Thus we obtain that $aH \subseteq bH$ and $aH \subseteq cH$. Assume for a contradiction that $aH=bH$. Then there is an element $d \in H$ such that $b=ad$, and unit-cancellativity implies that $c \in H^{\times}$, a contradiction. Thus $aH \subsetneq bH$, and similarly $aH \subsetneq cH$. It follows by the maximality of $aH$ that $b$ and $c$ are finite products of atoms, and hence so is $a$, a contradiction.

\smallskip
\ref{it:lemma3.1(2)} Let $T \subseteq H$ be the set of all $a \in H$ such that $ p \nmid a$ for all $p \in P$. Then $H^{\times} \subseteq T$ and $T$ is a submonoid of $H$. Since $H$ is atomic, it follows that $H = \mathcal F (P)T$.
\end{proof}

Suppose that $H$ is an atomic monoid. If $a = u_1 \cdot \ldots \cdot u_k$ is a product of $k$ atoms, then $k$ is called the length of the factorization and the set $\mathsf L_H (a) = \mathsf L (a) \subseteq \N$ of all possible factorization lengths is called the \textit{set of lengths} of $a$. It is convenient to take $\mathsf L (u)=\{0\}$ for all units $u \in H^{\times}$. Then
\[
\LLc (H) = \{\mathsf L (a) \colon a \in H \}
\]
denotes the \textit{system of sets of lengths} of $H$.
If $H \ne H^{\times}$, $\LLc (H)$ is a  directed family of subsets of $\N_0$, and we set $\Delta (H) = \Delta \bigl( \LLc (H) \bigr), \ \rho (H) = \rho \bigl( \LLc (H) \bigr)$, and for every $k \in \N$,
\[
 \UUc_k (H) = \UUc_k \bigl( \LLc (H) \bigr), \quad  \rho_k (H) = \rho_k \bigl( \LLc (H) \bigr), \quad \text{and} \quad \lambda_k (H) = \lambda_k \bigl( \LLc (H) \bigr) \,.
\]
If $H = H^{\times}$, then for convenience we set $\Delta (H)=\emptyset$, $\rho (H)=1$, and for every $k \in \mathbb N$, $\mathscr{U}_k(H) = \{k\}$ and $\rho_k(H) = \lambda_k(H) = k$.
We say that $H$ is a \textit{\BF-monoid} if every   $L \in \LLc (H)$ is finite, and that $H$ has \textit{accepted elasticity} if $\rho(H) = \rho(L) < \infty$ for some $L \in \mathscr{L}(H)$. All examples discussed in this paper are BF-monoids, and Proposition \ref{3.5} provides a sufficient condition for a monoid to be a BF-monoid.

With this in mind, we now recall a couple more of arithmetical concepts, that is the catenary and tame degrees and the $\omega$-invariants of a monoid.

To begin, we denote by $\mathsf Z (H) = \mathcal F (\mathcal A (H_{\red}))$ the factorization monoid of $H$, and by $\pi \colon \mathsf Z (H) \to H_{\red}$, $u \mapsto u$ for all $u \in \mathcal A (H_{\red})$,  the canonical epimorphism. For every  $a \in H$, we let
\[
\mathsf Z_H (a) = \mathsf Z (a) = \pi^{-1} (aH^{\times}) \subseteq \mathsf Z (H)
\]
be the \textit{set of factorizations} of $a$, and  we have $\mathsf L (a) = \fixed[-0.3]{\text{ }}\bigl\{ |z| \colon z \in \mathsf Z (a) \bigr\} \fixed[-0.3]{\text{ }} \subseteq \N_0$.

Let $z, z' \in \mathsf Z (H)$. Then there exist $\ell, m,n \in \N_0$ and $u_1, \ldots, u_{\ell}, v_1, \ldots, v_m, w_1, \ldots, w_n \in \mathcal A (H_{\red})$ with $\{v_1, \ldots, v_m\} \cap \{w_1, \ldots, w_n\} = \emptyset$ such that
\[
z= u_1 \cdot \ldots \cdot u_{\ell}v_1 \cdot \ldots \cdot v_m \quad \text{and} \quad z' = u_1 \cdot \ldots \cdot u_{\ell}w_1 \cdot \ldots \cdot w_n \fixed[0.15]{\text{ }}.
\]
We call $\mathsf d (z,z') = \max \{m,n\}$ the \textit{distance} between $z$ and $z'$.
Given $a \in H$, the \textit{catenary degree} $\mathsf c (a)$ of $a$ is then the smallest $N \in \N_0 \cup \{\infty\}$ with the following property{\rm \,:}
\begin{itemize}
\item[] For any two factorizations $z,z' \in \mathsf Z (a)$ there are factorizations $z_0, \ldots, z_k$ of $a$ with $z_0 = z$ and $z_k = z'$ such that $\mathsf d (z_{i-1}, z_i) \le N$ for every $i \in \llb 1,k \rrb$.
\end{itemize}
Accordingly, we define the \textit{catenary degree} of $H$ by
\[
\mathsf c (H) = \sup \{ \mathsf c (a) \colon  a \in H \} \in \N_0 \cup \{\infty\}.
\]
Next for every subset $I \subseteq H$, we let $\omega (H,I)$ be the smallest $N \in \N_0\cup \{\infty\}$ for which the following holds{\rm \,:}
    \begin{itemize}
    \item[] If $n \in \N$ and $a_1, \ldots, a_n \in H$ with $a_1 \cdot \ldots \cdot a_n \in I$, then there exists a subset $\Omega \subseteq \llb 1,n \rrb$  such that $|\Omega| \le N$ and $\prod_{\nu \in \Omega} a_{\nu} \in I$.
    \end{itemize}
In particular, for every $a \in H$ we let $\omega (H,a) = \omega (H, aH)$, and we set
      \[
      \omega (H) = \sup \{ \omega (H, u) \colon u \in \mathcal A (H) \} \in \N_0 \cup \{\infty\} \,.
      \]
By convention, empty products are equal to $1 \in H$. Thus, if $a  \in H^{\times}$, then $\omega (H,a)=0$, and if $p \in H$ is a prime, then $\omega (H,p)=1$.

Lastly, for every $u \in \mathcal A (H_{\red})$ we let  $\mathsf t (H,u)$ denote the smallest $N \in \N_0\cup \{\infty\}$ such that:
\begin{itemize}
\item[] Given $a \in H$ with $\mathsf Z (a) \cap u\mathsf Z (H) \ne \emptyset$ and $z \in \mathsf Z (a)$, there exists a factorization $z' \in \mathsf Z (a) \cap u\mathsf Z (H)$ with $\mathsf d (z,z')\le N$.
\end{itemize}
We say that $H$ is \textit{locally tame} if $\mathsf t (H,u)<\infty$ for all $u \in \mathcal A (H_{\red})$, and \textit{\textup{(}globally\textup{)} tame} if its \textit{tame degree}
\[
\mathsf t (H) = \sup \{\mathsf t (H,u) \colon u \in \mathcal A (H_{\red}) \} \in \N_0 \cup \{\infty\}
\]
is finite. In the next proposition we collect
some elementary properties of the above invariants that
are well-known for cancellative monoids (\cite{Ge-Ha08a, Ge-Ka10a}).

\medskip
\begin{proposition} \label{3.2}
Let $H$ be an atomic monoid.
\begin{enumerate}[label={\rm (\arabic{*})}]
\item\label{it:prop3.2(1)} For every $a \in H $ we have $\sup \mathsf L (a) \le \omega (H,a)$. In particular, if $\omega (H,a) < \infty$ for every $a \in H$, then $H$ is a \BF-monoid.

\smallskip
\item\label{it:prop3.2(2)} For every $a \in H$ with $|\mathsf Z (a)|\ge 2$, we have $1 + \sup \Delta ( \mathsf L (a)) \le \mathsf c (a)$.  In particular, if $\Delta (H) \ne \emptyset$, then $1 + \sup \Delta (H) \le \mathsf c (H)$.

\smallskip
\item\label{it:prop3.2(3)} For every $u \in \mathcal A (H)$ with $\omega (H, u)< \infty$ we have $\mathsf t (H, u) \le \rho_{\omega (H,u)} (H)$.
\end{enumerate}
\end{proposition}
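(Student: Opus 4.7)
For part \ref{it:prop3.2(1)}, my plan is to fix an arbitrary factorization $a = u_1 \cdot \ldots \cdot u_k$ into atoms and apply the definition of $\omega(H,a) = \omega(H, aH)$ directly to the product $u_1 \cdot \ldots \cdot u_k \in aH$. This yields a subset $\Omega \subseteq \llb 1, k \rrb$ with $|\Omega| \le \omega(H,a)$ such that $\prod_{\nu \in \Omega} u_\nu = ac$ for some $c \in H$. Substituting back gives $a = a \cdot c \prod_{\nu \notin \Omega} u_\nu$, and unit-cancellativity forces $c \prod_{\nu \notin \Omega} u_\nu \in H^\times$. Since every divisor of a unit is a unit whereas atoms are non-units, the complement $\llb 1, k \rrb \setminus \Omega$ must be empty, so $k = |\Omega| \le \omega(H,a)$; the BF-monoid statement is then immediate.

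For part \ref{it:prop3.2(2)}, given $d \in \Delta(\mathsf L(a))$ the plan is to pick $k, \ell \in \mathsf L(a)$ with $\ell - k = d$ and $\mathsf L(a) \cap \llb k, \ell \rrb = \{k, \ell\}$, factorizations $z, z' \in \mathsf Z(a)$ of lengths $k$ and $\ell$, and a chain $z = z_0, \ldots, z_n = z'$ with $\mathsf d(z_{j-1}, z_j) \le \mathsf c(a)$. Since every $|z_j| \in \mathsf L(a)$ avoids the open interval $(k, \ell)$, some consecutive pair must satisfy $|z_i| \le k < \ell \le |z_{i+1}|$. Writing $z_i = x y_1$ and $z_{i+1} = x y_2$ with $\gcd(y_1, y_2) = 1$ in $\mathsf Z(H) = \mathcal F(\mathcal A(H_\red))$, the heart of the argument is to show $|y_1| \ge 1$: if not, then $z_i = x$ while $|y_2| \ge 1$ (since $|z_{i+1}| > |z_i|$), and passing to $H_\red$ yields $\pi(x) \pi(y_2) = \pi(x)$, so unit-cancellativity of $H_\red$ forces $\pi(y_2) = 1$, making any atomic factor of $y_2$ a unit in $H_\red$---a contradiction. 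Once $|y_1| \ge 1$ is secured, $\mathsf d(z_i, z_{i+1}) = |y_2| = |y_1| + (|z_{i+1}| - |z_i|) \ge 1 + d$, so $1 + d \le \mathsf c(a)$.

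For part \ref{it:prop3.2(3)}, I will fix an atom $u$ with $\omega(H,u) < \infty$, an element $a$ with $\mathsf Z(a) \cap u \mathsf Z(H) \ne \emptyset$, and a factorization $z = v_1 \cdot \ldots \cdot v_n \in \mathsf Z(a)$. Applying $\omega(H, uH)$ to $v_1 \cdot \ldots \cdot v_n \in uH$ produces $\Omega \subseteq \llb 1, n \rrb$ with $k := |\Omega| \le \omega(H,u)$ and $b := \prod_{\nu \in \Omega} v_\nu \in uH$. Writing $b = uc$ and pairing $u$ with any factorization of $c$ (or using $u$ alone when $c \in H^\times$) produces some $z'' \in \mathsf Z(b) \cap u \mathsf Z(H)$, and then $z' := z'' \prod_{\nu \notin \Omega} v_\nu \in \mathsf Z(a) \cap u \mathsf Z(H)$. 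The greatest common divisor of $z$ and $z'$ in $\mathsf Z(H)$ is divisible by $\prod_{\nu \notin \Omega} v_\nu$, so their coprime remainders have lengths at most $k$ and $|z''|$ respectively, giving $\mathsf d(z, z') \le \max(k, |z''|)$. Since $k, |z''| \in \mathsf L(b) \subseteq \UUc_k(H)$, we have $|z''| \le \rho_k(H) \le \rho_{\omega(H,u)}(H)$, exploiting monotonicity of $\rho_k(H)$ in $k$ (which follows from Lemma \ref{prop:basic(1)}\ref{it:prop:basic(1)(iii)}), and the stated bound follows.

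The principal obstacle is part \ref{it:prop3.2(2)}: in a cancellative monoid one simply cancels the common part $x$ in the quotient group to see at once that $\min(|y_1|, |y_2|) \ge 1$, whereas here only unit-cancellativity of $H_\red$ is available, and its careful exploitation is precisely the adjustment needed to transplant the classical argument into the unit-cancellative setting.
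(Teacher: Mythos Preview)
Your proposal is correct and follows essentially the same approach as the paper's proof in all three parts: applying $\omega$ and unit-cancellativity in \ref{it:prop3.2(1)}, the chain argument together with the key inequality $1+\bigl||z|-|z'|\bigr|\le \mathsf d(z,z')$ (whose proof via unit-cancellativity of $H_\red$ you spell out in more detail than the paper does) in \ref{it:prop3.2(2)}, and extracting a short sub-product divisible by $u$ and refactoring in \ref{it:prop3.2(3)}. The only cosmetic difference is that you argue \ref{it:prop3.2(1)} directly rather than by contradiction, and in \ref{it:prop3.2(2)} you focus on showing $|y_1|\ge 1$ (the other inequality $|y_2|\ge 1$ being automatic from $|z_{i+1}|>|z_i|$), whereas the paper states the symmetric inequality for arbitrary distinct $z,z'$.
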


\begin{proof}
We may suppose that $H$ is reduced.

\ref{it:prop3.2(1)}  Let $a \in H$. If $\omega (H,a)=\infty$, the assertion is trivial.  Suppose that $\omega (H,a)<\infty$ and assume for a contradiction that $\sup \mathsf L (a) > \omega (H,a)$. Then there exist $m \in \N$ with $m > \omega (H,a)$ and atoms $v_1, \ldots, v_{m}$ such that $a=v_1 \cdot \ldots \cdot v_{m}$. Accordingly,  there is a subset $\Omega \subseteq \llb 1,m \rrb$, say $\Omega = \llb 1,\ell \rrb$ with $\ell \le \omega (H,a) < m$, such that $a$ divides $v_1 \cdot \ldots \cdot v_{\ell}$. Then unit-cancellativity implies that $v_{\ell+1} \cdot \ldots \cdot v_m \in H^{\times}$, a contradiction.

\smallskip
\ref{it:prop3.2(2)} We start by verifying a basic inequality.  Let $a\in H$, and let  $z, z' \in \mathsf Z (a)$ be distinct factorizations. Set $x = \gcd (z,z')$, and $z=xy$, $z'=xy'$ with $y,y' \in \mathsf Z (H)$. Then $\mathsf d (z,z')=\max \fixed[-0.3]{\text{ }}\bigl\{ |y|, |y'| \bigr\}\fixed[-0.25]{\text{ }}$, and unit-cancellativity implies that $|y|\ge 1$ and $|y'|\ge 1$.  Thus it follows that
\begin{equation*} \label{basic-equ}
1+ \big| |z|-|z'| \big| = 1 + \big| |y|-|y'| \big| \le \max \fixed[-0.3]{\text{ }}\bigl\{ |y|, |y'| \bigr\}\fixed[-0.25]{\text{ }} = \mathsf d (z,z') \,.
\end{equation*}
Now suppose that $a \in H$ with $\Delta (\mathsf L (a)) \ne \emptyset$ and let $s \in \Delta (\mathsf L (a))$. We have to show that $1+s\le \mathsf c (a)$. Indeed, there exist $z, z' \in \mathsf Z (a)$ such that $|z'|=|z|+s$, but there is no factorization $z'' \in \mathsf Z (a)$ with $|z|<|z''|<|z'|$. On the other hand, there are factorizations $z_0, \ldots, z_k$ of $a$ with $z_0 = z$ and $z_k=z'$ such that $\mathsf d (z_{i-1},z_i) \le \mathsf c (a)$ for every $i \in \llb 1,k \rrb$. Thus there is some $\nu \in \llb 1,k \rrb$ such that $|z_{\nu-1}|\le |z|$ and $|z_{\nu}|\ge |z'|$, and hence we get from the above that
\[
1+s \le 1 + |z_{\nu}|-|z_{\nu-1}| \le \mathsf d (z_{\nu-1}, z_{\nu})\le \mathsf c (a) \,.
\]
Finally, suppose that $\Delta (H)\ne \emptyset$. Then there is an element $a\in H$ with $|\mathsf Z (a)|\ge |\mathsf L (a)|\ge 2$, and for all $a\in H$ with $|\mathsf Z (a)|\ge 2$ we have $1 + \sup \Delta ( \mathsf L (a)) \le \mathsf c (a) \le \mathsf c (H)$. So the assertion follows.

\smallskip
\ref{it:prop3.2(3)} Pick $u \in \mathcal A(H)$ with $\omega (H,u) < \infty$, and let $a \in H$ with $\mathsf{Z}(a) \cap u \mathsf Z (H) \ne \emptyset$. If $z = v_1 \cdot \ldots \cdot v_n \in \mathsf{Z}(a)$, then $u \mid v_1 \cdot \ldots \cdot v_n$ (in $H$), so (after renumbering if necessary) there exists $k \in \llb 1, n \rrb$ with $k \le \omega(H, u)$ such that $u \mid v_1 \cdot \ldots \cdot v_k$. Thus $v_1 \cdot \ldots \cdot v_k = u\fixed[0.15]{\text{ }} u_2 \cdot \ldots \cdot u_\ell$ for some $u_2, \ldots, u_\ell \in \mathcal A(H)$ with
$\ell \le \rho_k(H) \le \rho_{\omega(H, u)}(H)$.
It follows that  $z^\prime = u\fixed[0.15]{\text{ }} u_2 \cdot \ldots \cdot u_\ell \cdot v_{k+1} \cdot \ldots \cdot v_n  \in \mathsf{Z}(a)$ and $
\mathsf d(z, z^\prime) \le \max\{k, \ell\} \le \rho_{\omega(H, u)}(H)$. Therefore $\mathsf t (H,u) \le \rho_{\omega(H, u)}(H)$.
\end{proof}

\smallskip
Next we study the $\omega$-invariants with ideal theoretic tools.
A \textit{weak ideal system} on a monoid $H$ is a map $r \colon \PPc (H) \to \PPc (H), X \mapsto X_r$ such that the following conditions are satisfies for all subsets $X, Y \subseteq H$ and all $c \in H$:
\begin{itemize}
\item $X \subseteq X_r$.

\item $X \subseteq Y_r$ implies $X_r \subseteq Y_r$.

\item $c H \subseteq \{c\}_r$.

\item $cX_r \subseteq (cX)_r$.
\end{itemize}
An \textit{ideal system} on $H$ is a weak ideal system such that $cX_r = (cX)_r$ for all $X \subseteq H$ and all $c \in H$. We refer to \cite{HK98} for a thorough treatment of ideal systems.

Let $r$ be a weak ideal system. A subset $I \subseteq H$ is called an $r$-ideal if $I_r=I$. We denote by $\mathcal I_r (H)$ the set of all non-empty $r$-ideals, and we define $r$-multiplication by setting $I \cdot_r J = (IJ)_r$ for all $I,J \in \mathcal I_r (H)$. Then $\mathcal I_r (H)$ together with $r$-multiplication is a reduced semigroup with identity element $H$.
An $r$-ideal $\mathfrak p \in \mathcal I_r (H)$ is called prime if $\mathfrak p \ne H$, and $a,b \in H$ and  $ab \in \mathfrak p$ imply $a \in \mathfrak p$ or $b \in \mathfrak p$. We denote by $r$-$\spec (H)$ the set of all prime $r$-ideals of $H$. The monoid $H$ is called $r$-noetherian if it satisfies the ACC on $r$-ideals. For $I \in \mathcal I_r (H)$ and $h \in H$, we set $(I \colon h) = \{ b \in H \colon hb \in I\}$ and note that $(I \colon h) \in \mathcal I_r (H)$  (see \cite[Section 2.4]{HK98}).

The result of the next proposition was first established for the $v$-system (i.e., the system of divisorial ideals) with a different proof (see \cite[Theorem 4.2]{Ge-Ha08a}).

\medskip
\begin{proposition} \label{3.5}
Let $H$ be a monoid and let $r$ be  an $r$-noetherian weak ideal system on $H$.
\begin{enumerate}[label={\rm (\arabic{*})}]
\item\label{it:prop_3.5(1)} For every $I \in \mathcal I_r (H)$ we have $\omega (H,I)<\infty$.

\smallskip
\item\label{it:prop_3.5(2)} If principal ideals are $r$-ideals, then $H$ is a \BF-monoid.
\end{enumerate}
\end{proposition}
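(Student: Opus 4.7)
The plan is to reduce \ref{it:prop_3.5(2)} to \ref{it:prop_3.5(1)} and to prove \ref{it:prop_3.5(1)} by extracting, via $r$-noetherianity, a maximal $r$-ideal at which $\omega$ is infinite, and then forcing this maximal ideal to be prime.

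For \ref{it:prop_3.5(2)}: since principal ideals are $r$-ideals, the ACC on $r$-ideals restricts to the ACC on principal ideals, and Lemma \ref{3.1}\ref{it:lemma3.1(1)} gives that $H$ is atomic. For each $a \in H$, Part \ref{it:prop_3.5(1)} applied to the $r$-ideal $aH$ yields $\omega(H, a) = \omega(H, aH) < \infty$, and Proposition \ref{3.2}\ref{it:prop3.2(1)} then gives $\sup \mathsf L(a) \le \omega(H, a) < \infty$, so each $\mathsf L(a)$ is finite and $H$ is a \BF-monoid.

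For \ref{it:prop_3.5(1)}, suppose for contradiction that $\omega(H, I_0) = \infty$ for some $I_0 \in \mathcal I_r(H)$. The family $\mathcal F = \{I \in \mathcal I_r(H) : \omega(H, I) = \infty\}$ is non-empty, and by $r$-noetherianity it admits a maximal element $J$. The key claim is that $J$ must be a prime $r$-ideal; once this is in hand the contradiction is immediate, since any prime $r$-ideal $\mathfrak p \ne H$ has $\omega(H, \mathfrak p) \le 1$ (every product $a_1 \cdots a_n \in \mathfrak p$ contains a factor in $\mathfrak p$), and $J \ne H$ because $\omega(H, H) = 0$. To prove $J$ is prime, I would assume for contradiction that $a, b \in H \setminus J$ with $ab \in J$. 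Then $(J : a) = \{h \in H : ha \in J\}$ is an $r$-ideal strictly containing $J$ (since $b \in (J : a) \setminus J$), and by maximality $N_a := \omega(H, (J : a)) < \infty$; symmetrically, $N_b := \omega(H, (J : b)) < \infty$. The plan is to combine these finite bounds with the relation $ab \in J$ as ``glue'' to bound $\omega(H, J)$ itself. Concretely, given any minimal bad product $c_1 \cdots c_n \in J$ (one for which no proper subproduct lies in $J$) with $n$ arbitrarily large, the inclusion $J \subseteq (J : a)$ combined with $N_a$ produces $\Omega \subseteq \llb 1, n \rrb$ with $|\Omega| \le N_a$ and $a \prod_{i \in \Omega} c_i \in J$; then iterating by applying $N_b$ to this augmented product in $(J : b)$, case-analyzing on whether the new subset retains $a$, and using $ab \in J$ in the retain case, should produce a subproduct of the $c_i$ of length bounded by a function $F(N_a, N_b)$ lying in $J$, contradicting minimality of the bad product as soon as $n > F(N_a, N_b)$.

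The main obstacle is making this auxiliary-factor removal argument go through in the unit-cancellative but not fully cancellative setting: without cancellation one cannot trivially ``divide out'' $a$ or $b$, and the naive case analysis tends to degenerate (the ``$a$ retained'' case merely reproduces $ab \cdot (\text{stuff}) \in J$ tautologically). The plan is to iterate the colon construction enough times to peel off each auxiliary factor one at a time, keeping careful track of which indices of the original bad product remain. Should this direct route prove too delicate, a fallback is to invoke primary-decomposition type structural results for $r$-noetherian monoids from \cite{HK98} and bound $\omega(H, I)$ via the intersection estimate $\omega(H, I_1 \cap I_2) \le \omega(H, I_1) + \omega(H, I_2)$ (which follows by concatenating witnessing subsets for $I_1$ and $I_2$ and using that $r$-ideals absorb multiplication by elements of $H$), applied to a finite decomposition of the radical $\sqrt I$ into minimal prime $r$-ideals.
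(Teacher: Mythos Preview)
Your reduction of \ref{it:prop_3.5(2)} to \ref{it:prop_3.5(1)} is correct and matches the paper. For \ref{it:prop_3.5(1)}, however, your argument has a genuine gap, which you yourself flag: the step from ``$(J:a)$ and $(J:b)$ have finite $\omega$'' to ``$J$ has finite $\omega$'' does not go through. Applying $N_a$ to a bad product $c_1\cdots c_n \in J$ gives only $a\prod_{i\in\Omega} c_i \in J$, and from there any attempt to peel off the auxiliary $a$ either loops tautologically (the case where $a$ is retained just reproduces $ab\cdot(\text{stuff}) \in J$) or introduces a $b$ you cannot remove either. Iterating colons does not terminate: each application trades one auxiliary factor for another, and there is no cancellation mechanism in a merely unit-cancellative monoid to close the loop. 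Your fallback via primary decomposition is likewise unavailable in this generality; no such structure theory is established for weak ideal systems on unit-cancellative (possibly non-cancellative) monoids in \cite{HK98}.

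The paper's argument is organised differently. Rather than fixing a maximal bad ideal and trying to force it to be prime, one proves two assertions. First (A1): for every $I \in \mathcal I_r(H)$ with $I \ne H$ there exists $J \in \mathcal I_r(H)$ with $I \subsetneq J$ such that $(I:h)$ is a \emph{prime} ideal for every $h \in J \setminus I$. This $J$ is obtained by choosing $h_0 \in H \setminus I$ with $(I:h_0)$ maximal among $\{(I:h) : h \in H \setminus I\}$, checking that this maximal colon is prime, and setting $J = (I \cup \{h_0\})_r$. Second (A2): for such a pair $I \subsetneq J$ one has $\omega(H, I) \le \omega(H, J) + 1$. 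The point is that once a subproduct of length $\le \omega(H,J)$ lands in $J$, either it already lies in $I$ and we are done, or it lies in $J \setminus I$, so the corresponding colon $(I : \text{subproduct})$ is prime and absorbs a \emph{single} further factor from the remaining ones. Chaining (A1) upward from $I$ to $H$ terminates by $r$-noetherianity, and (A2) bounds $\omega(H,I)$ by the chain length. The crucial difference from your scheme is that primality is arranged for the colons $(I:h)$ with $h$ ranging over an entire layer $J \setminus I$, so that wherever the first subproduct lands in that layer, one more factor suffices; your approach fixes two specific witnesses $a,b \notin J$ and never secures primality of the colon that actually arises.
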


\begin{proof}
\ref{it:prop_3.5(1)} If $I \subseteq H$ is a prime ideal, then $\omega (H, I)=1$. We start with the following two assertions.

\smallskip
\begin{enumerate}
\item[({\bf A1})] Given $I \in \mathcal I_r (H)$, there exists $J \in \mathcal I_r (H)$ with $I \subsetneq J$ such that the set $(I \colon h)$ is a prime ideal for every $h \in J \setminus I$.

\item[({\bf A2})] If $I, J \in \mathcal I_r (H)$ are as above, then $\omega (H,I) \le \omega (H,J)+1$.
\end{enumerate}

\smallskip
\noindent
\textit{Proof of} ({\bf A1}). Consider the set
\[
\Omega = \big\{ (I \colon h)  \colon h \in H \setminus I \big\} \subseteq \mathcal I_r (H) \fixed[0.1]{\text{ }},
\]
and choose an element $h_0 \in H \setminus I$ such that   $\mathfrak p = (I \colon h_0) \in \Omega$ is maximal. We claim that $\mathfrak p$ is a prime ideal. In fact, assume  that there exist $c,d \in H$ with $cd \in \mathfrak p$, but $d \notin \mathfrak p$. Then $h_0cd \in I$, $h_0d \notin I$, and
\[
\mathfrak p = (I \colon h_0) \subsetneq (I \colon h_0c) \fixed[0.1]{\text{ }} \,.
\]
Thus, the maximality of $\mathfrak p$ yields that $h_0c \in I$, and hence $c \in \mathfrak p$. Now, we assert that the ideal
\[
J = ( I \cup \{h_0\})_r
\]
has the required properties. Indeed, let $h \in J \setminus I$, and let $b \in H$ with $h_0b \in I$. Since $Ib \subseteq I$, we have
\[
hb  \in Jb = ( I \cup \{h_0\})_r b \subseteq (bI \cup \{h_0b\})_r \subseteq I_r = I \,.
\]
This shows that
\[
\mathfrak p = (I \colon h_0) \subseteq (I \colon h),
\]
which, again by the maximality of $\mathfrak p$, implies that $\mathfrak p = (I \colon h)$ is a prime ideal.
\qed ({\bf A1})

\smallskip
\noindent
\textit{Proof of} ({\bf A2}). Let $n \in \N$ and $a_1, \ldots, a_n \in H$ with $a_1 \cdot \ldots \cdot a_n \in I$. Then $a_1 \cdot \ldots \cdot a_n \in J$, and there is a subset $\Omega \subseteq \llb 1,n\rrb$, say $\Omega = \llb 1,m \rrb$, with $m \le \omega (H,J)$ and $a_1 \cdot \ldots \cdot a_m \in J$.
If $a_1 \cdot \ldots \cdot a_m \in I$, then we are done.
Otherwise,
$a_{m+1} \cdot \ldots \cdot a_n \in (I \colon a_1 \cdot \ldots \cdot a_m) = \mathfrak p$, and since $\mathfrak p$ is a prime ideal, there exists $\nu \in \llb m+1,n \rrb$, say $\nu=m+1$, such that $a_{m+1} \in \mathfrak p$. Clearly, this implies that $a_1 \cdot \ldots \cdot a_{m+1} \in I$, and hence $\omega (H,I) \le \omega (H,J)+1$. \qed ({\bf A2})

Let $I \in \mathcal I_r (H)$. Since $H$ is $r$-noetherian, we get from ({\bf A1}) that there is an ascending chain of $r$-ideals $$
I=I_0 \subsetneq I_1 \subsetneq I_2 \subsetneq \ldots \subsetneq  I_n = H
$$
such that $(I_{\nu-1} : h)$ is a prime ideal for all $\nu \in \llb 1,n \rrb$ and $h \in I_\nu \setminus I_{\nu - 1}$. So, we conclude by ({\bf A2}) that
\[
\omega (H,I) \le \omega (H, I_1) +1 \le \omega (H, I_2)+2 \le \ldots \le \omega (H,I_n)+n=n \,.
\]

\smallskip
\ref{it:prop_3.5(2)} Since $H$ is $r$-noetherian, $H$ satisfies the ACC on principal ideals, and hence it is atomic by Lemma \ref{3.1}. Thus, the assertion follows by point \ref{it:prop_3.5(1)} and Proposition \ref{3.2}\ref{it:prop3.2(1)}.
\end{proof}

\smallskip
In the next proposition we will need the $s$-system of a monoid. Indeed, if $H$ is a monoid, then the
$s$-system $s \colon \PPc (H) \to \PPc (H)$, defined by $X_s = XH$ for all $X \subseteq H$, is an ideal system on $H$.

\medskip
\begin{proposition} \label{3.3}
Let $H$ be a monoid such that $H_\red$ is finitely generated. Then $H$ is an $s$-noetherian \BF-monoid, and the following hold:
\begin{enumerate}[label={\rm (\arabic{*})}]

\item\label{it:3.3(1)} $\rho(H) \in \mathbb Q$ and there exists $K \in \mathbb N$ such that $\rho_{k+1}(H) \le \rho_k(H) + K < \infty$ for all $k \in \mathbb N$.

\item\label{it:3.3(2)} The set of distances  $\Delta (H)$ is finite.

\item\label{it:3.3(3)} $\omega (H) < \infty$ and \ $\mathsf t(H) < \infty$.
\end{enumerate}
\end{proposition}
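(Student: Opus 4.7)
The plan is to exploit that $H_\red$ is a finitely generated commutative monoid, which forces $|\mathcal A(H_\red)| = n < \infty$ (the atoms necessarily lie in every generating set), and hence $\mathsf Z (H) = \mathcal F(\mathcal A(H_\red)) \cong \N_0^n$. First I would establish the $s$-noetherian property via Dickson's Lemma: $s$-ideals of $H_\red$ pull back through $\pi$ to $\sim$-saturated ideals of $\N_0^n$, and every ideal of $\N_0^n$ is finitely generated, so ACC transfers to $H_\red$ and hence to $H$. The BF property then follows immediately from Proposition \ref{3.5}\ref{it:prop_3.5(2)}.

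For (2), I would invoke R\'edei's Theorem to conclude that the kernel congruence $\sim$ on $\mathsf Z(H)$, defined by $z \sim z' \Longleftrightarrow \pi(z) = \pi(z')$, is finitely generated; pick generating pairs $(w_1, w_1'), \ldots, (w_r, w_r')$. Any two factorizations $z, z' \in \mathsf Z(a)$ of a common element $a$ can then be connected by a finite chain of elementary substitutions $y + w_j \mapsto y + w_j'$, each step having distance at most $\max(|w_j|, |w_j'|)$. Therefore $\mathsf c (H) \le C := \max_j \max(|w_j|, |w_j'|) < \infty$, and Proposition \ref{3.2}\ref{it:prop3.2(2)} yields that $\Delta(H)$ is finite.

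For (1), I would work inside the fiber product $E = \{(z, z') \in \mathsf Z(H) \times \mathsf Z(H) \colon \pi(z) = \pi(z')\} \subseteq \N_0^{2n}$ and extract a finite set of minimal generators $(z_1, z_1'), \ldots, (z_m, z_m')$ (by a Dickson-type argument adapted to reducedness of $H_\red$, so as to rule out generators with $|z_i| = 0 < |z_i'|$). For any $(z, z') \in E$ expressed as a non-negative integer combination $\sum_i \lambda_i (z_i, z_i')$ of these generators, the mediant inequality gives $|z'|/|z| \le \max_i |z_i'|/|z_i|$, and symmetrically for the reverse ratio. Consequently
\[
\rho(H) \le \max_i \max\bigl(|z_i|/|z_i'|,\, |z_i'|/|z_i|\bigr) \in \QQb,
\]
and this bound is attained on $\mathsf L(\pi(z_j))$ for the extremal index $j$. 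Hence $\rho(H)$ is rational and accepted, and Proposition \ref{prop:accepter_elasticity} then delivers a constant $K$ with $\rho_{k+1}(H) \le \rho_k(H) + K$; finiteness of each $\rho_k(H)$ follows from Proposition \ref{prop:limits}.

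Part (3) is then straightforward: Proposition \ref{3.5}\ref{it:prop_3.5(1)} gives $\omega(H, u_i) < \infty$ for each atom $u_i$, and since there are only finitely many atoms, $\omega(H) = \max_i \omega(H, u_i) < \infty$. Combined with Proposition \ref{3.2}\ref{it:prop3.2(3)} and the finiteness of $\rho_k(H)$ from (1), this gives $\mathsf t (H, u_i) \le \rho_{\omega(H, u_i)}(H) < \infty$ for each atom, whence $\mathsf t (H) < \infty$. The main obstacle I anticipate is the adaptation of the classical cancellative factorization-theoretic arguments to the unit-cancellative setting: establishing that $E$ is finitely generated as a submonoid of $\N_0^{2n}$ requires care, because the naive descending induction on coordinate sum relies on full cancellativity to guarantee $(z - y, z' - y') \in E$ whenever $(y, y') \in E$ with $(y, y') \le (z, z')$, and this step must be rerouted using reducedness of $H_\red$ and the unit-cancellative hypothesis.
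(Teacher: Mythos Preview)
Your approach to the $s$-noetherian/BF preliminaries and to part~(3) matches the paper's. Your part~(2) via R\'edei's theorem (finite generation of the congruence $\sim$ on $\mathsf Z(H)\cong\N_0^n$, hence $\mathsf c(H)<\infty$, hence $\Delta(H)$ finite by Proposition~\ref{3.2}\ref{it:prop3.2(2)}) is a legitimate alternative to the paper's direct Dickson argument on the set $M$ of ``can be lengthened'' exponent vectors; the two routes are comparable in strength.

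The genuine gap is in part~(1). Two of your steps fail in the unit-cancellative (non-cancellative) setting, and the paper itself supplies the counterexamples in Remarks~\ref{important}. First, the monoid of relations $E=\{(z,z')\in\mathsf Z(H)^2:\pi(z)=\pi(z')\}$ need \emph{not} be finitely generated as a submonoid of $\N_0^{2n}$: Dickson gives you finitely many $\preceq$-minimal elements, but the descent $(z,z')\mapsto(z-y,z'-y')$ requires cancellativity to stay inside $E$, and Remarks~\ref{important}(3) exhibits a $2$-generated reduced monoid where $E$ has infinitely many atoms. Second, and more fatally, your conclusion that the maximal ratio $|z_j'|/|z_j|$ is \emph{attained} on some $\mathsf L(\pi(z_j))$---i.e.\ that $H$ has accepted elasticity---is false in general: Remarks~\ref{important}(5) gives a finitely generated unit-cancellative monoid with $\rho(H)=2n$ but $\rho(L)<2n$ for every $L\in\LLc(H)$. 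So you cannot invoke Proposition~\ref{prop:accepter_elasticity} to obtain the bound on $\rho_{k+1}(H)-\rho_k(H)$.

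The paper circumvents both issues by replacing $E$ with the larger set
\[
S=\bigl\{(x,y)\in\mathsf Z(H)^2:\pi(xz)=\pi(yz)\text{ for some }z\in\mathsf Z(H)\bigr\},
\]
which \emph{is} closed under the Dickson-descent step (if $(x,y)=(x_1x_2,y_1y_2)$ with $(x_1,y_1)\in S$ and $(x,y)\in S$, one checks $(x_2,y_2)\in S$ by a suitable choice of witness $z_2$). The supremum $\rho^\ast(H)=\sup\{|x|/|y|:(x,y)\in S^\ast\}$ is then achieved on a finite set of minimal pairs, giving $\rho(H)=\rho^\ast(H)\in\Q$ \emph{without} claiming acceptance; the bound $\rho_{k+1}(H)-\rho_k(H)\le K$ is then established by a direct computation using the extremal pair $(\bar x,\bar y)$ and its witness $\bar z$, not via Proposition~\ref{prop:accepter_elasticity}.
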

\begin{proof}
The monoid $H$ is $s$-noetherian by  \cite[Theorem 3.6]{HK98} and hence it is a BF-monoid by Proposition \ref{3.5}. We are left to verify the assertion on the arithmetical invariants, and to do so we may
assume without restriction that $H$ is reduced and $\mathcal A (H) = \{u_1, \ldots, u_s\}$ is non-empty. We will denote by $\preceq$ the product order induced on $\mathbb N_0^s$ by the usual order of $\mathbb N_0$.

\smallskip
\ref{it:3.3(1)} Let $\pi \colon \mathsf Z (H)  \to H$ be the canonical epimorphism and set $S^* = S \setminus \{(1,1)\}$, where
\[
S = \{(x,y) \in \mathsf Z (H) \times \mathsf Z (H) : \pi (xz) = \pi
(yz) \text{ for some }z \in \mathsf Z (H)\}\,.
\]
Pick $(x,y) \in S^*$ with $x \ne 1$, and suppose for a contradiction that $y = 1$. Then $\pi(y) = 1$, and letting $z \in \mathsf Z (H)$ such that $\pi (xz)=\pi (yz)$ yields $\pi (z) \fixed[0.1]{\text{ }} \pi (x) = \pi (z)$. But this implies, together with unit-cancellativity, that $\pi (x)=1$, and hence $x=1$, a contradiction.

It follows that $|x|\ge 1$ and $|y|\ge 1$ for all $(x,y) \in S^*$, and accordingly we can define
$$
\rho^\ast(H) = \sup \fixed[-0.25]{\text{ }}\bigl\{ |y|^{-1}|x|: (x,y) \in S^*
\bigr\}.
$$
We note that $\rho(H) \le \rho^\ast(H)$, and we want to show that there exists $(\bar x,\bar y) \in S^*$ such that
$\rho^\ast(H) = |\bar y|^{-1}|\bar x|$.
To this end, consider the homomorphism
$$
f: \mathsf Z (H) \times \mathsf Z (H) \to (\N_0^s \times \N_0^s, +),
\left(\prod_{i=1}^s u_i^{m_i}, \prod_{i=1}^s u_i^{n_i}\right) \mapsto \bigl( (m_i)_{i=1}^s, (n_i)_{i=1}^s \bigr).
$$
In fact, $f$ is an isomorphism, and we have by Dickson's Theorem \cite[Theorem 1.5.3]{Ge-HK06a} that the set $f (S^*)$ has only finitely many minimal points relative to the order $\preceq$. Let $T \subseteq S^*$ be the inverse image of the set of these minimal points. It is sufficient to prove that
\[
\frac{|x|}{|y|} \le \max \fixed[-0.75]{\text{ }}\left\{ \frac{|x'|}{|y'|}:
(x',y') \in T \right\} \quad \text{for all} \quad (x,y) \in
S^*\,.
\]
For this, pick $(x,y) \in S^\ast$. We proceed by induction on $|x| + |y|$. If $(x,y) \in T$, then there is
nothing to do. So suppose that  $(x,y) \notin T$.  Then there exists $(x_1, y_1) \in T$ such that $(x,y) = (x_1 x_2,\, y_1
y_2)$ for some $(x_2, y_2) \in \mathsf Z (H) \times \mathsf Z (H)$. We claim that $(x_2, y_2) \in S^*$.

Indeed, let $z, z_1 \in \mathsf Z (H)$ such that $\pi (xz)=\pi (yz)$ and $\pi (x_1z_1)=\pi (y_1z_1)$, and set $z_2= x_1y_1z_1z$. Then $x_2z_2=xz y_1z_1$ and $y_2z_2= yz x_1z_1$,
which implies that $\pi (x_2z_2)=\pi (y_2z_2)$, and hence $(x_2, y_2) \in S^\ast$.

It is thus clear that $|x_j| + |y_j| < |x| + |y|$ for $j \in \{1,2\}$, and  by the induction hypothesis we have that
\[
\frac{|x|}{|y|} = \frac{|x_1|+|x_2|}{|y_1|+|y_2|} \le \max \fixed[-0.75]{\text{ }}\left\{
\frac{|x_1|}{|y_1|},\, \frac{|x_2|}{|y_2|}\right\} \le \max\fixed[-0.75]{\text{ }} \left\{
\frac{|x'|}{|y'|}: (x',y') \in T \right\},
\]
as was desired.

Now, based on the above, let $(\bar x, \bar y) \in T$ such that $\rho^\ast(H) = |\bar y|^{-1} |\bar x|$,
and let $\bar{z} \in \mathsf Z (H)$ such that $\pi (\bar{x} \fixed[0.2]{\text{ }} \bar{z})= \pi (\bar{y} \fixed[0.2]{\text{ }} \bar{z})$. Then, for every $i \in \mathbb N$ we have $\pi (\bar{x}^i \bar{z}) = \pi (\bar{y}^{\fixed[0.1]{\text{ }}i} \bar{z})$, and setting $a_i = \pi (\bar{x}^i \bar{z})$ gives that
\begin{equation}
\label{equ:lower_bound_on_rho(H)}
\rho ( \mathsf L (a_i)) = \frac{\max \mathsf L (a_i)}{\min \mathsf L (a_i)} \ge \frac{|\bar{x}^i \bar{z}|}{|\bar{y}^{\fixed[0.1]{\text{ }}i} \bar{z}|} = \frac{i\fixed[0.1]{\text{ }} |\bar x| + |\bar z|}{i \fixed[0.1]{\text{ }}|\bar y| + |\bar z|}.
\end{equation}
It follows that
\[
\rho (H) \ge \lim_{i \to \infty} \frac{i\fixed[0.2]{\text{ }}|\bar x| + |\bar z|}{i \fixed[0.1]{\text{ }}|\bar y| + |\bar z|} = \frac{|\bar{x}|}{|\bar{y}|} = \rho^\ast(H) \ge \rho (H) \,,
\]
whence $\rho (H) = \rho^\ast(H) = |\bar{y}|^{-1} |\bar{x}| \in \Q$. In particular, $\rho(H)$ is finite, which implies by Proposition \ref{prop:limits} that $\rho_k(H) \le k \rho(H) < \infty$ for all $k \in \mathbb N$.

We are left to show that there exists $K \in \mathbb N$ such that $\rho_{k+1}(H) - \rho_k(H) \le K$ for all $k \in \mathbb N$.
To this end, set $n = |\bar x|$ and $m = |\bar y|$. Since $\pi (\bar{x}^i \bar{z}) = \pi (\bar{y}^{\fixed[0.1]{\text{ }}i} \bar{z})$ and $\rho(H) = |\bar{y}^{\fixed[0.1]{\text{ }}i}|^{-1} |\bar{x}^i|$ for all $i \in \mathbb N$, we may assume that $|\bar{z}| \le m$, otherwise we replace $\bar x$ with ${\bar x}^{|\bar{z}|}$ and $\bar y$ with $\bar y^{|\bar{z}|}$ (we use here that $|\bar x| \ge 1$ and $|\bar y| \ge 1$). Moreover, for every $i \in \mathbb N$ we have
\[
\rho_{im+|z|} (H) \ge i n +|z|=im\rho (H) + |z|,
\]
whence
\[
(i m+|z|)\rho (H) - m (\rho (H)-1) \le (i m+|z|)\rho (H) - |z| (\rho (H)-1) = i m \rho (H) + |z| \le \rho_{i m + |z|} (H),
\]
and thus
\[
(i m+|z|)\rho (H) - \rho_{i m + |z|} (H) \le m (\rho (H)-1) \,.
\]
Now let $k \ge |z|+1$. Then there is an $i \in \N$ such that
\[
(i-1)m+|z| \le k-1 < k \le i m + |z| \,.
\]
Thus we obtain that
\[
\begin{aligned}
\rho_k (H) - \rho_{k-1} (H) & \le \rho_{i m + |z|} (H) - \rho_{(i-1)m+|z|} (H) \\
    & \le (i m + |z|)\rho (H) + m ( \rho (H)-1) - ( (i-1)m+|z|)\rho (H) \\
 & = m ( 2 \rho (H)-1) \,.
\end{aligned}
\]

\smallskip
\ref{it:3.3(2)}  Set
$$
M = \fixed[-0.65]{\text{ }}\left\{(m_1, \ldots, m_s) \in \mathbb N_0^s: \prod_{i=1}^s u_i^{m_i} \text{ has a factorization of length larger than } m_1 + \ldots + m_s\right\}\fixed[-0.25]{\text{ }} \subseteq \mathbb N_0^s\fixed[0.2]{\text{ }}.
$$
By Dickson's theorem \cite[Theorem 1.5.3]{Ge-HK06a}, we have that the set $\Min(M)$ of minimal points of $M$ (relative to the order $\preceq$) is finite.
For each $\boldsymbol m \in \Min (M)$, we thus choose an $\boldsymbol m^* = (m_1^*, \ldots, m_s^*) \in \N_0^s$ such that $|\boldsymbol m| < |\boldsymbol m^*|$ and $\prod_{i=1}^s u_i^{m_i} = \prod_{i=1}^s u_i^{m_i^*}$.
Accordingly, we set
\[
K = \max \fixed[-0.25]{\text{ }} \bigl\{ |\boldsymbol m^*| - |\boldsymbol m| \colon \boldsymbol m \in \Min (M) \bigr\}\fixed[0.15]{\text{ }}.
\]
Clearly, it is enough to show that for every $a \in H$ and every $k \in \mathsf L (a)$ with $k < \max \mathsf L (a)$ there exists $\ell \in \mathsf L (a)$ with $k < \ell \le k+K$.

Let $a = u_1^{k_1} \cdot \ldots \cdot u_s^{k_s} \in H$  with $k = k_1 + \ldots + k_s < \max \mathsf L(a)$.  Then $\boldsymbol k = (k_1, \ldots, k_s) \in M$, and hence there is an $\boldsymbol m = (m_1, \ldots, m_s) \in \Min (M)$ with $\boldsymbol m \le \boldsymbol k$. Then
$$
a  = u_1^{k_1} \cdot \ldots \cdot u_s^{k_s} =
u_1^{m_1} \cdot \ldots \cdot u_s^{m_s} \cdot u_1^{k_1 - m_1}
\cdot \ldots \cdot u_s^{k_s - m_s} =
u_1^{m_1^*} \cdot \ldots \cdot u_s^{m_s^*} \cdot u_1^{k_1 - m_1}
\cdot \ldots \cdot u_s^{k_s - m_s}
$$
has a factorization of length $|\boldsymbol m^*| - |\boldsymbol m| +  k$ with
$k < |\boldsymbol m^*| - |\boldsymbol m| +  k \le k +K$.

\smallskip
\ref{it:3.3(3)}
 Proposition \ref{3.5} implies that  $\omega(H,u) < \infty$ for all $u \in \mathcal A(H)$. It follows that $\omega(H) < \infty$, because $\mathcal A(H)$ is finite. Together with Proposition \ref{3.2}\ref{it:prop3.2(3)} and point \ref{it:3.3(1)} above, this yields that $
\mathsf t (H,u) \le \rho_{\omega(H, u)}(H) \le \rho_{\omega (H)}(H) < \infty$ for every $u \in \mathcal A(H)$, and hence $\mathsf t (H) < \infty$.
\end{proof}

\medskip
\begin{theorem} \label{3.4}
Let $H$ be an atomic monoid.
\begin{enumerate}[label={\rm (\arabic{*})}]
\item\label{it:3.4(1)} Suppose that $\mathsf c (H) < \infty$ and $H$ has accepted elasticity.
      Then the set of distances $\Delta (H)$ is finite and   there is a $K \in \mathbb N_0$ such that  $\rho_{k+1}(H) \le \rho_k(H) + K$ for all $k \in \mathbb N$.

\item\label{it:3.4(2)} If $H$ is cancellative and $\omega (H) < \infty$, then $\mathsf c (H) \le \mathsf t (H) \le \omega (H)^2$,  $\rho (H) \le \omega (H)$, and $\rho_{k+1} (H) \le  \rho_{k}(H) + \max \{1, \omega (H)-1\}$ for all $k \in \N$.
\end{enumerate}
In both cases,   $\LLc (H)$ satisfies the Structure Theorem for Unions.
\end{theorem}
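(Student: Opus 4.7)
The overall plan is to verify, in each of the two settings, two hypotheses that will make Theorem \ref{th:structure_theorem}\ref{it:main(1)} applicable to $\LLc(H)$: that $\Delta(H)$ is finite and that there exists a constant $K \in \NNb$ with $\rho_k(H) < \infty$ and $\rho_{k+1}(H) \le \rho_k(H) + K$ for every $k \in \NNp$. Once these are in hand, the commentary following Theorem \ref{th:structure_theorem} shows that condition \ref{it:main(1)}\ref{it:th:structure_theorem(ii)} holds vacuously (take $M = lqK + 1$), and the Structure Theorem for Unions follows from \ref{it:main(1)}\ref{it:th:structure_theorem(i)}. The degenerate case $\Delta(H) = \emptyset$ is dispatched at the outset: every $L \in \LLc(H)$ is then a singleton, so each $\UUc_k(H) = \{k\}$ is trivially an AAP with bound $0$.

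For part \ref{it:3.4(1)}, the finiteness of $\Delta(H)$ is immediate from Proposition \ref{3.2}\ref{it:prop3.2(2)}, which gives $1 + \sup \Delta(H) \le \mathsf c(H) < \infty$. Accepted elasticity combined with Proposition \ref{prop:limits} supplies $\rho_k(H) \le k\, \rho(H) < \infty$ for every $k$, and the desired $K$ is delivered directly by the ``In addition'' clause of Proposition \ref{prop:accepter_elasticity} applied to $\LLc = \LLc(H)$. So this part amounts to a repackaging of Section \ref{2}.

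The substantive work lies in part \ref{it:3.4(2)}. The central estimate I would prove is $\rho(H) \le \omega(H)$, by induction on $\min \mathsf L(a)$: given a shortest factorization $u_1 \cdot \ldots \cdot u_k$ of $a$ and any factorization $v_1 \cdot \ldots \cdot v_\ell$, the definition of $\omega(H, u_1)$ produces (after reindexing) some $m \le \omega(H)$ with $u_1 \mid v_1 \cdot \ldots \cdot v_m$; by cancellativity $v_1 \cdot \ldots \cdot v_m = u_1 w$ for some $w \in H$, and writing $w$ as a product of $t \ge 0$ atoms yields a factorization of $u_2 \cdot \ldots \cdot u_k$ of length $t + \ell - m$, to which the induction hypothesis applies to give $\ell \le k\, \omega(H)$. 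Running the same extraction once, with $k$ replaced by $k+1$, yields the sharpened consecutive bound $\rho_{k+1}(H) \le \rho_k(H) + \max\{1, \omega(H) - 1\}$: if $m = 1$ then $w$ must be a unit (so $m - t = 1$), while if $m \ge 2$ the equality $v_1 \cdot \ldots \cdot v_m = u_1 w$, with the $v_i$ atoms and $u_1$ a non-unit, forces $t \ge 1$ by cancellativity, hence $m - t \le m - 1 \le \omega(H) - 1$. Next, $\mathsf t(H) \le \omega(H)^2$ combines Proposition \ref{3.2}\ref{it:prop3.2(3)} with $\rho_{\omega(H)}(H) \le \omega(H)\, \rho(H) \le \omega(H)^2$, and $\mathsf c(H) \le \mathsf t(H)$ follows by the standard telescoping argument valid in the cancellative setting: given $z, z' \in \mathsf Z(a)$, pick any atom $u$ dividing $a$, use the tame degree to obtain $z_1 \in \mathsf Z(a) \cap u\, \mathsf Z(H)$ with $\mathsf d(z, z_1) \le \mathsf t(H, u)$, cancel $u$, and iterate on $a/u$. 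Consequently $\Delta(H)$ is finite by Proposition \ref{3.2}\ref{it:prop3.2(2)}, $\rho_k(H) < \infty$ for all $k$, and one may take $K = \max\{1, \omega(H) - 1\}$.

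The step I expect to be the main obstacle is the sharpened consecutive-elasticity bound in part \ref{it:3.4(2)}: a single naive extraction produces only $\rho_{k+1}(H) - \rho_k(H) \le \omega(H)$, and the reduction to $\max\{1, \omega(H) - 1\}$ relies on the $m = 1$ versus $m \ge 2$ dichotomy above, which is the point at which cancellativity (and, in the $m = 1$ case, irreducibility of $u_1$) is used in an essential way. By contrast, part \ref{it:3.4(1)} is bookkeeping once Section \ref{2} is in place, and in both cases the final assertion follows uniformly from Theorem \ref{th:structure_theorem}.
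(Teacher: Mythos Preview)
Your proposal is correct and follows the same architecture as the paper's proof: both parts feed into Theorem \ref{th:structure_theorem}\ref{it:main(1)} via the remark that a uniform bound $\rho_{k+1}-\rho_k\le K$ makes condition \ref{it:th:structure_theorem(ii)} hold trivially. For part \ref{it:3.4(1)} your argument and the paper's are identical (Proposition \ref{3.2}\ref{it:prop3.2(2)} plus Proposition \ref{prop:accepter_elasticity}); for part \ref{it:3.4(2)} the paper simply cites \cite[Propositions 3.5 and 3.6]{Ge-Ka10a}, whereas you unpack those proofs in place --- the $\omega$-extraction/induction for $\rho(H)\le\omega(H)$, the $m=1$ versus $m\ge 2$ dichotomy for the sharpened consecutive bound, Proposition \ref{3.2}\ref{it:prop3.2(3)} combined with $\rho_{\omega(H)}\le\omega(H)^2$ for the tame degree, and the standard cancel-and-iterate for $\mathsf c(H)\le\mathsf t(H)$ --- all of which are exactly the arguments in that reference.
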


\begin{proof}
If the statements on the invariants hold true, then in both cases  Theorem \ref{th:structure_theorem} shows that $\LLc (H)$ satisfies the Structure Theorem for Unions (indeed, Condition \ref{it:main(1)}\ref{it:th:structure_theorem(ii)} of Theorem \ref{th:structure_theorem} holds by the remark following Theorem \ref{th:structure_theorem}).

\ref{it:3.4(1)} We have that $\Delta (H)$ is finite by Proposition \ref{3.2}\ref{it:prop3.2(2)}, while Proposition \ref{prop:accepter_elasticity} implies the existence of a $K \in \mathbb R$ with the required properties.

\ref{it:3.4(2)} The inequalities follow from   \cite[Propositions 3.5 and 3.6]{Ge-Ka10a}.
\end{proof}

\medskip
\begin{theorem} \label{3.6}
Let $H$ be a  monoid such that $H = \mathcal F (P) \times T$, where $P \subseteq H$ is a set of pairwise non-associated primes and $T \subseteq H$ is a submonoid such that $T_{\red}$ is finitely generated.
\begin{enumerate}[label={\rm (\arabic{*})}]
\item The set of distances $\Delta (H)$ and the invariants $\rho (H)$,  $\omega (H)$, and $\mathsf t (H)$ are all finite.
\item $\LLc (H)$ satisfies the Structure Theorem for Unions.
\end{enumerate}
\end{theorem}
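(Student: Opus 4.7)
The plan is to reduce Theorem~\ref{3.6} to Proposition~\ref{3.3} applied to the submonoid $T$, by observing that adjoining primes only shifts sets of lengths. The crucial first step is the identity
$$
\mathscr{L}(H) = \bigl\{\, n + L : n \in \N_0,\ L \in \mathscr{L}(T) \,\bigr\}.
$$
To establish it, I would argue from $H = \mathcal{F}(P) \times T$ that every $a \in H$ is uniquely $a = ft$ with $f \in \mathcal{F}(P)$ and $t \in T$, and that up to associates $\mathcal{A}(H) = P \cup \mathcal{A}(T)$. Since each $p \in P$ is prime and has a unique factorization of length $1$, every factorization of $a$ consists of the unique factorization of $f$ (of length $|f|$) concatenated with a factorization of $t$ in $T$; hence $\mathsf L_H(a) = |f| + \mathsf L_T(t)$.

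From this identity part~(1) is almost immediate. Shifting a subset of $\N_0$ by a non-negative integer preserves its set of distances and cannot increase its elasticity, so $\Delta(H) = \Delta(T)$ and $\rho(H) = \rho(T)$, both finite by Proposition~\ref{3.3}\ref{it:3.3(2)} and \ref{it:3.3(1)}. For $\omega$ and $\mathsf t$, the direct-product structure yields the key divisibility reduction: if $u \in \mathcal{A}(T)$ and $a = ft \in H$, then $u \mid a$ in $H$ (say $a = uy$ with $y = f't' \in \mathcal{F}(P) \times T$) forces, by comparing $\mathcal{F}(P)$-components, that $f' = f$ and $y = t' \in T$, hence $u \mid t$ in $T$. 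This gives $\omega(H, u) = \omega(T, u)$ and $\mathsf t(H, u) \le \mathsf t(T, u)$; combined with the trivial bounds $\omega(H, p) = \mathsf t(H, p) = 1$ for $p \in P$, Proposition~\ref{3.3}\ref{it:3.3(3)} yields $\omega(H), \mathsf t(H) < \infty$.

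For part~(2), I first apply Theorem~\ref{th:structure_theorem} to $\mathscr{L}(T)$. The uniform bound $\rho_{k+1}(T) - \rho_k(T) \le K$ from Proposition~\ref{3.3}\ref{it:3.3(1)} verifies condition~\ref{it:th:structure_theorem(ii)} of Theorem~\ref{th:structure_theorem}\ref{it:main(1)} via the remark following that theorem, so $\mathscr{L}(T)$ satisfies the Structure Theorem for Unions with difference $\delta = \min \Delta(T)$ and some bound $M$. To transfer this to $H$, I use the shift identity to write
$$
\mathscr{U}_k(H) = \bigcup_{n = 0}^{k} \bigl( n + \mathscr{U}_{k-n}(T) \bigr).
$$
Since $n \in \mathscr{U}_n(T)$, the bounds $\lambda_n(T) \le n \le \rho_n(T)$ together with Lemma~\ref{prop:basic(1)}\ref{it:prop:basic(1)(iii)} imply $\lambda_k(H) = \lambda_k(T)$ and $\rho_k(H) = \rho_k(T)$, while Lemma~\ref{lem_FrGe}\ref{it:lemma_FrGe(i)} gives $\mathscr{U}_k(H) \subseteq k + \delta \ZZb$. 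For $k$ large, the set $\mathscr{U}_k(T) \cap \llb \lambda_k(T) + M, \rho_k(T) - M \rrb$ already fills the full progression $(k + \delta \ZZb) \cap \llb \lambda_k(T) + M, \rho_k(T) - M \rrb$, and the sandwich $\mathscr{U}_k(T) \subseteq \mathscr{U}_k(H) \subseteq k + \delta \ZZb$ forces $\mathscr{U}_k(H)$ to be an AAP with the same parameters $\delta$ and $M$.

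The main obstacle is bookkeeping rather than a conceptual difficulty: one must carefully justify the shift identity and the divisibility reduction $u \mid_H a \Leftrightarrow u \mid_T t$ for $u \in \mathcal{A}(T)$ using the direct-product decomposition. Once this is established, both parts follow mechanically from Proposition~\ref{3.3} and the Structure Theorem for~$T$.
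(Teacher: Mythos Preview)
Your proposal is correct and follows essentially the same reduction as the paper: identify $\mathcal A(H)=P\cup\mathcal A(T)$, use the direct-product structure to get $\Delta(H)=\Delta(T)$, $\rho_k(H)=\rho_k(T)$, and $\omega(H)=\omega(T)$, and then invoke Proposition~\ref{3.3}. The only difference is in part~(2): having established $\rho_k(H)=\rho_k(T)$ and $\Delta(H)=\Delta(T)$, the paper applies Theorem~\ref{th:structure_theorem} (via Theorem~\ref{3.4}) \emph{directly to $\mathscr L(H)$}, whereas you first prove the Structure Theorem for $\mathscr L(T)$ and then transfer it to $\mathscr U_k(H)$ by a sandwich argument---a correct but unnecessary detour, since the bound $\rho_{k+1}(H)-\rho_k(H)=\rho_{k+1}(T)-\rho_k(T)\le K$ already puts $\mathscr L(H)$ itself in the scope of the remark after Theorem~\ref{th:structure_theorem}.
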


\begin{proof}
Proposition \ref{3.3}\ref{it:3.3(3)} implies that  $\omega (T) < \infty$.
If $\omega (T)=0$, then $T=T^{\times}$, $H$ is factorial, and the claim is trivial.
Suppose that  $\omega (T) \ge 1$. By definition of a prime, we have $\omega (H,p)=1$ for every $p \in P$.
Since $H$ is a direct product of $\mathcal F (P)$ and $T$, we infer that $\mathcal A (H) = P \cup \mathcal A (T)$ and
\[
\begin{aligned}
\omega (H)&  = \sup \{ \omega (H, u) \colon u \in \mathcal A (H)\}   = \sup \{ \omega (H, u) \colon u \in \mathcal A (T)\}   = \sup \{ \omega (T, u) \colon u \in \mathcal A (T)\} = \omega (T) \,.
\end{aligned}
\]
Furthermore, we obtain that $\Delta (H)=\Delta (T)$,   $\rho (T)= \rho (H)$, and $\rho_k (H)=\rho_k (T)$ for all $k \in \N$. Thus all assertions follow from Theorem \ref{3.4} and from Proposition \ref{3.3}.
\end{proof}

\subsection{Cancellative semigroups.} \label{subsec:3.1}

In this subsection we gather results on cancellative semigroups implying that the Structure Theorem for Unions holds. They  are simple consequences of known results. However, some of them  have never  been formulated explicitly.

\smallskip
{\bf 3.1(a) C-monoids, C-domains, and their generalizations.} For this kind of domains and monoids $H$, it has been shown that one of the following two conditions holds (\cite[Theorem 3.10]{Ga-Ge09b}):
\begin{itemize}
\item[(a)] There exists $k_0 \in \N$ such that $\rho_k (H)=\infty$ for all $k \ge k_0$.
\item[(b)] For every $k \in \N$, $\rho_k (H) < \infty$ and there is a global bound $M \in \N_0$ such that $\rho_{k+1}(H) - \rho_k (H) \le M$.
\end{itemize}
Thus $\LLc (H)$ satisfies the  Structure Theorem for Unions by Theorem \ref{th:structure_theorem}.
We provide one explicit example of $\textrm{C}$-domain and refer to \cite{Re13a, Ge-Ra-Re15c, Ka16b} for more.

Let $R$ be a Mori domain with complete integral closure $\widehat R$. Suppose that $(R  \colon \widehat R) \ne \{0\}$, that the class groups $\mathcal C_v (R)$ and $\mathcal C_v (\widehat R)$ are finite, and that the ring $S^{-1}\widehat R/ S^{-1} (R \colon \widehat R)$ is quasi-artinian, where $S$ is the monoid of regular elements of $R$. Then $R$ satisfies one of the above two conditions (see \cite[Theorems 6.2 and 7.2]{Ka16b}, and note that, by Theorem \ref{3.4},  the finiteness of the tame degree implies Condition (b)).

\smallskip
{\bf 3.1(b) Transfer Krull monoids.} This class includes all commutative Krull domains, but also maximal orders in central simple algebras. We refer to \cite{Ge16c} for a list of examples.  Let $H$ be a transfer Krull monoid. Then $H$ has a weak transfer homomorphism  $\varphi \colon H \to \mathcal B (G_0)$, where $G_0$ is a subset of an abelian group $G$ and $\mathcal B (G_0)$ is the monoid of zero-sum sequences over $G_0$.  Since $\UUc_k (H) = \UUc_k \bigl( \mathcal B (G_0) \bigr)$ for all $k \in \N$,  $\LLc (H)$ satisfies the Structure Theorem for Unions if and only if $\LLc \bigl( \mathcal B (G_0) \bigr)$ satisfies the Structure Theorem for Unions. If the Davenport constant $\mathsf D (G_0) <
 \infty$, then $\omega \bigl( \mathcal B (G_0) \bigr) < \infty$ by \cite[Theorem 3.4.10]{Ge-HK06a} and hence $\LLc \bigl( \mathcal B (G_0) \bigr)$ satisfies the Structure Theorem for Unions by Theorem \ref{3.4}
(there are several natural conditions when the finiteness of $\mathsf D (G_0)$ and of $\omega \bigl( \mathcal B (G_0) \bigr)$ are equivalent, see \cite[Theorem 4.2]{Ge-Ka10a}).
If $G_0=G$, then all sets $\mathscr{U}_k (H)$ are intervals (see \cite[Theorem 3.1.3]{Ge09a} or \cite[Theorem 4.1]{Fr-Ge08}). Recent progress on the invariants $\rho_k \bigl( \mathcal B (G) \bigr)$ can be found in \cite{Fa-Zh16a}.

\bigskip
\subsection{Semigroups of ideals.} \label{subsec:3.2}

In this subsection, all rings and domains are supposed to be commutative.
Let $H$ be a cancellative monoid and $r$ an ideal system on $H$. For an ideal $I \in \mathcal I_r (H)$ we denote by $I^{(k)}$ the $k$-fold product of $I$ with respect to $r$-multiplication. We denote by $\mathcal I_r^* (H) \subseteq \mathcal I_r (H)$ the subsemigroup of $r$-invertible $r$-ideals.
The next lemma deals with strictly $r$-noetherian ideal systems and we recall two examples. First, if $H$ is $s$-noetherian, then $H$ is strictly $s$-noetherian. Second,  let $R$ be a noetherian domain, and let $H = R\setminus \{0\}$ be the monoid of its non-zero elements. Then the $d$-system,
\[
d \colon \PPc (H) \to \PPc (H), \ \text{where} \quad X_d = \langle X \rangle_R \setminus \{0\} \quad \text{ for all } \quad X \subseteq H \,,
\]
consisting of the usual ring ideals, is strictly $d$-noetherian (\cite[Proposition 8.4]{HK98}).
We denote by $\mathcal I_d (R) = \mathcal I (R)$ the semigroup of non-zero integral ideals of $R$ (with usual ideal multiplication) and by $\mathcal I_d^* (R) = \mathcal I^* (R)$ the subsemigroup of invertible ideals.

\smallskip
\begin{lemma} \label{3.7}
Let $H$ be a cancellative monoid and let $r$ be a strictly $r$-noetherian ideal system on $H$.
\begin{enumerate}[label={\rm (\arabic{*})}]
\item\label{it:lem_3.7(1)} $\mathcal I_r (H)$ is a \BF-monoid.

\smallskip
\item\label{it:lem_3.7(2)} $\mathcal I_r^* (H)$ is a $v$-noetherian cancellative \BF-monoid. In particular, if $R$ is a noetherian domain, then the monoid $\mathcal I^* (R)$  is a $v$-noetherian cancellative \BF-monoid.
\end{enumerate}
\end{lemma}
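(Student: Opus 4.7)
The plan is to deduce both parts from Proposition~\ref{3.5} together with standard transfer results for ideal systems. I would begin by verifying that $\mathcal{I}_r(H)$ is a monoid in the paper's sense. Commutativity of $r$-multiplication is inherited from $H$, and $H$ itself is the identity since $I \cdot_r H = (IH)_r = I_r = I$ for every $I \in \mathcal{I}_r(H)$. The subtle point is unit-cancellativity: assuming $I \cdot_r J = I$ for some $I, J \in \mathcal{I}_r(H)$, I need to conclude $J = H$. Using the strictly $r$-noetherian hypothesis, I would write $I = \{a_1, \ldots, a_n\}_r$ as the $r$-closure of a finite subset, so that each $a_i$ lies in $(\{a_1, \ldots, a_n\} J)_r$, and then extract---via a Nakayama/determinant-trick argument reformulated to work purely with the axioms of the $r$-closure operator inside the quotient group $\mathsf{q}(H)$---that $1 \in J$, forcing $J = H$. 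Reworking the classical determinant-trick argument in a purely multiplicative monoid setting is where I expect the main technical difficulty to lie.

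Once unit-cancellativity is available, I would show that $\mathcal{I}_r(H)$ is $s$-noetherian. Given an ascending chain $I_1 \cdot_r \mathcal{I}_r(H) \subseteq I_2 \cdot_r \mathcal{I}_r(H) \subseteq \cdots$ of principal ideals of $\mathcal{I}_r(H)$, the containment $I_n \in I_{n+1} \cdot_r \mathcal{I}_r(H)$ translates into $I_n = I_{n+1} \cdot_r K_n$ for some $K_n \in \mathcal{I}_r(H)$, which in particular forces $I_n \subseteq I_{n+1}$ in $H$; this chain of $r$-ideals of $H$ stabilizes by the $r$-noetherian hypothesis, and unit-cancellativity then upgrades the eventual equality $I_n = I_{n+1}$ to equality of the corresponding principal ideals of $\mathcal{I}_r(H)$. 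Since principal ideals are trivially $s$-ideals, Proposition~\ref{3.5}\ref{it:prop_3.5(2)} applied to the monoid $\mathcal{I}_r(H)$ with its $s$-system yields that $\mathcal{I}_r(H)$ is a BF-monoid.

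For part~(2), cancellativity of $\mathcal{I}_r^*(H)$ is immediate, since it embeds as a submonoid of the group of $r$-invertible fractional $r$-ideals of $\mathsf{q}(H)$. The BF property is then inherited from part~(1): if $I \in \mathcal{I}_r^*(H)$ and $I = J \cdot_r K$ in $\mathcal{I}_r(H)$, then both $J$ and $K$ must themselves be $r$-invertible (since $J \cdot_r (K \cdot_r I^{-1}) = H$), so atoms and factorization lengths of $I$ computed in $\mathcal{I}_r^*(H)$ coincide with those computed in $\mathcal{I}_r(H)$, and finiteness of the latter lifts to the former. For the $v$-noetherian property, I would translate an ascending chain of divisorial ideals of $\mathcal{I}_r^*(H)$ into an ascending chain of $r$-ideals of $H$---by intersecting the members of each divisorial ideal, viewed as fractional $r$-ideals, and clearing denominators---and invoke ACC on $r$-ideals of $H$. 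The particular case of a noetherian domain $R$ finally follows by specializing the general result to $H = R \setminus \{0\}$ and $r = d$, which is strictly $d$-noetherian by \cite[Proposition~8.4]{HK98}.
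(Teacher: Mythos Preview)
Your route to unit-cancellativity in part~\ref{it:lem_3.7(1)} has a real obstacle: the Nakayama/determinant trick rests on writing $a_i = \sum_j c_{ij}a_j$ and then expanding a determinant, both of which require additive structure that a monoid equipped with an abstract $r$-closure simply does not have. Knowing only that $a_i \in (\{a_1,\ldots,a_n\}J)_r$ gives you no algebraic relation to manipulate. The paper instead invokes Krull's Intersection Theorem \cite[Corollary~9.1]{HK98}---this is exactly what the \emph{strictly} $r$-noetherian hypothesis buys---and argues in two lines: $I \cdot_r J = I$ forces $I = I \cdot_r J^{(k)} \subseteq J^{(k)}$ for all $k$, while $J \ne H$ would give $\bigcap_k J^{(k)} = \emptyset$, a contradiction. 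Your second step also has a gap: you claim $\mathcal{I}_r(H)$ is $s$-noetherian but only verify ACC on \emph{principal} $s$-ideals. That yields atomicity via Lemma~\ref{3.1}\ref{it:lemma3.1(1)}, but it is not enough to invoke Proposition~\ref{3.5}, whose proof of~\ref{it:prop_3.5(1)} climbs a chain of arbitrary ideals in the system, not just principal ones.

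For part~\ref{it:lem_3.7(2)}, your divisor-closedness argument transferring BF from $\mathcal{I}_r(H)$ to $\mathcal{I}_r^*(H)$ is correct and pleasant, though the paper simply cites \cite[Example~2.1]{Ge-Ha08a} for both the BF and $v$-noetherian claims. Your sketch for $v$-noetherianness (``intersect the members and clear denominators'') is too vague to assess; the passage from divisorial ideals of the monoid $\mathcal{I}_r^*(H)$ back to $r$-ideals of $H$ is not automatic and would need a genuine argument.
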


\begin{proof}
\ref{it:lem_3.7(1)} First, we  have to show that $\mathcal I_r (H)$ is unit-cancellative. Let $I, J$ be two non-empty $r$-ideals such that $I \cdot_r J = I$. We need to prove that $J=H$. Assume to the contrary that this is not the case. Then Krull's Intersection Theorem \cite[Corollary 9.1]{HK98} implies that $\bigcap_{k \ge 1} J^{(k)} = \emptyset$. But $I = I \cdot_r J$ implies that $
I = I \cdot_r J = I \cdot_r J^{(2)} = \ldots = I \cdot_r J^{(k)}$ for all $k \in \N$, and hence
\[
I \subseteq \bigcap_{k \ge 1} J^{(k)} = \emptyset \,,
\]
a contradiction. Thus $\mathcal I_r (H)$ is a monoid, and it is a \BF-monoid by Proposition \ref{3.5}\ref{it:prop_3.5(2)}.

\smallskip
\ref{it:lem_3.7(2)} Clearly, every $r$-invertible $r$-ideal is $r$-cancellative (\cite[Section 13.1]{Ge-HK06a}), and hence $\mathcal I_r^* (H)$ is a cancellative monoid. The remaining statements follow from \cite[Example 2.1]{Ge-Ha08a}.
\end{proof}

Lemma \ref{3.7} shows that semigroups of ($r$-invertible) $r$-ideals are \BF-monoids which often satisfy additional noetherian properties. The verification of the assumptions needed to enforce the validity of the Structure Theorem for Unions clearly depends on the ideal system $r$ and the class of monoids and domains under consideration. Much is already known for $r$-invertible $r$-ideals which are cancellative. For example, if $R$ is a $v$-noetherian weakly Krull domain with non-zero conductor, then $\LLc \bigl( \mathcal I_v^* (R) \bigr)$  satisfies the Structure Theorem for Unions (use \cite[Theorem 3.7.1]{Ge-HK06a} and Subsection \ref{subsec:3.1}).
To provide an explicit example of not necessarily cancellative semigroups of ideals, we consider one-dimensional noetherian domains. Note that by an irreducible ideal we mean an ideal which is an irreducible element in the  semigroup of ideals under consideration.

\smallskip
\begin{corollary} \label{3.8}~
Let $R$ be a one-dimensional noetherian domain such that its integral closure $\overline R$ is a finitely generated $R$-module, and let $\mathfrak f = (R \DP \overline R)$ and $\mathcal P^*  = \{ \mathfrak p \in \mathfrak X (R) \colon \mathfrak p \supseteq \mathfrak f \}$. If the following condition:
\begin{enumerate}[label={\rm ({\bf C})}]
\item\label{it:condition(C)} For every $\mathfrak p \in \mathcal P^*$, there are only finitely many irreducible primary ideals $\mathfrak q$ with $\sqrt{\mathfrak q} = \mathfrak p$,
\end{enumerate}
holds, then $\LLc \bigl( \mathcal I (R) \bigr)$ satisfies the Structure Theorem for Unions. In particular:
\begin{enumerate}[label={\rm (\arabic{*})}]
\item\label{it:lem_3.8(1)} If $R$ is a Cohen-Kaplansky domain, then Condition \ref{it:condition(C)} holds.

\item\label{it:lem_3.8(2)} If $R$ is an order in a quadratic number field $K$ with discriminant $d_K$ and conductor $\mathfrak f = f \Z$, then Condition \ref{it:condition(C)} is equivalent to each of the following{\rm \,:}
    \begin{enumerate}[label={\rm ({\bf C}\arabic{*})}]
    \item\label{it:condition(C1)} For every $\mathfrak p \in \mathcal P^*$, there are only finitely many invertible irreducible primary ideals $\mathfrak q$ with $\sqrt{\mathfrak q} = \mathfrak p$.

    \item\label{it:condition(C2)} For every $\mathfrak p \in \mathcal P^*$, there  is precisely one prime ideal of $\overline R$ lying above $\mathfrak p$.

    \item\label{it:condition(C3)} For every prime divisor $p$ of $f$ the Legendre symbol $\Big( \frac{d_K}{p} \Bigr) \ne -1$.
    \end{enumerate}
\end{enumerate}
\end{corollary}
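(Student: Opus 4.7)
The plan is to deduce the main implication from Theorem~\ref{3.6} by exhibiting a direct-product decomposition $\mathcal I(R) \cong \mathcal F(P) \times T$ in which $P$ is a set of pairwise non-associated primes of $\mathcal I(R)$ and $T_{\red}$ is finitely generated whenever Condition~\ref{it:condition(C)} holds. First, since $\overline R$ is a finitely generated $R$-module, the conductor $\mathfrak f$ is a non-zero ideal, and hence $\mathcal P^*$ is finite: in a one-dimensional noetherian domain, a non-zero ideal is contained in only finitely many maximal ideals. For every $\mathfrak p \in \mathfrak X(R) \setminus \mathcal P^*$ one has $\mathfrak f R_\mathfrak p = R_\mathfrak p$, whence $R_\mathfrak p = \overline{R}_\mathfrak p$ is a DVR; a standard verification shows that such a $\mathfrak p$ is invertible in $\mathcal I(R)$, is a prime element of $\mathcal I(R)$, and that the only $\mathfrak p$-primary ideals of $R$ are the powers of $\mathfrak p$.

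Standard primary decomposition in a one-dimensional noetherian domain---together with the fact that primary ideals for distinct maximal ideals are comaximal---gives a monoid isomorphism $\mathcal I(R) \cong \bigoplus_{\mathfrak p \in \mathfrak X(R)} \mathcal I_\mathfrak p(R)$, where $\mathcal I_\mathfrak p(R)$ denotes the monoid of $\mathfrak p$-primary ideals of $R$ together with $R$. Setting $P = \mathfrak X(R) \setminus \mathcal P^*$ and $T = \bigoplus_{\mathfrak p \in \mathcal P^*} \mathcal I_\mathfrak p(R)$, we obtain the desired decomposition $\mathcal I(R) \cong \mathcal F(P) \times T$. Since $R$ is noetherian, $\mathcal I(R)$ (and hence $T$) is $s$-noetherian, so atomic by Lemma~\ref{3.1}; the atoms of $T$ are exactly the irreducible primary ideals whose radical belongs to $\mathcal P^*$. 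Under Condition~\ref{it:condition(C)} and the finiteness of $\mathcal P^*$, there are finitely many such atoms, so $T$ is finitely generated; being reduced, $T_{\red} = T$ is finitely generated. Theorem~\ref{3.6} then yields the Structure Theorem for Unions for $\mathcal L(\mathcal I(R))$.

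For part~\ref{it:lem_3.8(1)}, the conclusion relies on the classical structure theorem of Anderson and Mott for Cohen--Kaplansky domains, according to which any such $R$ has only finitely many non-associate irreducible ideals, so that Condition~\ref{it:condition(C)} holds automatically. For part~\ref{it:lem_3.8(2)}, the equivalences follow from the explicit local description of a quadratic order $R = \mathbb{Z} + f\mathcal O_K$ at a prime above $p \mid f$: via completion, counting irreducible (respectively, invertible irreducible) primary ideals above $\mathfrak p \in \mathcal P^*$ reduces to a computation inside an order in $\mathcal O_{K,p}$, whose ideal-theoretic structure is entirely governed by the splitting behaviour of $p$ in $K$ and is therefore encoded by the Legendre symbol $\bigl(\tfrac{d_K}{p}\bigr)$. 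I expect the main technical hurdle to be the equivalence $\ref{it:condition(C)} \Leftrightarrow \ref{it:condition(C1)}$: one must show that the absence of infinitely many non-invertible irreducible primary ideals is already a consequence of the finiteness of the invertible ones above each singular prime, which requires a careful case analysis---using the conductor square $R = \mathbb{Z}_p \times_{\mathbb F_p} (\mathcal O_K)_p$ after localization---according to whether $p$ is inert, ramified, or split in $K$.
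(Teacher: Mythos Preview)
Your proposal is correct and follows essentially the same approach as the paper: both use primary decomposition to write $\mathcal I(R) = \mathcal F(\mathcal P) \times T$ with $\mathcal P = \mathfrak X(R) \setminus \mathcal P^*$, observe that Condition~\ref{it:condition(C)} makes $T$ finitely generated, and then invoke Theorem~\ref{3.6}. For parts~\ref{it:lem_3.8(1)} and~\ref{it:lem_3.8(2)} the paper simply appeals to the Anderson--Mott theorem and to classical results on quadratic orders (citing \cite{Ge-Le90} and \cite{HK13a}) rather than giving a self-contained argument, so your sketch for part~\ref{it:lem_3.8(2)} actually goes a bit beyond what the paper proves.
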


\begin{proof}
Let $\pi \colon \mathfrak X (\overline R) \to \mathfrak X (R)$ be defined by $\pi (\mathfrak P) = \mathfrak P \cap R$.
    By the Primary Decomposition Theorem, every ideal $I \in \mathcal I (R)$ is, up to order, a unique product of primary ideals having pairwise distinct radicals.  The set $\mathcal P = \mathfrak X (R) \setminus \mathcal P^*$ is the set of prime elements of $\mathcal I (R)$, and every ideal $I \in \mathcal I (R)$ can be written uniquely as a product of powers of prime elements and of primary elements with radicals in $\mathcal P^*$.
    Therefore $\mathcal I (R) = \mathcal F (\mathcal P) \times T$ where $T \subseteq \mathcal I (R)$ is the submonoid generated by atoms which are not prime.   Condition \ref{it:condition(C)} states that the monoid $T$ is finitely generated. Thus the assumptions of Theorem  \ref{3.6} hold, and $\LLc \bigl( \mathcal I (R) \bigr)$ satisfies the Structure Theorem for Unions.

It remains to discuss the two special cases.

\smallskip
\ref{it:lem_3.8(1)} By definition, a domain is  a Cohen-Kaplansky domain if it is atomic and has only finitely many non-associated atoms. Suppose $R$ is a Cohen-Kaplansky domain. Then $R$ is a one-dimensional, semilocal noetherian domain, $\overline R$ is a finitely generated $R$-module, and both ideal semigroups $\mathcal I (R)$ and $\mathcal I^* (R)$ are finitely generated by \cite[Theorem 4.3]{An-Mo92}. Thus Condition \ref{it:condition(C)} holds. Note that Cohen-Kaplansky domains are precisely the domains for which $\mathcal I (R)$ is finitely generated.

\smallskip
\ref{it:lem_3.8(2)} Suppose that $R$ is an order in a quadratic number field. The equivalence of the given conditions is classical and can be found in \cite[p. 36]{Ge-Le90} and \cite[Theorem 5.8.8]{HK13a}. Clearly, \ref{it:condition(C2)} states that   $\pi \colon \mathfrak X (\overline R) \to \mathfrak X (R)$ is bijective.
\end{proof}

\bigskip
\subsection{Semigroups of modules.} \label{subsec:3.3}
The study of direct-sum decomposition of  modules is a classical topic of module theory. There is an overwhelming literature investigating for which classes of modules   the Krull-Remak-Schmidt-Azumaya Theorem (stating that direct-sum decomposition is unique) holds, or how badly it fails.

The last decade has seen  a new  semigroup-theoretical approach to the study of direct-sum decomposition. To highlight this approach, we need some notation.
Let $R$ be a (not necessarily commutative) ring and $\mathcal C$ a class of  (left) $R$-modules which is closed under finite direct sums, direct summands, and isomorphisms. This means that whenever $M,M_1$ and $M_2$ are $R$-modules with $M \cong M_1 \oplus M_2$, we have $M \in \mathcal C$ if and only if $M_1, M_2 \in \mathcal C$. Let $\mathcal V (\mathcal C)$ denote a set of representatives of isomorphism classes in $\mathcal C$, and for a module $M$ in $\mathcal C$ let $[M]$ denote the unique element in $\mathcal V (\mathcal C)$ isomorphic to $M$.
Then  $\mathcal V (\mathcal C)$ is an (additive) reduced commutative semigroup, where the operation is defined by $[M]+[N] = [M\oplus N]$, the identity is the zero-module, and the irreducible elements are the isomorphism classes of indecomposable modules. Of course, the semigroup $\mathcal V (\mathcal C)$ carries all the information about the direct-sum behavior of modules in $\mathcal C$.

If $\End_R (M)$ is semilocal for each $M \in \mathcal C$, then $\mathcal V (\mathcal C)$ is a Krull monoid by a result of Facchini \cite[Theorem 3.4]{Fa02}. In this case $\LLc \bigl( \mathcal V (\mathcal C) \bigr)$ can be studied with the theory of Krull monoids and we refer back to Subsection \ref{subsec:3.1}. This pioneering result was the starting point pushing a new strategy to understand the direct-sum behavior of modules by studying the algebraic and arithmetic properties of $\mathcal V(\mathcal C)$ (see \cite{Wi-Wi09a, Fa12a, Ba-Wi13a} for surveys and \cite[p. 315]{Ba-Ge14b} for a detailed program description; in \cite{Di07a} $\omega$-invariants of semigroup of modules are studied in terms of a so-called semi-exchange property).

To begin, direct-sum decompositions can be as wild as factorizations in arbitrary commutative semigroups. Indeed, we know from a result of Facchini and Wiegand \cite[Theorem 2.1]{Fa-Wi04} that every reduced Krull monoid $H$ is isomorphic to a monoid of modules $\mathcal V (\mathcal C)$, where $\mathcal C$ is a class of finitely generated projective modules with semilocal endomorphism rings.
And by results of Bergman and Dicks, for every reduced commutative semigroup $H$ (with order-unit) there exists a class $\mathcal C$ of finitely generated projective  modules over a right and left hereditary $k$-algebra $R$ such that $\mathcal V (\mathcal C)$ is isomorphic to $H$ (\cite[Corollary 5]{Fa06b}).

So far, the focus of research has been on classes $\mathcal C$ of modules for which $\mathcal V (\mathcal C)$ is  cancellative, but other classes of semigroups appear naturally (see   the Seven-Step hierarchy given by Facchini in \cite{Fa06a}).
In order to apply Theorem \ref{3.6} to a monoid of modules $\mathcal V ( \mathcal C)$, we analyze its assumptions in the present setting. Clearly, $\mathcal V (\mathcal C)$ being finitely generated means that, up to isomorphism, there are only finitely many indecomposable $R$-modules, and  unit-cancellativity reads as follows:
\[
\text{If} \ M, N \ \text{are modules from} \ \mathcal C  \ \text{such that} \quad M \cong M \oplus N, \quad \text{then} \quad N=0 \,.
\]
Thus unit-cancellativity states that all modules have to be directly finite (or {\it Dedekind finite}). Clearly, finitely generated modules over commutative rings are directly finite. Furthermore, modules with finite uniform dimension are directly finite, and so are finitely generated projective modules over unit-regular rings. For more information on directly finite modules we refer to \cite{Go79a, Ha-Gu-Ki04a, Br-Cu-Sc11a}.

We summarize our discussion in the following corollary.

\medskip
\begin{corollary} \label{3.9}
Let $R$ be a  ring and $\mathcal C$ be a class of  $R$-modules which is closed under finite direct sums, direct summands, and isomorphisms. Suppose that all modules in $\mathcal C$ are directly finite and that there are, up to isomorphism, only finitely many indecomposable modules in $\mathcal C$. Then $\LLc \bigl( \mathcal V (\mathcal C) \bigr)$ satisfies the Structure Theorem for Unions.
\end{corollary}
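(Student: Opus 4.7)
The plan is to realize the corollary as a straightforward instance of Theorem \ref{3.6} applied to $H = \mathcal{V}(\mathcal{C})$ with $P = \emptyset$ and $T = H$. The very first thing to check is that $\mathcal{V}(\mathcal{C})$ is a monoid in the sense of this paper (commutative, with identity, and unit-cancellative). Commutativity, associativity, and the existence of the identity $[0]$ are built into the definition of $\mathcal{V}(\mathcal{C})$ as an additive semigroup of isomorphism classes under $\oplus$. Reducedness follows since $[M] + [N] = [0]$ forces $M \oplus N = 0$ and hence $M = N = 0$, so $\mathcal{V}(\mathcal{C})^{\times} = \{[0]\}$. Unit-cancellativity is then equivalent to the implication that $[M] + [U] = [M]$ forces $U = 0$, which is precisely the assumption that every module in $\mathcal{C}$ is directly finite. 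This is the one place where the directly-finite hypothesis is used, and it is essentially cost-free.

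Next I would show that $\mathcal{V}(\mathcal{C})_{\red} = \mathcal{V}(\mathcal{C})$ is finitely generated as a commutative monoid. The atoms of $\mathcal{V}(\mathcal{C})$ are exactly the isomorphism classes of indecomposable nonzero modules in $\mathcal{C}$, of which by hypothesis there are only finitely many, say $[I_1], \ldots, [I_n]$. Since $\mathcal{C}$ is closed under finite direct sums, direct summands and isomorphism, every module in $\mathcal{C}$ decomposes (by iteratively splitting off direct summands) as a finite direct sum of indecomposables from the list $I_1, \ldots, I_n$; so $\mathcal{V}(\mathcal{C})$ is generated, as a monoid, by $\{[I_1], \ldots, [I_n]\}$. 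This is the substantive part of the verification, and it is the place I expect the main subtlety to live, namely ensuring that the iterated decomposition into indecomposables terminates for every module in $\mathcal{C}$; this is the standard content of the phrase ``finite representation type'' in this setting and relies implicitly on the finiteness of the list of indecomposables together with direct finiteness (which prevents infinite-length phenomena of the form $M \cong M \oplus N$ with $N \ne 0$).

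With both properties in hand, I apply Theorem \ref{3.6} to $H = \mathcal{V}(\mathcal{C})$: take $P = \emptyset$ (so $\mathcal{F}(P)$ is trivial) and $T = H$. Since $T_{\red} = \mathcal{V}(\mathcal{C})$ is finitely generated by the previous paragraph, the hypotheses of Theorem \ref{3.6} are satisfied, and the conclusion that $\mathscr{L}(\mathcal{V}(\mathcal{C}))$ satisfies the Structure Theorem for Unions follows directly from part (2) of that theorem. As a byproduct, part (1) of Theorem \ref{3.6} yields the finiteness of $\Delta(\mathcal{V}(\mathcal{C}))$, $\rho(\mathcal{V}(\mathcal{C}))$, $\omega(\mathcal{V}(\mathcal{C}))$ and $\mathsf{t}(\mathcal{V}(\mathcal{C}))$, which could be recorded alongside the corollary but is not needed for the stated conclusion.
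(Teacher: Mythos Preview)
Your proposal is correct and follows essentially the same approach as the paper: the paper's proof is literally ``This follows immediately from Theorem~\ref{3.6}'', with the verification that direct finiteness gives unit-cancellativity and that finitely many indecomposables gives finite generation of $\mathcal V(\mathcal C)$ carried out in the discussion immediately preceding the corollary. You spell out these verifications in more detail (and rightly flag the termination of the decomposition into indecomposables as the one nontrivial point), but the route is identical.
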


\begin{proof}
This follows immediately from Theorem \ref{3.6}.
\end{proof}

\bigskip
\subsection{Power monoids.} \label{subsec:3.4}

Given an additively written commutative semigroup $H$, the power semigroup $\mathscr{P}_\fin(H)$ is the semigroup of all finite non-empty subsets of $H$ endowed with the operation of set addition.
The study of sumsets is in the center of interests in arithmetic combinatorics \cite{Gr13a}, but the abstract semigroup theoretical point of view was adopted only recently in   \cite{Fa-Tr17a}.

In general, $\mathscr{P}_\fin(H)$ need not be unit-cancellative. If, for instance, $H$ contains a finite non-trivial subgroup $G$, then $G \in \mathscr{P}_\fin(H)$ and $G + G = G$, but $G$ is not a unit in $\mathscr{P}_\fin(H)$. This  shows that a necessary condition for $\mathscr{P}_\fin(H)$ to be a monoid is that $H$ is torsion-free, and for a commutative, cancellative semigroup it is well known that torsion-freeness is equivalent to being linearly orderable. We recall that a semigroup $H$ is \textit{linearly orderable} if there exists a total order $\preceq$ on the set $H$ such that $x+z \prec y+z$ for all $x, y, z \in H$ with $x \prec y$ (note that linearly orderable semigroups are cancellative).

In general, we cannot expect arithmetical finiteness properties from $\mathscr{P}_\fin(H)$ (to provide an example, if $H = (\N_0,+)$, then the set of distances of $\mathscr{P}_\fin(H)$ is equal to $\N$ by \cite{Fa-Tr17a}). However, finitely generated subsemigroups of $\mathscr{P}_\fin(H)$ do satisfy arithmetical finiteness properties. To see this,
consider a subset $\mathcal{E} \subseteq \mathscr{P}_\fin(H)$. We denote by $H(\mathcal{E})$ the subsemigroup of $\mathscr{P}_\fin(H)$ generated by $\mathcal{E}$. Clearly, if $\mathcal{E}$ is a family of singletons, then $H(\mathcal{E})$ is isomorphic to the subsemigroup of $H$ generated by the elements occurring in the singletons of $\mathcal E$. Moreover, we have the following result.
\medskip
\begin{corollary} \label{3.10}
Let $H$ be a reduced linearly orderable \BF-monoid, and let $\mathcal{E} \subseteq \mathcal P_\fin(H)$ be a finite  subset.   Then $H (\mathcal E)$ is a finitely generated \BF-monoid and  $\LLc \bigl(H(\mathcal{E}) \bigr)$ satisfies the Structure Theorem for Unions.
\end{corollary}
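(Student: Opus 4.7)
The plan is to apply Theorem \ref{3.6} with $P = \emptyset$ and $T = H(\mathcal{E})$, which will simultaneously deliver the Structure Theorem for Unions for $\LLc(H(\mathcal{E}))$ and the \BF-monoid property (the latter being part of the conclusion of Proposition \ref{3.3}, invoked inside the proof of Theorem \ref{3.6}). The task thus reduces to verifying that $H(\mathcal{E})$ is a reduced monoid in the sense of the paper (i.e., that it is unit-cancellative with trivial group of units) and that $H(\mathcal{E})_{\red} = H(\mathcal{E})$ is finitely generated. The latter is immediate from the finiteness of $\mathcal{E}$, so the core step is to establish unit-cancellativity and reducedness.

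The key tool will be a compatible linear order $\preceq$ on $H$, which exists by assumption. The crucial observation is that such an order turns $A \mapsto \min A$ and $A \mapsto \max A$ into monoid homomorphisms $\mathscr{P}_{\fin}(H) \to H$: for all $A, B \in \mathscr{P}_{\fin}(H)$, one has $\min(A+B) = \min A + \min B$ and $\max(A+B) = \max A + \max B$, since the smallest (respectively, largest) element of $A+B$ is achieved only by summing the smallest (respectively, largest) elements of $A$ and $B$. Consequently, if $A + B = A$ in $\mathscr{P}_{\fin}(H)$, then $\min A + \min B = \min A$ and $\max A + \max B = \max A$, and hence $\min B = \max B = 0$ by cancellativity and reducedness of $H$; this forces $B = \{0\}$. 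The same two homomorphisms, applied to any equation of the form $A + B = \{0\}$, show that the only unit of $\mathscr{P}_{\fin}(H)$ is $\{0\}$ itself. Thus $\mathscr{P}_{\fin}(H)$ is a reduced monoid in the paper's sense, and so is its submonoid $H(\mathcal{E})$.

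Having established all of this, Theorem \ref{3.6} (applied with $P = \emptyset$ and $T = H(\mathcal{E})$) delivers both assertions of the corollary at once. I do not foresee any serious obstacle; the only subtle point is the initial verification that $\mathscr{P}_{\fin}(H)$ is a monoid in the paper's sense, for which the linear orderability of $H$ is precisely what is needed (as illustrated by the remark in the excerpt that $\mathscr{P}_{\fin}(H)$ fails to be unit-cancellative as soon as $H$ contains a non-trivial finite subgroup).
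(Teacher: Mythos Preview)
Your proposal is correct and follows essentially the same route as the paper: both arguments reduce to an application of Theorem~\ref{3.6} (with $P = \emptyset$). The paper simply cites \cite{Fa-Tr17a} for the fact that $H(\mathcal{E})$ is a finitely generated \BF-monoid, whereas you supply a self-contained verification of unit-cancellativity and reducedness via the $\min$/$\max$ homomorphisms induced by the linear order; either way, Proposition~\ref{3.3} then yields the \BF-property and Theorem~\ref{3.6} the Structure Theorem for Unions.
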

\begin{proof}
It follows from  \cite{Fa-Tr17a} that $H (\mathcal E)$ is a finitely generated \BF-monoid, and hence Theorem \ref{3.6} implies the validity of the Structure Theorem for Unions.
\end{proof}

\smallskip
The special case of Corollary \ref{3.10}, when $H$ is the monoid of non-negative integers under addition, lies at the heart of the following remark, in which
we highlight a few essential differences between the classical cancellative setting and the broader setting considered in the present work.

\smallskip

\begin{remarks} \label{important}
 Let $H$ be an atomic monoid.

(1)  By \cite[Theorem 3.6]{HK98}, every reduced finitely generated commutative semigroup is $s$-noetherian. However, in contrast to Proposition \ref{3.1} it need not be atomic (see \cite[Theorem 3 and Example 6]{Ro-GS-GG04b}).

\smallskip
(2) If  $H$ is cancellative and $a \in H$ with $|\mathsf Z (a)|\ge 2$, then $2 + \sup \Delta ( \mathsf L (a)) \le \mathsf c (a)$ by \cite[Lemma 1.6.2]{Ge-HK06a}, but in the more general setting of Proposition \ref{3.2}, the inequality $1 + \sup \Delta ( \mathsf L (a)) \le \mathsf c (a)$ can be sharp.

\smallskip
(3) Let $\sim_H = \{ (x,y) \in \mathsf Z (H) \times \mathsf Z (H) \colon \pi (x) = \pi (y) \}$ be the monoid of relations of $H$, and assume first that $H$ is cancellative. Then $\sim_H$ is a fundamental tool for studying the arithmetic of $H$. In particular, it is easy to see that the canonical embedding $j: {\sim_H} \hookrightarrow \mathsf Z (H) \times \mathsf Z (H)$ is a divisor homomorphism. Thus $\sim_H$ is a Krull monoid, and it is finitely generated if so is $H$.

However, if $H$ is not cancellative, things are quite different.
Indeed, suppose that $H$ is reduced and there exist $a,b,c \in H$ with $b \ne c$ and $ab=ac$. Then none of $a$, $b$, and $c$ is a unit,
and hence none of $\mathsf Z(a)$, $\mathsf Z(b)$, and $\mathsf Z(c)$ is empty.
Let $z \in \mathsf Z(ab)$, $z_a \in \mathsf Z(a)$, $z_b \in \mathsf Z(b)$, and $z_c \in \mathsf Z(c)$. Then $(z,z)$ and $(z_a, z_a)$ both belong to $\sim_H$, and $(z,z) = (z_a, z_a)\fixed[0.3]{\text{ }}  (z_b, z_c)$ in $\mathsf Z(H) \times \mathsf Z(H)$. Yet, $(z_b, z_c)$ is not in $\sim_H$, and it follows that
$j$ is not a divisor homomorphism.

Now, for a fixed integer $n \ge 2$ let $H$ be the submonoid of $\PPc_\fin(\mathbb N_0)$ generated by the sets $\llb 0, 1 \rrb $ and $A = \{1\} \cup \bigl(2 \cdot \llb 0, n \rrb \bigr)$. It is seen that $h \fixed[0.1]{\text{ }} \llb 0, 1 \rrb + k \fixed[0.1]{\text{ }} A$ is an interval for all $h \in \mathbb N$ and $k \in \mathbb N_0$. Therefore, we find that $\llb 0, 1 \rrb$ is prime in $H$ and $\mathcal A (H) = \{\llb 0,1 \rrb, A\}$. In particular, $H$ is a finitely generated, reduced, atomic monoid. However, we claim that $\sim_H$ is not finitely generated.

In fact, we will show that $\bigl(\llb 0, 1 \rrb + k A, (2nk+1) \llb 0, 1 \rrb \bigr)$ is an atom of $\sim_H$ for every $k \in \mathbb N_0$. (Since $H$ is written additively, so will be its factorization monoid $\mathsf Z (H)$.)
Suppose for a contradiction that this is not true. Then there must exist $k \in \mathbb N_0$, $h \in \llb 0, k \rrb$, and $\ell \in \llb 0, 2nk+1 \rrb$ with $h+\ell \ge 1$ such that
\[
\bigl(\llb 0, 1 \rrb + k A, (2nk+1) \llb 0, 1 \rrb\bigr) = \bigl(h A, \ell \llb 0, 1 \rrb\bigr) \bigl(\llb 0, 1 \rrb + (k-h)  A, ({2nk+1 - \ell}) \llb 0, 1 \rrb \bigr)
\quad\text{in}\quad
\sim_H.
\]
But this implies that $hA = \llb 0, \ell \rrb$, which is impossible, since $hA$ is not an interval for $h \ne 0$, and on the other hand, $\{0\} = 0A = \llb 0, \ell \rrb$ only for $\ell = 0$ (recall that we are requiring $h+\ell \ge 1$).

\smallskip
(4) Let $P$ be a maximal set of pairwise non-associated prime elements in $H$, and let $T$ be the set of all $a \in H$ for which there does not exist any $p \in P$ such that $p \mid a$. Of course, $T$ is a submonoid of $H$, and by Lemma \ref{3.1} we have $H = \mathcal F (P) \fixed[0.2]{\text{ }} T$. If $H$ is cancellative, then $H$ is isomorphic to the direct product $\mathcal F(P) \times T$. However, this need not be the case if $H$ is not cancellative.

Indeed, assume as in point (3) that $H$ is the submonoid of $\mathscr{P}_\fin(\mathbb N_0)$ generated by $\llb 0, 1 \rrb$ and $A = \{1\} \cup \bigl(2 \cdot \llb 0, n \rrb \bigr)$, where $n$ is a fixed integer $\ge 2$. We have already noted that $H$ is reduced, and $\llb 0, 1 \rrb$ is prime and $A$ is irreducible in $H$. So we necessarily have $P = \fixed[-0.25]{\text{ }}\bigl\{\llb 0, 1 \rrb \bigr\}$ and $T = \{k A: k \in \mathbb N_0\}$, and $H$ is not isomorphic to $\mathcal F(P) \times T$, because $\llb 0, 1 \rrb + A = (2n+1) \fixed[0.1]{\text{ }} \llb 0, 1 \rrb$ in $H$.

\smallskip
(5) It is known that if $H$ is finitely generated and cancellative, then $H$ has accepted elasticity (\cite[Theorem 3.1.4]{Ge-HK06a}). Yet, this need not be true in the non-cancellative setting.

To see why, let $H$ be given as in the second paragraph of point (4), and let $X$ be a non-unit element of $H$. Then $X = h \fixed[0.2]{\text{ }} \llb 0, 1 \rrb + l A$ for some $h, l \in \mathbb N_0$ with $h+l \ge 1$. We seek an explicit description of the set of lengths, $\mathsf L(X)$, of $X$. To start with, note that if $h = 0$, then $\mathsf L(X) = \{l\}$, since $lA \ne jA$ for all $j \in \mathbb N_0$ with $j \ne l$ and, on the other hand, $\llb 0, 1 \rrb$ cannot be a divisor of $lA$ (in $H$), otherwise $lA$ would be an interval. So assume $h \ge 1$, and let $h = 2nq + r$, where $q \in \mathbb N_0$, $r \in \llb 0, 2n-1 \rrb$, and $q+r \ge 1$. Then
$X = \llb 0, 2n(q+l)+r \rrb$, and we look for all pairs $(j,m)$ of non-negative integers such that
$$
\llb 0, 2n(q+l)+r \rrb = j \fixed[0.25]{\text{ }} \llb 0, 1 \rrb + mA.
$$
This condition implies that $2n(q+l)+r = j + m \max A = j + 2mn$, and is actually possible if and only if
$$
j = 2n(q+l-m)+r
\quad\text{for some}\quad
m \in \llb 0, q+l-\varepsilon_r \rrb,
$$
where $\varepsilon_r = 1$ if $r = 0$ and $\varepsilon_r = 0$ otherwise. It follows that
\begin{equation}
\label{equ:closed_form_for_L(X)}
\mathsf L(X) =
\fixed[-0.2]{\text{ }}\bigl\{(2n-1)x+q+l+r: x \in \llb \varepsilon_r, q+l \rrb \bigr\}\fixed[0.1]{\text{ }},
\end{equation}
whence $\min \mathsf L(X) =  q+l + r + (2n-1)\fixed[0.1]{\text{ }}  \varepsilon_r$, $\max \mathsf L(X) = 2n(q+l) + r$, and
$$
\rho(X) = \frac{2n(q+l) + r}{q+l + r + (2n-1)\fixed[0.1]{\text{ }} \varepsilon_r} < 2n = \lim_{q+l \to \infty} \frac{2n(q+l) + r}{q+l + r + (2n-1)\fixed[0.1]{\text{ }}  \varepsilon_r},
$$
where the limit on the right-most side is uniform with respect to the parameter $r$.

Therefore, we conclude that $\rho(H) = 2n \ne \rho(X)$ for every $X \in H$. In particular, $H$ does not have accepted elasticity, although it is finitely generated (in fact, $2$-generated).

With this in hand, we observe that, by Proposition \ref{3.3}\ref{it:3.3(1)}, $\rho_k(H) < \infty$ for all $k \in \mathbb N$. We want to show that $\rho_{k+1}(H) - \rho_k(H) \le 2n$ for all sufficiently large $k \in \mathbb N$.

Indeed, let $k$ be an integer $\ge 2n$, and write $k = (2n-1)t+s$, where $t \in \mathbb N$ and $s \in \llb 0, 2n-2 \rrb$.
Our goal is to compute $\rho_k$. To this end, we seek all triples $(q, l, r) \in \mathbb N_0^3$ with $q+r \ge 1$ and $0 \le r < 2n$ such that $k \in \mathsf L\bigl(\llb 0, 2n(q+l)+r\rrb\bigr)$, which is equivalent by \eqref{equ:closed_form_for_L(X)} to having that
\begin{equation}
\label{equ:condition_for_rhok}
k = (2n-1)x + q + l + r
\quad\text{for some}\quad
x \in \llb \varepsilon_r, q+l \rrb.
\end{equation}
Since we have already seen that $\max \mathsf L\bigl(\llb 0, 2n(q+l)+r\rrb\bigr) = 2n(q+l)+r$, we can just look for the set $\mathcal M$ of those triples $(q, l, r) \in \mathbb N_0^3$ with $q+r \ge 1$ and $0 \le r < 2n$ for which Condition \eqref{equ:condition_for_rhok} is fulfilled and $2n(q+l)+r$ is maximal (note that $\mathcal M$ is non-empty).

Let $(q,l,r) \in \mathcal M$. Then \eqref{equ:condition_for_rhok} is necessarily satisfied with $x = \varepsilon_r$. Otherwise, taking $x^\prime = x - 1$ and $q^\prime = q + 2n-1$ would give that $(q^\prime, l, r) \in \mathbb N_0^3$, $q^\prime + r \ge 1$, and $
x^\prime \in \llb \varepsilon, q^\prime + l \rrb$, but also
$$
2n(q^\prime + l) + r > 2n(q+l) + r
\quad\text{and}\quad
k = (2n-1)x^\prime + q^\prime + l + r,
$$
in contradiction to the maximality of the triple $(q,l,r)$. This shows that $\mathcal M$ is the set of all triples $(q,l,r) \in \mathbb N_0^3$ with $q+r \ge 1$ and $0 \le r < n$ for which $2n(q+l)+r$ is maximal and
\begin{equation}
\label{equ:final_condition_for_rhok}
q + l = (2n-1)(t-\varepsilon_r) + s - r.
\end{equation}
Now we notice that $(2n-1)(t-\varepsilon_r) + s - r \ge 1$ for every $r \in \llb 0, 2n-1 \rrb$, since having assumed $k \ge 2n$ implies that $t \ge 1$, and that $s \ge 1$ for $t = 1$. This means that, however we choose an $r \in \llb 0, 2n - 1 \rrb$, there are $q, l \in \mathbb N_0$ with $q+r \ge 1$ such that \eqref{equ:final_condition_for_rhok} is satisfied, which ultimately yields
$$
\rho_k(H) = \max\bigl\{(2n-1)(t-\varepsilon_r) + s - r: r \in \llb 0, n-1 \rrb \bigr\} = 2n(k-1) + 1.
$$
Putting it all together, we thus conclude that $\rho_{k+1}(H) - \rho_k(H) = 2n$ for every integer $k \ge 2n$.

Incidentally, it seems worth noting that $\omega(H) = 2n+1$: This comes a consequence of what we have already observed in points (3) and (4), namely the fact that $\llb 0,1 \rrb$ is prime and $A$ divides $k \fixed[0.25]{\text{ }} \llb 0,1 \rrb$ for $k \ge 2n+1$. In particular, we obtain from here and the above that $\rho_{k+1}(H) - \rho_k(H) \le \max(1, \omega(H) - 1)$ for all sufficiently large $k \in \mathbb N$, which is consistent with the bound provided in the cancellative setting by Theorem \ref{3.4} and leads us to ask whether the same inequality is actually true for any finitely generated monoid (if not for every monoid).

\smallskip
(6) If $H$ is as in Theorem \ref{3.4}, then the assumptions made in Theorem \ref{th:structure_theorem}\ref{it:main(2)} need not hold, not even in the cancellative case. To be more precise, let $H$ be a cancellative finitely generated monoid. Then $\rho (H) < \infty$ and there is an $L^* \in \mathscr{L} (H)$ such that $\rho (L^*) = \rho (H)$ (see \cite[Theorem 3.1.4]{Ge-HK06a}). However, there need not be an $L^* \in \mathscr{L} (H)$ which is an AP with difference $\min \Delta (H)$ and satisfies $\rho (L^*) = \rho (H)$ (see \cite[Example 3.1]{Fr-Ge08}). For numerical monoids, sufficient conditions implying the existence of such an $L^* \in \mathscr{L} (H)$ are given in \cite[Theorem 6.6]{Bl-Ga-Ge11a}.
\end{remarks}

\medskip
\section{A  semigroup not satisfying the Structure Theorem for Unions} \label{4}
\medskip

In this section we provide the first example of a semigroup  whose system of sets of lengths does not satisfy the Structure Theorem for Unions. Our example is, actually, a  locally tame Krull monoid with finite set of distances. Recall that  globally tame cancellative monoids satisfy  the Structure Theorem for Unions (Theorem \ref{3.4}), as well as the Structure Theorem for Sets of Lengths  (\cite[Theorem 5.1]{Ge-Ka10a}).

\begin{proposition} \label{4.1}
Let $L \subseteq \N_{\ge 2}$ be a finite subset such that $|L| \geq 2$. Then there exists a finitely generated reduced Krull monoid $H$ such that $L \in \LLc (H)$ and the following properties hold{\rm \,:}
\begin{enumerate}[label={\rm (\roman{*})}]
\item\label{4.1(i)} $\max\Delta (H)  = \max\Delta (L)$.
\item\label{4.1(ii)} $\UUc_{\min L} (H) = L$.
\item\label{4.1(iii)} $\rho_{\ell \min L + \nu} (H) = \ell \max L + \nu$ for all $\nu \in \llb 0, \min L - 1 \rrb$ and $\ell \in \N_0$.
\end{enumerate}
\end{proposition}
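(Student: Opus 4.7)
The plan is to construct $H$ explicitly as a normal affine semigroup, written additively as a submonoid of $\Z^N$, and then derive (i)--(iii) from a closed-form description of its sets of lengths. Write $L = \{k_1 < k_2 < \cdots < k_n\}$ with $n = |L| \ge 2$ and set $N = k_1 + k_2 + \cdots + k_n$. Partition the coordinate indices $\{1, \ldots, N\}$ into $n$ consecutive blocks of sizes $k_1, \ldots, k_n$. For $i = 2, \ldots, n$, let $R_i \in \Z^N$ have entry $+1$ in every position of block $1$, entry $-1$ in every position of block $i$, and entry $0$ elsewhere. The sublattice $\Lambda := \langle R_2, \ldots, R_n\rangle$ is primitive and saturated of rank $n - 1$, so the quotient $\pi \colon \Z^N \twoheadrightarrow \Z^N/\Lambda$ has a free abelian target. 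Define $H := \pi(\N_0^N)$, and for $j$ in block $1$ write $u_j := \pi(e_j)$, while for $2 \le i \le n$ and $1 \le j \le k_i$ write $v_{i,j} := \pi(e_{\iota(i,j)})$, where $\iota(i,j)$ denotes the $j$-th index in block $i$. Set $B := u_1 + \cdots + u_{k_1}$; by the defining relations, $B = v_{i,1} + \cdots + v_{i,k_i}$ for every $i \ge 2$, so $B$ inherits an obvious factorization of each length $k_i \in L$.

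I would first verify that $H$ is a finitely generated reduced Krull monoid. Reducedness and finite generation are immediate; the Krull property amounts to the saturation $H = C_H \cap \pi(\Z^N)$ of $H$ in the ambient lattice, which I would prove by an integer lifting argument: given $\tilde x \in \Z^N$ whose fibre $\tilde x + \Lambda$ meets $\R_{\ge 0}^N$, the constraints $\sum_{i=2}^n c_i \ge -\min_{j \in \text{block }1}\tilde x_j$ and $c_i \le \min_{j \in \text{block }i}\tilde x_j$ (for $i \ge 2$) have integer right-hand sides, so an integer solution (for instance, taking each $c_i$ at its upper bound) exists whenever a real one does. A short primitivity check then shows that the $u_j$'s and $v_{i,j}$'s are exactly the atoms of $H$.

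The structural step is a closed-form description of $\mathsf L(a)$ for each $a \in H$. Given a non-negative integer representative $\sum_j \alpha_j e_j + \sum_{i,j} \beta_{i,j} e_{\iota(i,j)}$ of $a$, set $A := \min_j \alpha_j$, $\mathcal B_i := \min_j \beta_{i,j}$, $M := A + \sum_{i=2}^n \mathcal B_i$, and $E := \sum_j \alpha_j + \sum_{i,j}\beta_{i,j} - k_1 A - \sum_{i=2}^n k_i \mathcal B_i$. One checks that $M$ and $E$ are invariants of $a$ (independent of the representative) and that $E \ge 0$; parameterising the fibre $\pi^{-1}(a) \cap \N_0^N$ through integer tuples $(c_2, \ldots, c_n)$ and rewriting the length in terms of $d_i := \mathcal B_i - c_i$ (for $i \ge 2$) and $d_1 := M - \sum_{i \ge 2}d_i$ then yields
\[
\mathsf L(a) \;=\; E + ML \;=\; \bigl\{E + s_1 k_1 + \cdots + s_n k_n \colon s_i \in \N_0,\; s_1 + \cdots + s_n = M\bigr\},
\]
with the convention $0L := \{0\}$. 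Specialising to the representative of $B$ (with $A = 1$, $\mathcal B_i = 0$, $M = 1$, $E = 0$) recovers $\mathsf L(B) = L$, so $L \in \LLc(H)$.

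Properties (i)--(iii) drop out by inspection of this formula. For (ii), $k_1 \in E + ML \subseteq E + \llb Mk_1, Mk_n \rrb$ together with $E \ge 0$ forces $M \le 1$, and the two cases $M = 0$ (giving $\mathsf L(a) = \{k_1\}$) and $M = 1, E = 0$ (giving $\mathsf L(a) = L$) together yield $\UUc_{k_1}(H) = L$. For (iii), the element $\ell B + \nu u_1$ has $M = \ell$ and $E = \nu$, so $\max \mathsf L = \ell k_n + \nu$; conversely, any $a$ with $\ell k_1 + \nu \in \mathsf L(a)$ satisfies $E + Mk_1 \le \ell k_1 + \nu$, which forces $M \le \ell$ (as $0 \le \nu < k_1$) and hence $\max \mathsf L(a) = E + Mk_n \le \ell k_1 + \nu + M(k_n - k_1) \le \ell k_n + \nu$. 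For (i), consecutive elements of $ML$ differ by at most $\max \Delta(L)$ (swap a $k_j$-summand for a $k_{j+1}$-summand), so $\max \Delta(H) = \max \Delta(L)$, attained already at $B$. The main obstacle will be the careful derivation of the identity $\mathsf L(a) = E + ML$ together with the saturation verification; both reduce to explicit integer-programming arguments on $\Lambda$, after which (i)--(iii) follow by direct calculation.
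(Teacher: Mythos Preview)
Your construction and argument are essentially the same as the paper's, recast in additive lattice language: if you relabel the paper's atoms $u_{i,0}, u_{i,1}, \ldots, u_{i,m_i}$ (with $m_i = k_i - 1$ for $i \ge 2$) as the $k_i$ generators of your block $i$, its defining relation $u_{i,0}\,u_{i,1}\cdots u_{i,m_i} = u_{1,1}\cdots u_{1,m_1}$ becomes exactly your $R_i$, so the two monoids are isomorphic, and your closed form $\mathsf L(a) = E + ML$ is a reparametrisation of the paper's formula (their $k(x)$ is your $M$ and $C(x) - k_1 k(x)$ is your $E$). Your saturation and atom checks and the derivations of (i)--(iii) then match the paper's Steps~2, 4, and 6--8.
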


\smallskip
\noindent
\textit{Comment.} Since $L \in \LLc (H)$, it is clear that $\Delta (L) \subseteq \Delta (H)$. Yet, we cannot expect equality in general, as we know from Proposition \ref{prop:delta_sets} that
$\min \Delta (H) = \gcd \Delta (H)$, which does not necessarily hold for $\Delta (L)$.

\smallskip
\begin{proof}
Set $L = \{m_1, m_2+1, \ldots, m_s+1\} \subseteq \N_{\ge 2}$, where $s=|L|\geq 2$, $m_1 \le m_2$, and $m_i < m_{i+1}$ for every $i \in \llb 2, s-1 \rrb$.
Take $
P = \bigl\{u_{i,j}: i \in \llb 1, s \rrb \text{ and } j \in \llb 1, m_i \rrb \bigr\}
$
to
be a finite set with $|P|= m_1 + \ldots + m_s$, and let $F = \mathcal F (P)$ be the free abelian monoid with basis $P$. For each $i \in \llb 2, s \rrb$ we set
\begin{equation}
\label{equ:definition-of-ui0}
u_{i,0} = u_{1,1} \cdot \ldots \cdot u_{1, m_1} ( u_{i,1} \cdot \ldots \cdot u_{i, m_i} )^{-1} \in \mathsf q (F) \fixed[0.35]{\text{ }},
\end{equation}
and we define $H=[\mathcal{A}]\subseteq \mathsf{q}(F)$, where
\[
\mathcal{A} = \fixed[-0.25]{\text{ }}\bigl\{u_{i,j}: i\in \llb 1,s \rrb \text{ and } j\in \llb 1,m_i \rrb \bigr\}\cup \bigl\{u_{i,0}: i\in \llb 2,s \rrb\bigr\}\fixed[0.25]{\text{ }}.
\]
We note that $\mathsf{q}(H) = \mathsf{q}(F)$, and we proceed in a series of steps.

\smallskip
\noindent
{\bf 1. Description of $H$}. To start with, we seek a more explicit description of the monoid $H$.
For this, note first that, if $\alpha_{i,j}\in\Z$ for $i\in \llb 1,s \rrb$, $j\in \llb 1,m_i \rrb$ and $\beta_i\in\Z$ for $i\in \llb 2,s \rrb$, then we have by \eqref{equ:definition-of-ui0} that
\begin{equation}\label{E:Basis}
\prod_{i=1}^s\prod_{j=1}^{m_i}u_{i,j}^{\alpha_{i,j}}\prod_{i=2}^su_{i,0}^{\beta_i}=
\prod_{j=1}^{m_1}u_{1,j}^{\alpha_{1,j}+\sum_{i=2}^s\beta_i}\prod_{i=2}^s\prod_{j=1}^{m_i}u_{i,j}^{\alpha_{i,j}-\beta_i}.
\end{equation}
Fix $x\in\mathsf{q}(F)$. Then there are uniquely determined $\gamma_{i,j}(x)\in\Z$, for $i\in \llb 1,s \rrb$ and $j\in \llb 1,m_i \rrb$, such that
\begin{equation}
\label{equ:generic-element-of-q(F)}
x=\prod_{i=1}^s\prod_{j=1}^{m_i}u_{i,j}^{\gamma_{i,j}(x)}\fixed[0.25]{\text{ }}.
\end{equation}
Now, for each $i\in \llb 1,s \rrb$ we define
\[
\gamma_i(x)=\min\{ \gamma_{i,1}(x),\ldots,  \gamma_{i,m_i}(x)\}
\quad\text{and}\quad
\gamma'_i(x)=\max\{0,-\gamma_i(x)\}
\fixed[0.35]{\text{ }}.
\]
Furthermore, we denote by $\mathsf{Z}_{\mathcal{A}}(x)$ the set of all tuples
$
\bigl((\alpha_{i,j})_{1 \le i \le s, 1 \le j \le m_i}, \beta_2,\ldots,  \beta_s\bigr)\in\N_0^{m_1 +\ldots + m_s+s-1}
$ such that
\[
x=\prod_{i=1}^s\prod_{j=1}^{m_i}u_{i,j}^{\alpha_{i,j}}\prod_{i=2}^su_{i,0}^{\beta_i}\fixed[0.35]{\text{ }}.
\]
Lastly, for every $\alpha= \fixed[-0.25]{\text{ }}\bigl((\alpha_{i,j})_{1 \le i \le s, 1 \le j \le m_i}, \beta_2,\ldots,  \beta_s\bigr) \in \N_0^{m_1 +\ldots + m_s+s-1}$ we let
\[
|\alpha|=\sum_{i=1}^s\sum_{j=1}^{m_i}\alpha_{i,j}+\sum_{i=2}^s\beta_i
\]
be the length of $\alpha$, and we set
\[
\mathsf{L}_{\mathcal{A}}(x)=\fixed[-0.25]{\text{ }} \bigl\{|\alpha| \colon \alpha\in \mathsf{Z}_{\mathcal{A}}(x)\bigr\}\fixed[0.25]{\text{ }}.
\]
By definition, $x\in H$ if and only if $\mathsf{Z}_{\mathcal{A}}(x)\neq \emptyset$.
Our goal is to give a simpler description of $\mathsf{Z}_{\mathcal{A}}(x)$. To this end, let $B(x)$ be the set of all tuples $(\beta_2,\ldots,  \beta_s)\in\N_0^{s-1}$ such that
\[
\sum_{i=2}^s\beta_i\leq \gamma_1(x)
\quad\text{and}\quad
\beta_i\geq -\gamma_i(x) \,\ \text{for each} \,\  i \in \llb 2,s \rrb \fixed[0.35]{\text{ }}.
\]
It is clear that $B(x)\neq \emptyset$ if and only if $
\gamma_1(x)\geq \sum_{i=2}^s \gamma^\prime(x)$, and we will prove that $B(x) \ne \emptyset$ if and only if $\mathsf{Z}_{\mathcal{A}}(x)\neq \emptyset$, whence we can conclude that
\begin{equation}\label{E:H}
H=\left\{x\in \mathsf{q}(F) \colon \gamma_1(x)\geq\sum_{i=2}^s\gamma'_i(x)\right\}.
\end{equation}
To begin, let $\beta=(\beta_2,\ldots,  \beta_s)\in B(x)$, and for every $j\in \llb 1,m_1 \rrb$ define
$\alpha_{1,j}=\gamma_{1,j}(x)-\sum_{i=2}^s \beta_i$. Then
$$
\alpha_{1,j}\geq \gamma_1(x)-\sum_{i=2}^s \beta_i\geq 0 \fixed[0.35]{\text{ }}.
$$
Similarly, for all $i\in \llb 2,s \rrb$ and $j\in \llb 1,m_i \rrb$ set $\alpha_{i,j}=\gamma_{i,j}(x)+\beta_i$. Again, we have $\alpha_{i,j}\geq \gamma_{i}(x)+\beta_i\geq 0$. It follows by \eqref{E:Basis} and  \eqref{equ:generic-element-of-q(F)} that
\[
\mathsf{Z}_x(\beta):=\fixed[-0.25]{\text{ }}\bigl((\alpha_{i,j})_{1 \le i \le s, 1 \le j \le m_i},\beta\bigr) \fixed[-0.25]{\text{ }}\in \mathsf{Z}_{\mathcal{A}}(x).
\]
Consider the map $\mathsf{Z}_x\colon B(x)\rightarrow \mathsf{Z}_{\mathcal{A}}(x)$, $\beta\mapsto \mathsf{Z}_x (\beta)$. This is obviously injective, and we want to show that it is also surjective. Indeed, let $\alpha= \fixed[-0.25]{\text{ }}\bigr((\alpha_{i,j})_{1 \le i \le s, 1 \le j \le m_i},\beta_2,\ldots,  \beta_s\bigl)\fixed[-0.25]{\text{ }}\in\mathsf{Z}_{\mathcal{A}}(x)$. By \eqref{E:Basis} and \eqref{equ:generic-element-of-q(F)}, we have
$$
\alpha_{1,j}=\gamma_{1,j}(x)-\sum_{i=2}^s\beta_i \,\ \text{for each} \,\  j\in \llb 1, m_1 \rrb
$$
and
$$
\alpha_{i,j}=\gamma_{i,j}(x)+\beta_i \,\ \text{for all} \,\  i\in \llb 2,s \rrb \,\ \text{and} \,\ j\in \llb 1,m_i\rrb\fixed[0.35]{\text{ }}.
$$
So, since $\alpha_{i,j}\geq 0$ for every $i\in \llb 1,s \rrb$ and $j\in \llb 1,m_i \rrb$, we get $\beta=(\beta_2,\ldots,  \beta_s)\in B(x)$, and this entails (by construction) that $\mathsf{Z}_x(\beta)=\alpha$.
Therefore, we can conclude that the map
$\mathsf{Z}_x\colon B(x)\rightarrow \mathsf{Z}_{\mathcal{A}}(x)$ is bijective, which, in turn, implies that $B(x)$ is non-empty if and only if so is $\mathsf Z_\mathcal{A}(x)$, as was desired. In addition,
\begin{equation}
\label{equ:rewriting-LA(x)}
{\sf L}_\mathcal{A}(x) = \fixed[-0.25]{\text{ }} \bigl\{\left|\mathsf{Z}_x(\beta)\right|: \beta \in B(x)\bigr\}.
\end{equation}

\smallskip
\noindent
{\bf 2. Claim:}  $H$ is a reduced, finitely generated Krull monoid.

Since $\gamma_i(1)=0$ for all $i\in \llb 1,s \rrb$, we find $B(1)=\{(0,\ldots, 0)\}$. Thus, we also have $\mathsf{Z}_{\mathcal{A}}(1)=\{(0,\ldots,  0)\}$. Now let $x\in H^\times$, and choose $z\in\mathsf{Z}_{\mathcal{A}}(x)$ and $w\in\mathsf{Z}_{\mathcal{A}}(x^{-1})$. Then $z+w\in\mathsf{Z}_{\mathcal{A}}(1)=\{ (0,\ldots,  0)\}$, and hence $0=|z+w|=|z|+|w|$. This is possible only if $z=(0,\ldots,  0)$, and it shows that $x=1$.

On the other hand, $H$ is finitely generated (by definition). Hence, $H$ is Krull if and only if it is root closed (see \cite[Theorem 2.7.14]{Ge-HK06a}). Accordingly, let $x\in \mathsf{q}(H)=\mathsf{q}(F)$ and $t\in\N$ such that $x^t\in H$. Since $\gamma_1(x^t)=t\gamma_1(x)$ and $\gamma'_i(x^t)=t\gamma'_i(x)$,  it follows by \eqref{E:H} that $x\in H$, and we are done.

\smallskip
\noindent
{\bf 3. Sets of lengths}. Let $x\in H$. We set
\[
C(x)=\sum_{i=1}^s\sum_{j=1}^{m_i}\gamma_{i,j}(x)+\sum_{i=2}^s(-m_1+m_i+1) \gamma'_i(x)
\quad\text{and}\quad
k(x)=\gamma_1(x)-\sum_{i=2}^s\gamma'_i(x)\fixed[0.35]{\text{ }}.
\]
We get from \eqref{E:H} that $k(x) \in \mathbb N_0$. We claim that
\begin{equation}\label{E:LSet}
\mathsf{L}_{\mathcal{A}}(x)=C(x)+\left\{\sum_{i=2}^s(-m_1+m_i+1)\beta_i: \beta_2,\ldots, \beta_s\in\N_0 \ \text{and}\  \sum_{i=2}^s\beta_i\leq k(x)\right\}.
\end{equation}
Indeed, we have by the above discussion (see, in particular, Step 1) that
\[
\begin{split}
B(x)&=\left\{(\beta_2,\ldots, \beta_s)\in\N_0^{s-1}: \beta_2\geq \gamma'_2(x), \ldots, \beta_s\geq \gamma'_s(x), \ \text{and}\ \sum_{i=2}^s\beta_i\leq \gamma_1(x)\right\}\\
&=\fixed[-0.25]{\text{ }}\bigl(\gamma'_2(x), \ldots, \gamma'_s(x)\bigr)+\left\{(\beta_2,\ldots, \beta_s)\in\N_0^{s-1}: \sum_{i=2}^{s}\beta_i\leq k(x)\right\}.
\end{split}
\]
Then, let $(\beta_2,\ldots,  \beta_s)\in\N_0^{s-1}$ be such that $\sum_{i=2}^s\beta_i\leq k(x)$, and set $\alpha=\mathsf{Z}_x(\gamma'_2(x)+\beta_2,\ldots,  \gamma'_s(x)+\beta_s)$. Plugging in the definitions, we obtain
\[
\begin{split}
|\alpha|
&= \sum_{j=1}^{m_1}\left(\gamma_{1,j}(x)-\sum_{i=2}^s(\gamma'_i(x)+\beta_i)\right)+
    \sum_{i=2}^s\sum_{j=1}^{m_i}\left(\gamma_{i,j}(x)+\gamma'_i(x)+\beta_i\right)+
    \sum_{i=2}^s(\gamma'_i(x)+\beta_i) \\
&=\sum_{i=1}^s\sum_{j=1}^{m_i}\gamma_{i,j}(x)+\sum_{i=2}^s(-m_1+m_i+1)\gamma'_i(x)
    +\sum_{i=2}^s(-m_1+m_i+1)\beta_i \\
&=C(x)+\sum_{i=2}^s(-m_1+m_i+1)\beta_i\fixed[0.35]{\text{ }}.
\end{split}
\]
Therefore, we obtain from \eqref{equ:rewriting-LA(x)} that $\mathsf{L}_{\mathcal{A}}(x)$ has the form given in \eqref{E:LSet}.

\smallskip
\noindent
{\bf 4. Claim:  $\mathcal{A}=\mathcal{A}(H)$}.

Indeed, since $H$ is reduced (by Step 2) and $H=[\mathcal{A}]$, we have $\mathcal{A}(H)\subseteq \mathcal{A}$. Conversely, let $u\in\mathcal{A}$. We want to show that $\mathsf{L}_{\mathcal{A}}(u)=\{ 1\}$, which  implies that $u\in\mathcal{A}(H)$.

To this end, assume first that $u=u_{k,l}$ for some $k\in \llb 1,s \rrb$ and $l\in \llb 1,m_k \rrb$. Then, $\gamma_i(u)=\gamma_i^\prime(u)=0$ for all $i\in \llb 1,s \rrb$, because $m_i \geq 2$. Thus, $C(u)=1$ and $k(u)=0$, and hence $\mathsf{L}_{\mathcal{A}}(u)=\{1\}$ by \eqref{E:LSet}.

Next, suppose that $u=u_{k,0}$ for some $k\in \llb 2,s \rrb$. Then we have
\[
\gamma_i(u)=\begin{cases}
1 & \text{if } i=1\\
-1 & \text{if } i=k\\
0 & \text{if } i\neq  1,k
\end{cases}\fixed[0.35]{\text{ }}.
\]
Thus, we find that $\gamma'_k(u)=1$, $\gamma'_i(u)=0$ if $i \ne k$, and $k(u)=0$. Consequently, we have by \eqref{E:LSet} that
\[
\mathsf{L}_{\mathcal{A}}(u)=\{ C(u)\}=\{ m_1-m_k+(-m_1+m_k+1)\}=\{ 1\}\fixed[0.35]{\text{ }}.
\]

\smallskip
\noindent
{\bf 5. Minima and maxima of sets of lengths}.
Let $x \in H$.
Since $\mathcal{A}=\mathcal{A}(H)$ by Step 4, we have
\begin{equation}\label{E:LSet2}
\mathsf L_H (x) = \mathsf{L}_{\mathcal{A}}(x)=C(x)+\left\{\sum_{i=2}^s(-m_1+m_i+1)\beta_i \colon \beta_2,\ldots, \beta_s\in\N_0 \ \text{and}\ \sum_{i=2}^s\beta_i\leq k(x)\right\},
\end{equation}
and we assert that
\begin{equation}
\label{equ:min-max}
\min\mathsf{L}_H(x)=C(x), \quad \max\mathsf{L}_H(x)=C(x)+k(x)(-m_1+m_s+1), \quad \text{and} \quad  C(x)\geq k(x)m_1 \fixed[0.35]{\text{ }}.
\end{equation}
Indeed, the first equality follows immediately from \eqref{E:LSet2}. As for the second, it is clear that
\[
C(x)+k(x)(-m_1+m_s+1)\in \mathsf L_H(x)\fixed[0.35]{\text{ }},
\]
because $0+\ldots + 0+k(x)\leq k(x)$. Conversely, if $\beta_2,\ldots, \beta_s\in \N_0$ and $\sum_{i=2}^{s}\beta_i\leq k(x)$, then
\[
C(x)+\sum_{i=2}^{s} (-m_1+m_i+1)\beta_i \leq C(x)+(-m_1+m_s+1)\sum_{i=2}^s\beta_i\leq C(x)+k(x)(-m_1+m_s+1)\fixed[0.35]{\text{ }},
\]
where we have used that $m_1 \le \ldots \le m_s$ (by construction). Therefore, $\max\mathsf{L}_H(x)$ is as stated.

Finally (recall that $\gamma'_i(x)=\max\{0,-\gamma_i(x)\}$, and hence $\gamma_i(x)\geq -\gamma'_i(x)$), we have
\[
\begin{split}
C(x)&= \sum_{i=1}^s\sum_{j=1}^{m_i}\gamma_{i,j}(x)+\sum_{i=2}^s\gamma'_i(x)(-m_1+m_i+1)\geq
\sum_{i=1}^sm_i\gamma_i(x)+\sum_{i=2}^s\gamma'_i(x)(-m_1+m_i+1) \\
&\geq m_1\gamma_1(x)-\sum_{i=2}^s\gamma'_i(x)m_i+\sum_{i=2}^s\gamma'_i(x)(-m_1+m_i+1)=
m_1\gamma_1(x)-(m_1-1)\sum_{i=2}^s \gamma'_i(x) \\
&\geq m_1 \fixed[-0.75]{\text{ }}\left(\gamma_1(x)-\sum_{i=2}^s\gamma'_i(x)\right)\fixed[-0.75]{\text{ }}=m_1k(x)\fixed[0.35]{\text{ }}.
\end{split}
\]

\smallskip
\noindent
{\bf 6. Claim:} $L \in \mathscr{L} (H)$ and $\UUc_{\min L} (H)=L$.

Note that $\min L = m_1$, and set $x=u_{1,1} \cdot \ldots \cdot u_{1,m_1}$. Then $\gamma_1(x)=1$, $\gamma'_i(x)=0$ for each  $i \in \llb 2, s \rrb$, $C(x)=m_1$, and $k(x)=1$. So we have by \eqref{E:LSet2} that
\[
\mathsf{L}_H(x)=m_1+\{0,-m_1+m_2+1,\ldots,  -m_1+m_s+1\}=L\fixed[0.25]{\text{ }}.
\]
Since $m_1 \in \mathsf{L}_H(x)=L$,  we obtain $L\subseteq \mathscr{U}_{m_1}(H)$.

Conversely, let $x\in H$ such that $m_1\in \mathsf{L}_H(x)$. We need to show that $\mathsf{L}_H(x)\subseteq L$. Since $m_1\in\mathsf{L}_H(x)$, we get from \eqref{equ:min-max} and \eqref{E:LSet2} that
\begin{equation*}
\label{equ:uplow-bounds}
k(x)m_1\leq C(x)=\min\mathsf{L}_H(x)\leq m_1\fixed[0.35]{\text{ }}.
\end{equation*}
It follows that $k(x)=0$ or $k(x)=1$, and this concludes the proof of the claim, as we have by \eqref{E:LSet2} and the above that $\mathsf{L}_H(x)=\{C(x)\} = \{m_1\} \subseteq L$ if $k(x) = 0$, and $L_H(x) = L$ otherwise.

\smallskip
\noindent
{\bf 7. Claim: $\max \Delta (H) = \max \Delta (L)$}.

Since $L \in \mathscr{L} (H)$ by the previous step, it is enough to prove that $\max \Delta (H) \le  \max \Delta (L)$.
Let $x\in H$ such that $\Delta(\mathsf{L}_H(x))$ is non-empty, and
let $l<l'$ be consecutive elements of $\mathsf{L}_H(x)$. We have to show that $l'-l\leq \max\Delta(L)$. To this end,
recall \eqref{E:LSet2} and pick $\beta_2,\ldots,\beta_s \in \N_0$ such that $\sum_{i=2}^s\beta_i\leq k(x)$ and $l=C(x)+\sum_{i=2}^s(-m_1+m_i+1)\beta_i$. We distinguish two cases.

\smallskip
\noindent \textsc{Case 1:} $\beta_{i_0}>0$ for some $i_0 \in \llb 2, s-1 \rrb$.

For every $i \in \llb 2, s \rrb$, we define $\beta'=(\beta'_2,\ldots,\beta'_s)$ by
\[
\beta'_i=\begin{cases} \beta_i & \text{if } i\neq  i_0, i_0+1, \\
\beta_{i_0}-1 & \text{if } i=i_0, \\
\beta_{i_0+1}+1 & \text{if } i=i_0+1.
\end{cases}
\]
Then $\beta'\in\N_0^{s-1}$ and $\sum_{i=2}^s\beta'_i=\sum_{i=2}^s\beta_i\leq k(x)$, so we obtain from \eqref{E:LSet2} that
\[
\begin{split}
\mathsf{L}_H(x)\ni l'' & :=C(x)+\sum_{i=2}^s(-m_1+m_i+1)\beta'_i \\
&=C(x)+\sum_{i=2}^s(-m_1+m_i+1)\beta_i-(-m_1+m_{i_0}+1)+(-m_1+m_{i_0+1}+1) \\
&=l+(m_{i_0+1}+1)-(m_{i_0}+1).
\end{split}
\]
Using that $m_i < m_{i+1}$ for every $i \in \llb 2, s-1 \rrb$, it follows that $l<l'\leq l''$, and therefore
\[
l'-l\leq l''-l=(m_{i_0+1}+1)-(m_{i_0}+1)\leq \max\Delta(L)\fixed[0.35]{\text{ }}.
\]

\smallskip
\noindent \textsc{Case 2:} $\beta_i=0$ for each  $i \in \llb 2, s-1 \rrb$.

Then
$l=C(x)+(-m_1+m_s+1)\beta_s$, and we have $\beta_s<k(x)$, since we get from \eqref{equ:min-max} that
\[
l<\max \mathsf{L}_H(x)=C(x)+k(x)(-m_1+m_s+1)\fixed[0.35]{\text{ }}.
\]
It follows that
$1+\beta_s\leq k(x)$, and then we infer from \eqref{E:LSet2} that
\[
\mathsf{L}_H(x)\ni l'':=C(x)+(-m_1+m_2+1)+(-m_1+m_s+1)\beta_s=l+(-m_1+m_2+1)>l\fixed[0.35]{\text{ }}.
\]
So $l<l'\leq l''$ and $l'-l\leq l''-l=(m_2+1)-m_1 \leq \max\Delta(L)$.

\smallskip
\noindent
{\bf 8. Claim:} $\rho_{\ell \min L + \nu} (H) = \ell \max L + \nu$ for each $\nu \in \llb 0, \min L - 1 \rrb$ and all $\ell \in \N_0$.

Clearly, we have $\min L = m_1$ and $\max L = m_s+1$.
Let $\ell\in\N_0$ and $\nu\in \llb 0,m_1-1 \rrb$.  From $u_{s,0}=(u_{1,1} \cdot \ldots \cdot u_{1,m_1})(u_{s,1} \cdot \ldots \cdot u_{s,m_s})^{-1}$ we obtain
\[
(u_{1,1} \cdot \ldots \cdot  u_{1,m_1})^{\ell}(u_{1,1} \cdot \ldots \cdot  u_{1,\nu})=(u_{s,0}u_{s,1} \cdot \ldots \cdot u_{s,m_s})^{\ell}(u_{1,1} \cdot \ldots \cdot u_{1,\nu})\fixed[0.35]{\text{ }}.
\]
Since $\mathcal A(H) = \mathcal A$ by Step 4, we thus find that $\rho_{\ell m_1+\nu}(H) \geq \ell (m_s+1)+\nu$.

Conversely, let $x\in H$ be such that $\ell m_1+\nu\in  \mathsf{L}_H(x)$. It follows from Step 5 that
\[
\ell m_1+\nu \geq \min \mathsf{L}_H(x)=C(x)\geq k(x)m_1\fixed[0.35]{\text{ }},
\]
which implies $k(x)\leq \ell$. Therefore, we have by \eqref{equ:min-max} that
\[
\max \mathsf{L}_H(x)=C(x)+k(x)(-m_1+m_s+1)\leq \ell m_1+\nu+\ell(-m_1+m_s+1)=\ell(m_s+1) +\nu\fixed[0.35]{\text{ }},
\]
which, in turn, implies $\rho_{\ell m_1+\nu}(H) \leq \ell (m_s+1)+\nu$.
\end{proof}

\medskip
\begin{theorem} \label{4.2}
 Let $d \in \N_{\ge 2}$ and let  $(m_k)_{k \ge 0}$ be a sequence of non-negative integers such that  $m_0=0$, $m_1=1$,  and
\begin{equation}
 \sum_{i=1}^{k-2} \left( \frac{k}{i} m_i + (i-1) \right)\fixed[-0.8]{\text{ }}  \le m_{k-1} - (k-1) \quad \text{for each} \quad k \ge 3 \fixed[0.25]{\text{ }}.
\end{equation}
Moreover, let $(U_k)_{k \ge 1}$ be a family of finite subsets of the positive integers with $U_1=\emptyset$ such that, for all $k \ge 2$, $U_k$ is non-empty, $m_{k-1}+1 < \min U_k \le m_{k-1}+d$, $\max U_k = m_k$, and $\max \Delta (U_k)\le d$. Then there exists a locally tame Krull monoid $H$ with finite set of distances $\Delta (H) \subseteq \llb 1,d \fixed[0.25]{\text{ }} \rrb$ and with
\[
\UUc_k (H) \cap \N_{\ge k} =  \llb k, m_{k-1}+1 \rrb \uplus U_k \quad \text{for each} \quad k \in \N\fixed[0.25]{\text{ }}.
\]
In particular, if  $\min U_k + 1 \notin U_k$ for all $k \ge 2$, then there exists no $M \in \N$ such that $\UUc_k (H)$ is an {\rm AAP} with difference $1$ and bound $M$ for all sufficiently large $k$, and hence $\mathscr L (H)$ does not satisfy the Structure Theorem for Unions.
\end{theorem}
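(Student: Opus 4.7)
The plan is to apply Proposition~\ref{4.1} separately to the finite set $L_k:=\llb k,m_{k-1}+1\rrb\uplus U_k$ for each $k\ge 2$, and then assemble the resulting monoids into a single Krull monoid $H$ via the coproduct (direct sum) in the category of reduced Krull monoids. One checks $L_k\subseteq\mathbb N_{\ge 2}$, $|L_k|\ge 2$, $\min L_k=k$, $\max L_k=m_k$ and $\max\Delta(L_k)\le d$ (using $\min U_k\le m_{k-1}+d$ and $\max\Delta(U_k)\le d$), so Proposition~\ref{4.1} supplies a finitely generated reduced Krull monoid $H_k$ with $\UUc_k(H_k)=L_k$, $\max\Delta(H_k)\le d$, and (by Proposition~\ref{4.1}\ref{4.1(iii)}) $\rho_{\ell k+\nu}(H_k)=\ell m_k+\nu$ for all $\nu\in\llb 0,k-1\rrb$ and $\ell\in\mathbb N_0$.

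Set $H:=\bigoplus_{k\ge 2}H_k$, the submonoid of $\prod_{k\ge 2}H_k$ of tuples $(a_k)$ with $a_k=1$ for almost all $k$. Composing the divisor theories $H_k\hookrightarrow F_k$ gives a divisor theory $H\hookrightarrow\bigoplus_kF_k$, so $H$ is a reduced Krull monoid with $\mathcal A(H)=\bigsqcup_k\mathcal A(H_k)$ and $\mathsf L_H((a_k))=\sum_k\mathsf L_{H_k}(a_k)$. Since factorizations do not mix components, every atom $u$ of $H$ lives in a unique $H_k$ and $\mathsf t(H,u)=\mathsf t(H_k,u)<\infty$ by Proposition~\ref{3.3}\ref{it:3.3(3)}; hence $H$ is locally tame. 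The elementary bound $\max\Delta(L_1+L_2)\le\max\{\max\Delta(L_1),\max\Delta(L_2)\}$, valid for finite non-empty $L_1,L_2\subseteq\mathbb N_0$, then yields $\Delta(H)\subseteq\llb 1,d\rrb$.

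The heart of the proof is the identity $\UUc_k(H)\cap\mathbb N_{\ge k}=L_k$ for every $k\ge 1$. The inclusion $\supseteq$ is immediate from the element of $H$ whose $k$-th component is the distinguished element of $H_k$ constructed in Proposition~\ref{4.1}. For the converse, given $a=(a_i)\in H$ with $k,\ell\in\mathsf L_H(a)$ and $\ell\ge k$, set $\lambda_i:=\min\mathsf L_{H_i}(a_i)$ (or $0$ if $a_i$ is a unit); one has $\sum_i\lambda_i\le k$ and $\ell\le\sum_i\rho_{\lambda_i}(H_i)$, and by Proposition~\ref{4.1}\ref{4.1(iii)} $\rho_{\lambda_i}(H_i)-\lambda_i=\lfloor\lambda_i/i\rfloor(m_i-i)$, which vanishes whenever $\lambda_i<i$ and so for all $i>k$. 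If $\lambda_k=k$, then all other components of $a$ are units and $\mathsf L_H(a)\subseteq\UUc_k(H_k)=L_k$. Otherwise only indices $i\in\llb 2,k-1\rrb$ with $\lambda_i\ge i$ contribute to $\ell-k$, and a short case analysis on whether $\lambda_{k-1}\ge k-1$ (so $\lambda_{k-1}\in\{k-1,k\}$, every other active $\lambda_i$ equals $1<i$, and the single contribution is $m_{k-1}-(k-1)$) or $\lambda_{k-1}<k-1$ (so the growth hypothesis $\sum_{i=1}^{k-2}(km_i/i+(i-1))\le m_{k-1}-(k-1)$ bounds $\sum_{i=2}^{k-2}\lfloor\lambda_i/i\rfloor(m_i-i)$ via $\lambda_i\le k$ and $\lfloor\lambda_i/i\rfloor(m_i-i)\le km_i/i$) shows $\ell-k\le m_{k-1}-(k-1)$, whence $\ell\in\llb k,m_{k-1}+1\rrb\subseteq L_k$.

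The main technical hurdle is this last step, where the growth condition on $(m_k)$ is calibrated precisely so that every mixed contribution from several $H_i$ stays inside $\llb k,m_{k-1}+1\rrb$. Once $\UUc_k(H)\cap\mathbb N_{\ge k}=L_k$ has been established, the final assertion follows easily: assuming $\min U_k+1\notin U_k$, fix any $M\in\mathbb N_0$; for $k$ sufficiently large one has $\min U_k\in\bigl\llb\inf\UUc_k(H)+M,\sup\UUc_k(H)-M\bigr\rrb$ (since $\inf\UUc_k(H)\le k$, $\sup\UUc_k(H)\ge m_k$, and $m_k-m_{k-1}\to\infty$ by the growth condition), while $\min U_k\in\UUc_k(H)$ but $\min U_k+1\notin\UUc_k(H)$ (because $\min U_k+1>m_{k-1}+1$ and $\min U_k+1\notin U_k$). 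Hence the middle of $\UUc_k(H)$ is not an arithmetical progression with difference $1$ for large $k$, contradicting any AAP-with-difference-$1$-and-bound-$M$ description and establishing the failure of the Structure Theorem for Unions.
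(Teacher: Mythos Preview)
Your proof is correct and follows essentially the same strategy as the paper: apply Proposition~\ref{4.1} to each $L_k$, form the coproduct $H=\coprod_{k}H_k$, and verify $\UUc_k(H)\cap\N_{\ge k}=L_k$ using the growth hypothesis on $(m_k)$. The only real organizational difference is in this last step: the paper first proves the intermediate identity $\rho_k(H_1\times\cdots\times H_{k-1})=1+m_{k-1}$ (their assertion ({\bf A2})) and then deduces the result via a short chain ({\bf A1})--({\bf A4}), whereas you argue directly on an element $a=(a_i)$ by tracking $\lambda_i=\min\mathsf L_{H_i}(a_i)$ and bounding $\ell-k\le\sum_i\lfloor\lambda_i/i\rfloor(m_i-i)$ term by term. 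Your omission of the factorial factor $H_1=(\N_0,+)$ is harmless, and your direct verifications of local tameness (via $\mathsf t(H,u)=\mathsf t(H_k,u)$) and of $\Delta(H)\subseteq\llb 1,d\rrb$ (via the elementary sumset bound $\max\Delta(L_1+L_2)\le\max\{\max\Delta(L_1),\max\Delta(L_2)\}$) replace the paper's citations to \cite[Propositions~1.4.5 and~1.6.8]{Ge-HK06a}.
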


\smallskip
\noindent
\textit{Comment.} Of course, we can combine the present result, showing the existence of a Krull monoid $H$ with prescribed properties for $\LLc (H)$, with Claborn's Realization Theorem for class groups (\cite[Theorem 3.7.8]{Ge-HK06a}) or with one of the realization theorems for monoids of modules (mentioned in Subsection \ref{subsec:3.3}). Then we obtain a Dedekind domain $R$, respectively a monoid of modules $\mathcal V ( \mathcal C)$,  such that $\LLc (R)$, respectively $\LLc \bigl( \mathcal V (\mathcal C) \bigr)$, has the prescribed properties.

\begin{proof}
The ``In particular'' part is a simple consequence of the main statement.

As for the rest, let $d$, $(m_k)_{k \ge 0}$, and $(U_k)_{k \ge 1}$ be given with the above properties. For each $k \in \N$ we set
\begin{equation*}
\label{equ:defining-Lk}
L_k =  \llb k, m_{k-1}+1 \rrb \uplus U_k \fixed[0.35]{\text{ }},
\end{equation*}
and we note that $L_1 = \{1\}$. In addition, we set $H_1 = (\N_0, +)$, and for every $k \ge 2$ we choose
a finitely generated reduced Krull monoid $H_k$ with $L_k \in \LLc (H_k)$ that satisfies all the conditions specified by points \ref{4.1(i)}-\ref{4.1(iii)} of Proposition \ref{4.1}. We claim that the coproduct
\[
H = \coprod_{k \ge 1} H_k
\]
has all the required properties. Because finitely generated monoids are locally tame (\cite[Theorem 3.1.4]{Ge-HK06a}) and the coproduct of locally tame Krull monoids is a locally tame Krull monoid (\cite[Proposition 1.6.8]{Ge-HK06a}), the monoid $H$ is a locally tame Krull monoid. Furthermore, since $\sup \Delta (H) = \sup \{\sup \Delta  (H_k) : k \in \N \}$ by \cite[Proposition 1.4.5]{Ge-HK06a}, it follows that $\Delta  (H) \subseteq \llb 1,  d \fixed[0.25]{\text{ }} \rrb$.
So we are left with the statement on the unions $\UUc_k (H)$.
For this, we will prove the following four assertions, the last of which implies the theorem.

\begin{enumerate}
\item[({\bf A1})\,] For all $k \in \N$, we have $\UUc_k (H_k) = L_k$ and $\UUc_{\nu} (H_k) = \{\nu\}$ for $\nu \in \llb 1, k-1 \rrb$.

\smallskip

\item[({\bf A2})\,] $\rho_k (H_1 \times \ldots \times H_{k-1}) = 1 + m_{k-1}$ for each $k \in \N_{\ge 2}$.

\smallskip

\item[({\bf A3})\,] $\UUc_k (H_1 \times \ldots \times H_k) \cap \N_{\ge k} = L_k$ for every $k \in \N$.

\smallskip

\item[({\bf A4})\,] $\UUc_k (H) \cap \N_{\ge k} =L_k$ for all $k \in \N$.
\end{enumerate}

\smallskip
To start with, we recall that, for every atomic monoid $S$ with $S \ne S^{\times}$, we have $\UUc_1 (S) = \{1\}$,  $\UUc_0 (S) = \{0\}$, and $\rho_0 (S)=0$. Note that, by construction, the monoids $H_k$  are reduced, and hence $H_k \ne H_k^{\times} = \{1\}$, for all $k \in \N$.

\smallskip
\textit{Proof of \textup{(}{\bf A1}\textup{)}}.\, This is straightforward from Conditions \ref{4.1(ii)} and \ref{4.1(iii)} of Proposition \ref{4.1} (and the fact that, by construction, $\min L_k = k$). \qed ({\bf A1})

\smallskip
\textit{Proof of \textup{(}{\bf A2}\textup{)}}.\, The assertion holds for $k=2$, because $\rho_2 (H_1)=2$ and $m_1 = 1$. So, let $k \ge 3$ and set $S = H_1 \times \ldots \times H_{k-2}$. We want to show that
\[
\rho_k (S \times H_{k-1}) = \max \fixed[-0.25]{\text{ }}\bigl\{ \rho_{\nu} (S) + \rho_{k-\nu}(H_{k-1}) : \nu \in \llb 0,k \rrb \bigr\} = 1 + m_{k-1} .
\]
To begin, we have by ({\bf A1}) and Condition \ref{4.1(iii)} of Proposition \ref{4.1} that
$$
\rho_0 (S) + \rho_k (H_{k-1}) = 1 + m_{k-1} = \rho_1 (S) + \rho_{k-1}(H_{k-1})
$$
and $\rho_{k-\nu} (H_{k-1}) =k-\nu$ for every $\nu \in \llb 2,k \rrb$. Consequently, it is enough to prove that
\[
\rho_{\nu} (S) \le m_{k-1}-(k-1)+\nu \quad \text{for each} \quad \nu \in \llb 2,k \rrb.
\]
To this end, let $i \in \llb 1, k-2 \rrb$ and write $k=\ell i + \nu$, where $\ell \in \N_0$ and $\nu \in \llb 0,i-1 \rrb$. Then, appealing again to Condition \ref{4.1(iii)} of Proposition \ref{4.1}, we find that
\[
\rho_k (H_i) = \rho_{\ell i + \nu} (H_i) = \ell m_i + \nu \le \frac{k}{i} m_i + (i-1) .
\]
For each $\nu \in \llb 2,k \rrb$, we thus obtain that
\[
\pushQED{\qed}
\rho_{\nu}(S)\le \rho_k (S) \le \sum_{i=1}^{k-2} \rho_k (H_i) \le \sum_{i=1}^{k-2} \left( \frac{k}{i} m_i + (i-1) \right) \fixed[-0.75]{\text{ }}\le m_{k-1} - (k-1) . \qedhere ({\bf A2})
\popQED
\]

\smallskip
\textit{Proof of \textup{(}{\bf A3}\textup{)}}.\, The statement holds for $k=1$. So, assume $k \ge 2$ and set $S = H_1 \times \ldots \times H_{k-1}$. Then
\[
\begin{aligned}
\UUc_k (S \times H_k) \cap \N_{\ge k} & = \bigcup_{\nu=0}^k \big( \UUc_{\nu} (S) + \UUc_{k-\nu}(H_k) \big) \cap \N_{\ge k} \\
 & = \big( \UUc_k (H_k) \cap \N_{\ge k} \big) \cup \left(  \bigcup_{\nu=1}^k \big( \UUc_{\nu} (S) + \UUc_{k-\nu}(H_k) \big) \cap \N_{\ge k}  \right) \\
 & = \big( \UUc_k (H_k) \cap \N_{\ge k} \big) \cup \left(  \bigcup_{\nu=1}^k \big( (k-\nu) + \UUc_{\nu} (S)  \big) \cap \N_{\ge k}  \right)\fixed[-0.35]{\text{ }}.
 \end{aligned}
\]
Since for each $\nu\in \llb 1,k \rrb$ we have $(k-\nu) + \UUc_{\nu} (S) \subseteq \UUc_k (S)$,
we see that
\[
\begin{aligned}
L_k \subseteq \UUc_k (S \times H_k) \cap \N_{\ge k} & \subseteq \bigl( \UUc_k (H_k) \cap \N_{\ge k} \bigr) \cup \bigl( \UUc_k (S) \cap \N_{\ge k} \bigr) = L_k \cup \bigl( \UUc_k (S) \cap \N_{\ge k} \bigr) = L_k \,,
\end{aligned}
\]
where the last equality follows from ({\bf A2}).
\qed ({\bf A3})

\smallskip
\textit{Proof of \textup{(}{\bf A4}\textup{)}}.\, The statement is clear for $k=1$. So, suppose that $k \ge 2$, and set $S_1 = H_1 \times \ldots \times H_k$ and $S_2 = \coprod_{\nu \ge k+1} H_{\nu}$, whence $H = S_1 \times S_2$. If $\nu \in \llb 0,k \rrb$, then ({\bf A1}) implies $\UUc_{\nu} (S_2) = \{\nu\}$. Therefore,
\[
\begin{aligned}
\UUc_k (S_1 \times S_2) \cap \N_{\ge k} & = \bigcup_{\nu=0}^k \big( \UUc_{k-\nu} (S_1) + \UUc_{\nu} (S_2) \big) \cap \N_{\ge k} \\
 & = \bigcup_{\nu=0}^k \big( \nu + \UUc_{k-\nu} (S_1)  \big) \cap \N_{\ge k} \\
 & = \bigl( \UUc_k (S_1) \cap \N_{\ge k} \bigr) \cup \left( \bigcup_{\nu=1}^k \big( \nu + \UUc_{k-\nu} (S_1)  \big) \cap \N_{\ge k} \right)\fixed[-0.35]{\text{ }},
\end{aligned}
\]
which, together with Lemma \ref{prop:basic(1)}\ref{it:prop:basic(1)(ii)}, shows that, for every $\nu \in \llb 1,k \rrb$,
\[
\nu + \UUc_{k-\nu}(S_1) \subseteq \UUc_{\nu} (S_1) + \UUc_{k-\nu} (S_1) \subseteq \UUc_k (S_1) \,.
\]
So, putting it all together, we get from ({\bf A3}) that
\[
\pushQED{\qed}
\UUc_k (S_1 \times S_2) \cap \N_{\ge k} = \UUc_k (S_1) \cap \N_{\ge k} = L_k. \qedhere ({\bf A4})
\popQED
\]
This completes the proof of the theorem.
\end{proof}

\providecommand{\bysame}{\leavevmode\hbox to3em{\hrulefill}\thinspace}
\providecommand{\MR}{\relax\ifhmode\unskip\space\fi MR }
\providecommand{\MRhref}[2]{%
  \href{http://www.ams.org/mathscinet-getitem?mr=#1}{#2}
}
\providecommand{\href}[2]{#2}

\end{document}